\documentclass[reqno]{amsart}
\usepackage{amsmath, amsthm, amssymb, color}
\usepackage{graphicx}
\usepackage[mathscr]{euscript}
\usepackage{latexsym}
\usepackage{hyperref}
\hypersetup{pdfstartview={XYZ null null 1.00}} \linespread{1.2}
\usepackage{xcolor}
\usepackage{amsmath,amssymb,subeqnarray,amsfonts,graphics,epsfig,comment,color}
\usepackage{subfigure,txfonts}
\usepackage{latexsym}
 \usepackage{mathptmx}      
\usepackage{color}
\textwidth=14.8cm \textheight=21.7cm \topmargin=0.0cm
\oddsidemargin=1.0cm \evensidemargin=1.0cm
\usepackage[numbers,sort&compress]{natbib}
\numberwithin{equation}{section}
\newtheorem{Theorem}{Theorem}[section]
\newtheorem{Lemma}[Theorem]{Lemma}

\theoremstyle{definition}

\newtheorem{remark}[Theorem]{Remark}
\newtheorem{Proposition}[Theorem]{Proposition}

\providecommand{\norm}[1]{\left\Vert#1\right\Vert}

\newcounter{RomanNumber}

\def\be{\begin{equation}}
\def\en{\end{equation}}
\def\bs{\begin{split}}
\def\es{\end{split}}

\allowdisplaybreaks
\bibliographystyle{plain}

\title[Global existence and decay rates for a generic non--conservative compressible two--fluid model]
{Global existence and decay rates for a generic compressible
two--fluid model}
\author{Yin Li}
\address{Yin Li \newline Faculty of Education, Shaoguan University,
512005, Shaoguan, P. R. China.} \email{liyin2009521@163.com}
\author{Huaqiao Wang}
\address{Huaqiao Wang \newline College of Mathematics and Statistics, Chongqing University,
Chongqing 401331, China.} \email{wanghuaqiao@cqu.edu.cn}
\author{Guochun Wu}
\address{Guochun Wu \newline Fujian Province University Key Laboratory of Computational Science, School of Mathematical Sciences, Huaqiao University, Quanzhou 362021, P.R. China.}
\email{guochunwu@126.com}
\author{Yinghui Zhang*}
\address{Yinghui Zhang \newline Center for Applied Mathematics of Guangxi, Guangxi Normal University, Guilin, Guangxi 541004, P.R.
China} \email{yinghuizhang@mailbox.gxnu.edu.cn}

\subjclass[2010]{76T10;\, 76N10.}
\thanks{* Corresponding author: yinghuizhang@mailbox.gxnu.edu.cn}
\keywords{Non-conservative two--phase fluid model;\, optimal decay
rates;\, compressible.}\bigbreak

\date{\today}
\usepackage{hyperref}

\begin{document}
\begin{abstract}
We investigate global existence and optimal decay rates of a generic
non-conservative compressible two--fluid model with general constant
viscosities and capillary coefficients. Bresch, et al. in the
seminal work (Arch Rational Mech Anal 196:599--629, 2010) considered
the compressible two--fluid model with a special type  of
density--dependent viscosities ($\mu^\pm(\rho^\pm)=\mu^\pm
\rho^\pm,~~\lambda^\pm=0$). However, as indicated by themselves,
their methods cannot deal with the case of constant viscosity
coefficients. Besides, Cui, et al. (SIAM J Math Anal 48:470--512,
2016) studied the same model with a more special type of
density-dependent viscosity
$(\mu^\pm(\rho^\pm)=\nu\rho^\pm,~~\lambda^\pm=0$) and equal
capillary coefficients ($\sigma^+=\sigma^-=\sigma$). Since their
analysis relies heavily this special choice for viscosities and
capillary coefficients, the case of general constant viscosities and
capillary coefficients cannot be handled in their settings. The main
novelty of this work is three--fold: First, for any integer
$\ell\geq3$, we show that the densities and velocities converge to
their corresponding equilibrium states at the $L^2$ rate
$(1+t)^{-\frac{3}{4}}$, and the $k$($\in [1, \ell]$)--order spatial
derivatives of them converge to zero at the $L^2$ rate
$(1+t)^{-\frac{3}{4}-\frac{k}{2}}$, which are the same as ones of
the compressible Navier--Stokes system, Navier--Stokes--Korteweg
system and heat equation. Second, the linear combination of the
fraction densities ($\beta^+\alpha^+\rho^++\beta^-\alpha^-\rho^-$)
converges to its corresponding equilibrium state at the $L^2$ rate
$(1+t)^{-\frac{3}{4}}$, and its $k$($\in [1, \ell]$)--order spatial
derivative converges to zero at the $L^2$ rate
$(1+t)^{-\frac{3}{4}-\frac{k}{2}}$, but the fraction densities
($\alpha^\pm\rho^\pm$) themselves converge to their corresponding
equilibrium states at the $L^2$ rate $(1+t)^{-\frac{1}{4}}$, and the
$k$($\in [1, \ell]$)--order spatial derivatives of them converge to
zero at the $L^2$ rate $(1+t)^{-\frac{1}{4}-\frac{k}{2}}$, which are
slower than ones of their linear combination
($\beta^+\alpha^+\rho^++\beta^-\alpha^-\rho^-$) and the densities.
We think that this phenomenon should owe to the special structure of
the system. Finally, for well--chosen initial data, we also prove
the lower bounds on the decay rates, which are the same as those of
the upper decay rates. Therefore, these decay rates are optimal for
the compressible two--fluid model.

\end{abstract}

\maketitle

\section{\leftline {\bf{Introduction.}}}
\setcounter{equation}{0}
\subsection{Background and motivation}
As is well--known, multi--fluid flows are very common in nature.
Such a terminology includes the flows of non--miscible fluids such
as air and water; gas, oil and water. For the flows of miscible
fluids, they usually form a ``new" single fluid possessing its own
rheological properties. One interesting example is the stable
emulsion between oil and water which is a non--Newtonian fluid, but
oil and water are Newtonian ones.\par
 One of the classic examples of multi--fluid flows is
small amplitude waves propagating at the interface between air and
water, which is called a separated flow. In view of modeling, each
fluid obeys its own equation and couples with each other through the
free surface in this case. Here, the motion of the fluid is governed
by the pair of compressible Euler equations with free surface:
\begin{align}
\partial_{t} \rho_{i}+\nabla \cdot\left(\rho_{i} v_{i}\right) &=0, \quad i=1,2,\label{1.1} \\
\partial_{t}\left(\rho_{i} v_{i}\right)+\nabla \cdot\left(\rho_{i} v_{i} \otimes v_{i}\right)+\nabla p_i &=-g\rho_{i}
e_3\pm F_D.\label{1.2}
\end{align}
In above equations, $\rho_1$ and $v_1$ represent the density and
velocity of the upper fluid (air), and  $\rho_2$ and $v_2$ denote
the density and velocity of the lower fluid (water). $p_{i}$ denotes
the pressure. $-g\rho_{i} e_3$ is the gravitational force with the
constant $g>0$ the acceleration of gravity and $e_3$ the vertical
unit vector, and $F_D$ is the drag force. As mentioned before, the
two fluids (air and water) are separated by the unknown free surface
$z=\eta(x, y, t)$, which is advected with the fluids according to
the kinematic relation:
\begin{equation}\partial_t\eta=u_{1,z}-u_{1,x}\partial_x \eta-u_{1, y}\partial_y \eta\label{1.3}\end{equation}
on two sides of the surface $z=\eta$ and the pressure is continuous
across this surface.\par When the wave's amplitude becomes large
enough, wave breaking may happen. Then, in the region around the
interface between air and water, small droplets of liquid appear in
the gas, and bubbles of gas also appear in the liquid. These
inclusions might be quite small. Due to the appearances of collapse
and fragmentation, the topologies of the free surface become quite
complicated and a wide range of length scales are involved.
Therefore, we encounter the situation where two--fluid models become
relevant if not inevitable. The classic approach to simplify the
complexity of multi--phase flows and satisfy the engineer's need of
some modeling tools is the well--known volume--averaging method (see
\cite{Ishii1, Prosperetti} for details). Thus, by performing such a
procedure, one can derive a model without surface: a two--fluid
model. More precisely, we denote $\alpha^{\pm}$ by the volume
fraction of the liquid (water) and gas (air), respectively.
Therefore, $\alpha^++\alpha^-=1$. Applying the volume--averaging
procedure to the equations \eqref{1.1} and \eqref{1.2} leads to the
following generic compressible two--fluid model:
\begin{equation}\label{1.4}
\left\{\begin{array}{l}
\partial_{t}\left(\alpha^{\pm} \rho^{\pm}\right)+\operatorname{div}\left(\alpha^{\pm} \rho^{\pm} u^{\pm}\right)=0, \\
\partial_{t}\left(\alpha^{\pm} \rho^{\pm} u^{\pm}\right)+\operatorname{div}\left(\alpha^{\pm} \rho^{\pm} u^{\pm} \otimes u^{\pm}\right)
+\alpha^{\pm} \nabla P=-g\alpha^{\pm}\rho^{\pm} e_3\pm F_D,
\end{array}\right.
\end{equation}
where the two fluids are assumed to share the common pressure $P$.
\par
We have already discussed the case of water waves, where a separated
flow can lead to a two--fluid model from the viewpoint of practical
modeling. As stated before, two--fluid flows are very common in
nature, but also in various industry applications such as nuclear
power, chemical processing, oil and gas manufacturing. In terms of
the context, the models used for simulation may be very different.
However, averaged models share the same structure as \eqref{1.4}. By
introducing viscosity and capillary effects, one can generalize the
above system \eqref{1.4} to
\begin{equation}\label{1.5}
\left\{\begin{array}{l}
\partial_{t}\left(\alpha^{\pm} \rho^{\pm}\right)+\operatorname{div}\left(\alpha^{\pm} \rho^{\pm} u^{\pm}\right)=0, \\
\partial_{t}\left(\alpha^{\pm} \rho^{\pm} u^{\pm}\right)+\operatorname{div}\left(\alpha^{\pm} \rho^{\pm} u^{\pm} \otimes u^{\pm}\right)
+\alpha^{\pm} \nabla P=\operatorname{div}\left(\alpha^{\pm} \tau^{\pm}\right)+\sigma^\pm\alpha^{\pm}
\rho^{\pm}\nabla\Delta(\alpha^{\pm} \rho^{\pm}), \\
P=P^{\pm}\left(\rho^{\pm}\right)=A^{\pm}\left(\rho^{\pm}\right)^{\bar{\gamma}^{\pm}},
\end{array}\right.
\end{equation}
where $\rho^{\pm}(x, t) \geqq 0, u^{\pm}(x, t)$ and
$P^{\pm}\left(\rho^{\pm}\right)=A^{\pm}\left(\rho^{\pm}\right)^{\bar{\gamma}^{\pm}}$
denote the densities, velocities of each phase, and the two pressure
functions, respectively. $\bar{\gamma}^{\pm} \geqq 1, A^{\pm}>0$ are
positive constants. In what follows, we set $A^{+}=A^{-}=1$ without
loss of any generality. Moreover, $\tau^{\pm}$ are the viscous
stress tensors
\begin{equation}\label{1.6}
\tau^{\pm}:=\mu^{\pm}\left(\nabla u^{\pm}+\nabla^{t}
u^{\pm}\right)+\lambda^{\pm} \operatorname{div} u^{\pm} \mathrm{Id},
\end{equation}
where the constants $\mu^{\pm}$ and $\lambda^{\pm}$ are shear and
bulk viscosity coefficients satisfying the physical condition:
$\mu^{\pm}>0$ and $2 \mu^{\pm}+3 \lambda^{\pm} \geqq 0,$ which
implies that $\mu^{\pm}+\lambda^{\pm}>0.$ For more information about
this model, we refer to \cite{Bear, Brennen1, Bresch1, Bresch2,
Evje3, Evje4, Evje8, Evje9, Friis1, Raja, Vasseur, Wen1, Yao2,
Zhang4} and references therein. However, it is well--known that as
far as mathematical analysis of two--fluid model is concerned, there
are many technical challenges. Some of them involve, for example:
\begin{itemize}
\item The corresponding linear system of the model has
multiple eigenvalue, which makes mathematical analysis
(well--posedness and stability) of the model become quite difficult
and complicated;

\item Transition to single--phase regions, i.e, regions where the mass
$\alpha^{+} \rho^{+}$ or $\alpha^{-} \rho^{-}$ becomes zero, may
occur when the volume fractions $\alpha^{\pm}$ or the densities
$\rho^{\pm}$ become zero;

\item The system is non--conservative, since the non--conservative terms $\alpha^{\pm} \nabla
P^{\pm}$ are involved in the momentum equations. This brings various
 mathematical difficulties for us to employ methods used
for single phase models to the two--fluid model.

\end{itemize}
\par
Bresch et al. in the seminal work \cite{Bresch1} considered the
generic two-fluid model \eqref{1.5} with the following special
density-dependent viscosities:
\begin{equation}\label{1.7}
\mu^\pm(\rho^\pm)=\mu^\pm \rho^\pm,~~~~~\lambda^\pm(\rho^\pm)=0.
\end{equation}
They obtained the global weak solutions in periodic domain with
$1<\overline{\gamma}^{\pm}< 6$. However, as indicated by themselves,
 their methods rely heavily on the above special
density-dependent viscosities, and particularly cannot handle the
case of constant viscosity coefficients as in \eqref{1.6}. Later,
Bresch--Huang--Li \cite{Bresch2} established the global existence of
weak solutions in one space dimension without capillary effects
(i.e., $\sigma^\pm=0$) when $\overline{\gamma}^{\pm}>1$ by taking
advantage of the one space dimension. Recently, Cui--Wang--Yao--Zhu
\cite{c1} obtained the time--decay rates of classical solutions for
model \eqref{1.5} with the following special density-dependent
viscosities with equal viscosity coefficients, and equal capillary
coefficients:
\begin{equation}\label{1.8}
\mu^\pm(\rho^\pm)=\nu\rho^\pm,~~~~~\lambda^\pm(\rho^\pm)=0,~~~\sigma^+=\sigma^-=\sigma.
\end{equation}
Based on the above special choice for viscosities and capillary
coefficients, they can take a linear combination of model
\eqref{1.5} to reformulate it
 into two $4\times 4$ systems whose linear parts are decoupled with each other and possess
the same dissipation structure as that of the compressible
Navier--Stokes-Korteweg system, and then employ the similar
arguments as in \cite{Bian, Wang-Tan} to prove their main results.
However, since this reformulation played a crucial role in their
analysis, the case of constant viscosities, even if the equal
constant viscosities (i.e.,
$\mu^\pm(\rho^\pm)=\nu,~~\lambda^\pm(\rho^\pm)=\lambda$), cannot be
handled in their settings.

 \par
  In conclusion, all the works \cite{Bresch1, c1} depend essentially on the special
density-dependent viscosities. Therefore, a natural and important
problem is that what will happen when we consider the general
constant viscosities as in \eqref{1.6}. That is to say, what about
the global well--posedness and large time behavior of Cauchy problem
to the two--fluid model \eqref{1.5}--\eqref{1.6} in high dimensions.
However, to our best knowledge, so far there is no result on
mathematical theory of the two--fluid model \eqref{1.5}--\eqref{1.6}
in high dimensions. \par The main purpose of this work is to
establish global well--posedness and large time behavior of
classical solution to the two--fluid model \eqref{1.5}--\eqref{1.6}.
More precisely, for any integer $\ell\geq3$, we show that the
densities and velocities of model \eqref{1.5}--\eqref{1.6} converge
to their corresponding equilibrium states at the $L^2$ rate
$(1+t)^{-\frac{3}{4}}$, and the $k$($\in [1, \ell]$) order spatial
derivatives of them converge to zero at the $L^2$ rate
$(1+t)^{-\frac{3}{4}-\frac{k}{2}}$, which are the same as ones of
the compressible Navier--Stokes system \cite{Duan, Guo2},
Navier--Stokes--Korteweg system \cite{Bian, Wang-Tan} and heat
equation. Moreover, the linear combination of the fraction densities
($\beta^+\alpha^+\rho^++\beta^-\alpha^-\rho^-$) converges to its
corresponding equilibrium state at the $L^2$ rate
$(1+t)^{-\frac{3}{4}}$, and its $k$($\in [1, \ell]$) order spatial
derivative converges to zero at the $L^2$ rate
$(1+t)^{-\frac{3}{4}-\frac{k}{2}}$, but the fraction densities
($\alpha^\pm\rho^\pm$) themselves converge to their corresponding
equilibrium states at the $L^2$ rate $(1+t)^{-\frac{1}{4}}$, and the
$k$($\in [1, \ell]$) order spatial derivatives of them converge to
zero at the $L^2$ rate $(1+t)^{-\frac{1}{4}-\frac{k}{2}}$, which are
slower than ones of their linear combination
($\beta^+\alpha^+\rho^++\beta^-\alpha^-\rho^-$) and the densities.
We think that this phenomenon should owe to the special structure of
the system. Finally, for well--chosen initial data, we also prove
the lower bounds on the decay rates, which are the same as those of
the upper decay rates. Therefore, these decay rates are optimal for
the compressible two--fluid model.
\subsection{New formulation of system \eqref{1.5} and Main Results}
In this subsection, we devote ourselves to reformulating the system
\eqref{1.5} and stating the main results. The relations between the
pressures of $\eqref{1.5}_3$ implies
\begin{equation}\label{1.9}
{\rm d}P
  =
  s_+^2
 {\rm d}\rho^+
  =
   s_-^2
    {\rm d}\rho^-,
 \quad
  {\rm where}
   \quad
    s_\pm
     :=
      \sqrt{ \frac{{\rm d}P}{{\rm d}\rho^\pm}(\rho^\pm)}.
\end{equation}
Here $s_\pm$ represent the sound speed of each phase respectively.
As in \cite{Bresch1}, we introduce the fraction densities
\begin{equation}\label{1.10}
R^\pm
 =
  \alpha^\pm \rho^\pm,
\end{equation}
which together with the relation $\alpha^++\alpha^-=1$ leads to
\begin{equation}\label{1.11}
{\rm d}
 \rho^+
  =
   \frac{1}{\alpha_+}
    ({\rm d}R^+
       -
        \rho^+
         {\rm d}\alpha^+),
  \quad
   {\rm d}
    \rho^-
     =
      \frac{1}{\alpha_-}
       ({\rm d}R^-
         +
          \rho^-
           {\rm d}\alpha^+).
\end{equation}
From \eqref{1.9}--\eqref{1.10}, we finally get
\begin{equation}\notag
{\rm d}\alpha^+
 =
  \frac{\alpha^-s_+^2}{\alpha^-\rho^+s_+^2+\alpha^+\rho^-s_-^2}
   {\rm d}R^+
    -
     \frac{\alpha^+s_-^2}{\alpha^-\rho^+s_+^2+\alpha^+\rho^-s_-^2}
   {\rm d}R^-.
\end{equation}
Substituting the above equality into \eqref{1.11} yields
\begin{equation}\notag
{\rm d}\rho^+
 =
  \frac{s_-^2}{\alpha^-\rho^+s_+^2+\alpha^+\rho^-s_-^2}
   (\rho^-{\rm d}R^++\rho^+{\rm d}R^-),
\end{equation}
and
\begin{equation}\notag
{\rm d}\rho^-
 =
  \frac{s_+^2}{\alpha^-\rho^+s_+^2+\alpha^+\rho^-s_-^2}
   (\rho^-{\rm d}R^++\rho^+{\rm d}R^-),
\end{equation}
which combined with $(\ref{1.9})$ imply for the pressure
differential ${\rm d}P$
\begin{equation}\label{1.12}
{\rm d}P
 =
  \mathcal{C}^2(\rho^- {\rm d} R^+ +\rho^+ {\rm d}R^-),
\end{equation}
where
\begin{equation}\notag
\mathcal{C}^2
 :=
   \frac{s_+^2s_-^2}{\alpha^-\rho^+s_+^2+\alpha^+\rho^-s_-^2},
    \quad
     {\rm and}
      \quad
       s_\pm^2
        =
        \frac{{\rm d}P(\rho^\pm)}{{\rm d}\rho^\pm}
         =
          \tilde{\gamma}^\pm \frac{P(\rho^\pm)}{\rho^\pm}.
\end{equation}
Next, by using the relation: $\alpha^+ +\alpha^- =1$ again, we can
get
\begin{equation}\label{1.13}
\frac{R^+}{\rho^+}
 +
  \frac{R^-}{\rho^-}
   =1,
    \quad
     {\rm and \ therefore}
      \quad
       \rho^-
        =
         \frac{R^-\rho^+}{\rho^+-R^+}.
\end{equation}
By virtue of $\eqref{1.5}_3$, we have
\begin{equation}\notag
\varphi(\rho^+, R^+, R^-)
 :=
  P(\rho^+)
   -
    P\left(\frac{R^-\rho^+}{\rho^+-R^+}\right)
     =
      0.
\end{equation}
Consequently, for any given two positive constants $\tilde R^+$,
$\tilde R^-$, there exists $\tilde \rho^+>\tilde R^+$ such that
\begin{equation}\notag
\varphi(\tilde \rho^+, \tilde R^+, \tilde R^-)=0.
\end{equation}
Differentiating $\varphi$ with respect to $\rho^+$, we get
\begin{equation}\notag
\frac{\partial\varphi}{\partial\rho^+}(\rho^+, R^+, R^-)
 =
  s_+^2+
   s_-^2
    \frac{R^-R^+}{(\rho^+-R^+)^2},
\end{equation}
which implies
\begin{equation}\notag
\frac{\partial\varphi}{\partial\rho^+}(\tilde\rho^+, \tilde R^+,
\tilde R^-)>0.
\end{equation}
Thus, this together with Implicit Function Theorem and \eqref{1.10},
\eqref{1.13} implies that the unknowns $\rho^\pm$, $\alpha^\pm$ and
$\mathcal{C}$ can be given by
\begin{equation}\notag
\rho^\pm
     =
      \varrho^\pm(R^+,R^-),
\qquad
      \alpha^\pm
       =
        \alpha^\pm(R^+,R^-),
         \quad
     {\rm and \ therefore}
      \quad
      \mathcal{C}=\mathcal{C}(R^+, R^-).
\end{equation}
 We refer to [\cite{Bresch1}, pp. 614] for the details.
\par
Therefore, we can rewrite system \eqref{1.5} into the following
equivalent form:
\begin{equation}\label{1.14}
\left\{\begin{array}{l}
\partial_{t} R^{\pm}+\operatorname{div}\left(R^{\pm} u^{\pm}\right)=0, \\
\partial_{t}\left(R^{+} u^{+}\right)+\operatorname{div}\left(R^{+} u^{+}
\otimes u^{+}\right)+\alpha^{+} \mathcal{C}^{2}\left[\rho^{-} \nabla
R^{+}+\rho^{+}\nabla R^{-}\right] \\
\hspace{1.8cm}=\operatorname{div}\left\{\alpha^{+}\left[\mu^{+}\left(\nabla
u^{+}+\nabla^{t} u^{+}\right)
+\lambda^{+} \operatorname{div} u^{+} \operatorname{Id}\right]\right\}+\sigma^+R^+\nabla\Delta R^+, \\
\partial_{t}\left(R^{-} u^{-}\right)+\operatorname{div}\left(R^{-} u^{-} \otimes u^{-}\right)+\alpha^{-}
\mathcal{C}^{2}\left[\rho^{-} \nabla R^{+}+\rho^{+}\nabla R^{-}\right] \\
\hspace{1.8cm}=\operatorname{div}\left\{\alpha^{-}\left[\mu^{-}\left(\nabla
u^{-}+\nabla^{t} u^{-}\right)+\lambda^{-} \operatorname{div} u^{-}
\operatorname{Id}\right]\right\}+\sigma^-R^-\nabla\Delta R^-.
\end{array}\right.
\end{equation}
In the present paper, we consider the Cauchy problem of \eqref{1.14}
subject to the initial condition
\begin{equation}\label{1.15} (R^{+}, u^{+}, R^-, u^{-})(x,
0)=(R_{0}^{+}, u_{0}^{+}, R_{0}^+, u_{0}^{-})(x)\rightarrow(\bar
R^{+}, \overrightarrow{0}, \bar R^{-}, \overrightarrow{0}) \quad
\hbox{as}\quad |x|\rightarrow\infty \in \mathbb{R}^{3},
\end{equation}
where two positive constants  $\bar{R}^{+}$ and $\bar{R}^{-}$ denote
the background doping profile, and in the present paper are taken as
1 for simplicity.

\bigskip
\par
 Before stating
our main result, let us first introduce the notations and
conventions used throughout this paper. We use $H^k(\mathbb R^3)$ to
denote the usual Sobolev space with norm $\|\cdot\|_{H^k}$ and
$L^p$, $1\leq p\leq \infty$ to denote the usual $L^p(\mathbb R^3)$
space with norm $\|\cdot\|_{L^p}$.  For the sake of conciseness, we
do not precise in functional space names when they are concerned
with scalar-valued or vector-valued functions, $\|(f, g)\|_X$
denotes $\|f\|_X+\|g\|_X$.  We will employ the notation $a\lesssim
b$ to mean that $a\leq Cb$ for a universal constant $C>0$ that only
depends on the parameters coming from the problem. We denote
$\nabla=\partial_x=(\partial_1,\partial_2,\partial_3)$, where
$\partial_i=\partial_{x_i}$, $\nabla_i=\partial_i$ and put
$\partial_x^\ell f=\nabla^\ell f=\nabla(\nabla^{\ell-1}f)$.  Let
$\Lambda^s$ be the pseudo differential operator defined by
\begin{equation}\Lambda^sf=\mathfrak{F}^{-1}(|{\bf \xi}|^s\widehat f),~\hbox{for}~s\in \mathbb{R},\nonumber\end{equation}
where $\widehat f$ and $\mathfrak{F}(f)$ are the Fourier transform
of $f$. The homogenous Sobolev space $\dot{H}^s(\mathbb{R}^3)$ with
norm given by $\|f\|_{\dot{H}^s}\overset{\triangle}=\|\Lambda^s
f\|_{L^2}$. For a radial function $\phi\in C_0^\infty(\mathbb
R^3_{{\bf \xi}})$  such that $\phi({\bf \xi})=1$ when $|{\bf
\xi}|\leq \frac{\eta}{2}$ and $\phi({\bf \xi})=0$ when $|{\bf
\xi}|\geq \eta$, where $\eta$ is defined in Lemma \ref{2.1}, we
define the low--frequency part of $f$ by
$$f^l=\mathfrak{F}^{-1}[\phi({\bf \xi})\widehat f]$$
and the high--frequency part of $f$ by
$$f^h=\mathfrak{F}^{-1}[(1-\phi({\bf \xi}))\widehat f].$$
It is direct to check that $f=f^l+f^h$ if Fourier transform of $f$
exists.

\medskip
Now, we are in a position to state our main result.
\smallskip
\begin{Theorem}\label{1mainth}Assume that $R_{0}^{+}-1,~ R_{0}^{-}-1\in H^{\ell+1}(\mathbb{R}^3)$
 and $u_{0}^{+},~ u_{0}^{-}\in H^\ell(\mathbb{R}^3)$ for an integer
$\ell\geq 3$. Then there exists a constant $\delta_0$ such that if
\begin{equation}\label{1.16}
K_0:=\left\|\left(R_{0}^{+}-1, R_{0}^{-}-1,
\right)\right\|_{H^{\ell+1}\cap L^1}+\left\|\left(u_{0}^{+},
u_{0}^{-}\right)\right\|_{H^{\ell}\cap L^1} \leq \delta_0,
\end{equation}
then the Cauchy problem \eqref{1.14}--\eqref{1.15} admits a unique
solution $\left(R^{+}, u^{+}, R^{-}, u^{-}\right)$ globally in time
in the sense that \[
\begin{array}{l}
R^{+}-1, R^{-}-1 \in C^{0}\left([0, \infty) ;
H^{\ell+1}\left(\mathbb{R}^{3}\right)\right) \cap C^{1}\left([0,
\infty) ;
H^{\ell}\left(\mathbb{R}^{3}\right)\right), \\
u^{+}, u^{-} \in C^{0}\left([0, \infty) ;
H^{\ell}\left(\mathbb{R}^{3}\right)\right) \cap C^{1}\left([0,
\infty) ; H^{\ell-2}\left(\mathbb{R}^{3}\right)\right),
\end{array}
\]
and satisfies
\begin{align}\begin{split}\label{1.17}
&\|(R^+-1,
R^--1)(t)\|_{H^{\ell+1}}^2+\|(u^+,u^-)(t)\|_{H^{\ell}}^2\\
&\quad+
\|\left[\beta^+(R^+-1)+\beta^-(R^--1)\right](t)\|_{H^{\ell}}^2\displaystyle+\int_0^t\left(\|\nabla(R^+-1,
R^--1)(\tau)\|_{H^{\ell}}^2+\|(u^+,u^-)(\tau)\|_{H^{\ell}}^2\right.\\
&\quad+\left.\|\left[\beta^+(R^+-1)+\beta^-(R^--1)\right](\tau)\|_{H^{\ell}}^2\right)\textrm{d}\tau\leq
CK_0^2.\end{split}\end{align}
 Moreover, the following convergence rates hold true.
\smallskip
\begin{itemize}
\item {\bf Upper bounds.} For any $t\geq 0,$ and $0\leq k\leq \ell$, it holds that
\begin{equation}\label{1.18}\left\|\nabla^k\left(\rho^{+}-\bar{\rho}^+, \rho^{-}-\bar{\rho}^-\right)(t)\right\|_{H^{\ell-k}}\leq
CK_0(1+t)^{-\frac{3}{4}-\frac{k}{2}},
\end{equation}
\begin{equation}\label{1.19}\left\|\nabla^{k}\left(u^{+},
u^{-}\right)(t)\right\|_{H^{\ell-k}} \leq
CK_0(1+t)^{-\frac{3}{4}-\frac{k}{2}},
\end{equation}
\begin{equation}\label{1.20}\left\|\nabla^{k}\left[\beta^+(R^+-1)+\beta^-(R^--1)\right](t)\right\|_{H^{\ell-k}} \leq
CK_0(1+t)^{-\frac{3}{4}-\frac{k}{2}},
\end{equation}
\begin{equation}\label{1.21}\left\|\nabla^k\left(R^{+}-1, R^{-}-1,
\right)(t)\right\|_{H^{\ell-k+1}}\leq
CK_0(1+t)^{-\frac{1}{4}-\frac{k}{2}},
\end{equation}
where $\bar{\rho}^\pm=\rho^\pm(1,1)$ denote equilibrium states of
$\rho^\pm$ respectively, and
$\beta^\pm=\sqrt{\frac{\bar{\rho}^\mp}{{\rho}^\pm}}$.
\smallskip
\item {\bf Lower bounds.} Let  $(n_0^+,u_0^+,R_0^-,u_0^-)=(R_0^+-1,
u_0^+,R_0^--1,u_0^-)$ and assume that Fourier transform of functions
$(n_0^+,u_0^+,n_0^-,u_0^-)$ satisfy
\begin{equation}\label{1.22}\quad\quad\widehat{n}_0^-(\xi)=0,~
\wedge^{-1}\text{\rm div}\
\widehat{u}_0^+(\xi)=\widehat{n}_0^+(\xi)=0,~\text{\rm and }~
\wedge^{-1}\text{\rm div}\ \widehat{u}_0^-(\xi)-K_0^\vartheta\sim
|\xi|^s,
\end{equation}
for any $|\xi|\le \eta$, where $\vartheta<2$ and $s>0$ are two given
constants. Then there is a positive constant $C_0$ independent of
$t$ such that for any large enough $t$ and $0\leq k\leq \ell$, it
holds that
\begin{equation}\label{1.23}
\min\left\{\|\nabla^k
(\rho^+-\bar{\rho}^+)(t)\|_{H^{\ell-k}},\|\nabla^k
(\rho^--\bar{\rho}^-)(t)\|_{H^{\ell-k}}\right\}\geq
C_0(1+t)^{-\frac{3}{4}-\frac{k}{2}},
\end{equation}
\begin{equation}\label{1.24}
\min\left\{\|\nabla^k u^+(t)\|_{H^{\ell-k}},\|\nabla^k
u^-(t)\|_{H^{\ell-k}}\right\}\geq
C_0(1+t)^{-\frac{3}{4}-\frac{k}{2}},
\end{equation}
\begin{equation}\label{1.25}
\min\left\{\|\nabla^k
\left[\beta^+(R^+-1)+\beta^-(R^--1)\right](t)\|_{H^{\ell-k}}\right\}\geq
C_0(1+t)^{-\frac{3}{4}-\frac{k}{2}},
\end{equation}
\begin{equation}\label{1.26}
\min\left\{\|\nabla^k (R^+-1)(t)\|_{H^{\ell-k+1}},\|\nabla^k
(R^--1)(t)\|_{H^{\ell-k+1}}\right\}\geq
C_0(1+t)^{-\frac{1}{4}-\frac{k}{2}},
\end{equation}
\end{itemize}
\end{Theorem}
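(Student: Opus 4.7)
The plan is to use the perturbation ansatz $n^\pm := R^\pm - 1$ around the equilibrium $(R^\pm, u^\pm) = (1, 0)$ and rewrite \eqref{1.14}--\eqref{1.15} as a quasilinear system for $U := (n^+, u^+, n^-, u^-)$. Linearising at $U=0$ and expanding $\mathcal{C}^2$, $\alpha^\pm$, $\rho^\pm$ as smooth functions of $(R^+, R^-)$, the principal part of the pressure gradient becomes $\bar{\alpha}^\pm \bar{\mathcal{C}}^2(\bar{\rho}^- \nabla n^+ + \bar{\rho}^+ \nabla n^-)$, which couples the two continuity equations to the two momentum equations. The structural observation that drives the whole analysis is the following: with the weights $\beta^\pm = \sqrt{\bar{\rho}^\mp/\bar{\rho}^\pm}$, the scalar combination $\psi := \beta^+ n^+ + \beta^- n^-$, together with a suitable linear combination of $u^+$ and $u^-$, satisfies at the linear level a damped Navier--Stokes--Korteweg-type system with the standard mixed hyperbolic--parabolic dissipation producing the $(1+t)^{-3/4-k/2}$ heat-kernel decay. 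The complementary mode, essentially $\beta^- n^+ - \beta^+ n^-$ up to a constant, couples only to lower-order dissipation, and it is this weak dissipation that yields the slower rate $(1+t)^{-1/4-k/2}$ in \eqref{1.21} and \eqref{1.26}.

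For the global existence and the uniform bound \eqref{1.17}, I would combine a standard local existence result in $H^{\ell+1}\times H^\ell$ with \emph{a priori} energy estimates. The natural energy functional sums $\|\nabla^k (n^+, n^-)\|_{L^2}^2$ for $0\le k\le \ell+1$, $\|\nabla^k(u^+, u^-)\|_{L^2}^2$ for $0\le k\le \ell$, and the additional piece $\|\nabla^k \psi\|_{L^2}^2$ recording the extra regularity of the fast mode. Viscosity produces dissipation in $\|\nabla u^\pm\|_{H^\ell}$ and capillarity produces dissipation in $\|\nabla^2 n^\pm\|_{H^\ell}$; the still-missing dissipation of $\|\nabla n^\pm\|$ at the linear level is recovered by Hoff-type cross estimates of the form $\frac{d}{dt}\langle \nabla^k n^\pm, \nabla^{k-1} u^\pm\rangle$. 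Nonlinear terms, all of which are at least quadratic in $U$ and whose derivative count at worst matches the linear order, are absorbed under the smallness assumption \eqref{1.16}, which closes the bootstrap and yields \eqref{1.17} together with global existence.

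For the upper decay bounds \eqref{1.18}--\eqref{1.21}, I would split each unknown into its low- and high-frequency parts via $\phi(\xi)$ and use Duhamel's formula $U(t) = e^{tL} U_0 + \int_0^t e^{(t-\tau)L}N(U)(\tau)\, d\tau$. Spectral analysis of the linear symbol, in the Matsumura--Nishida spirit, reveals a hierarchy of eigenvalues: fast branches of the form $-c|\xi|^2 \pm i b|\xi|$ produce the parabolic $(1+t)^{-3/4-k/2}$ decay of the $L^1\to L^2$ kernel, while a complementary slow branch whose low-frequency dissipation is of strictly lower order in $|\xi|$ gives the $(1+t)^{-1/4-k/2}$ rate. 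Because $\rho^\pm$, $u^\pm$ and $\psi$ project out the slow direction to leading order, they inherit the fast rate; the individual $R^\pm-1$ carry a nonzero component on the slow mode and hence decay only as in \eqref{1.21}. The Duhamel nonlinear contribution is controlled in $L^1\cap L^2$ using the already-proved upper bounds on one factor in a standard bootstrap, and does not worsen the rate.

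For the lower bounds \eqref{1.23}--\eqref{1.26}, I would return to the explicit Fourier representation of $e^{tL}$. The hypothesis \eqref{1.22} places $\widehat{U}_0$ entirely on the designated mode and away from the cancelling projections, so the low-frequency part of $e^{tL}U_0$ is directly lower-bounded in $L^2$ by a quantity of the required order in $t$. The nonlinear correction is shown via Duhamel and the upper bounds already established to decay strictly faster than the linear part; the assumption $\vartheta<2$ in \eqref{1.22} is precisely the threshold ensuring that $K_0^\vartheta$ dominates the quadratic correction at large times. The main technical obstacle, and what distinguishes this work from \cite{Bresch1, c1}, is the spectral analysis under \emph{general} constant $(\mu^\pm, \lambda^\pm, \sigma^\pm)$: no explicit linear change of variables decouples \eqref{1.14} into two Navier--Stokes--Korteweg subsystems as in \cite{c1}, so the slow/fast splitting must be carried out perturbatively and paired with a carefully designed energy functional that isolates the slow mode. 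Controlling the loss of one derivative produced by the non-conservative pressure term $\alpha^\pm\mathcal{C}^2(\rho^-\nabla R^+ + \rho^+\nabla R^-)$ while preserving the sharp low-frequency $L^1\to L^2$ estimate is, in my view, the heart of the argument.
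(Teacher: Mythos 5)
Your high-level architecture (Hodge/low--high frequency split, spectral analysis of the reduced symbol, Duhamel bootstrap with time-weighted energy functionals, Plancherel for the lower bounds) matches the paper's. But there are three concrete gaps in your plan that would derail the argument as stated.

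First, the mechanism you propose for the slower rate $(1+t)^{-1/4-k/2}$ on $n^\pm$ is wrong. You predict ``a complementary slow branch whose low-frequency dissipation is of strictly lower order in $|\xi|$.'' In fact, after Hodge decomposition the Green matrix $\mathcal A_1(\xi)$ in \eqref{2.17} has \emph{all four} eigenvalues with $\Re\lambda_i\sim -c|\xi|^2$ as $\xi\to0$; see \eqref{2.18}. The slowdown for $n^\pm$ individually comes not from any eigenvalue but from the \emph{projectors} $P_3(\xi), P_4(\xi)$ in \eqref{2.27}--\eqref{2.28}, whose entries acting on $\widehat{\varphi^\pm_0}$ carry factors of order $\frac{1}{|\xi|}$ (these survive in the $n^+$ and $n^-$ rows but cancel for the combination $\beta_1 n^++\beta_2 n^-$). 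That extra $|\xi|^{-1}$ on the compact support $|\xi|\le\eta$ is what costs one half-power of $t$ in the $L^2$ estimate \eqref{2.35}; the cancellation explains \eqref{2.36}. If you hunt for a slowly-dissipating eigenvalue you will not find it, and you will not be able to derive \eqref{1.21} or \eqref{1.26}.

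Second, your proposed energy functional ``sums $\|\nabla^k(n^+,n^-)\|_{L^2}^2$ for $0\le k\le\ell+1$.'' The $k=0$ piece cannot be included: as the paper shows via the linear identity \eqref{1.31}--\eqref{1.32}, the natural energy controls only $\|\beta^+ n^++\beta^- n^-\|_{L^2}$, $\|\nabla n^\pm\|_{L^2}$ and $\|u^\pm\|_{L^2}$, \emph{not} $\|n^\pm\|_{L^2}$. The paper accordingly builds $\mathcal E_0^\ell$ without the zero-order mass terms and recovers $\|(n^+,n^-)\|_{L^2}$ through a separate low/high-frequency argument (the functional $E_0(t)$ in \eqref{1.36} together with Lemma \ref{lh2}). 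Third, you do not address the obstruction that, plugging the coupling nonlinearities $g_\pm\partial_i n^\pm + \bar g_\pm\partial_i n^\mp$ directly into Duhamel produces only a $(1+t)^{-3/4}\ln(1+t)$ bound, which fails to close the bootstrap. The paper's fix is the algebraic rewriting \eqref{4.7}--\eqref{4.8}: these terms are $-\partial_i G_2^i$, $-\partial_i G_4^i$ plus ``good'' terms proportional to $\beta^+n^++\beta^-n^-$ and to $\rho^\pm-\bar\rho^\pm$, both of which carry the fast rate. Without this decomposition the nonlinear Duhamel estimate \eqref{4.13} does not hold with the right power of $t$.
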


\begin{remark} Compared to Cui--Wang--Yao--Zhu
\cite{c1}, where the model \eqref{1.5} with \eqref{1.8} was
considered, the main new contribution of Theorem \ref{1mainth} is
four--fold: First, as mentioned before, \eqref{1.8} played an
essential role in their analysis. Therefore, we need to develop new
thoughts to overcome the difficulties arising from the general
constant viscosities as in \eqref{1.6}, which will be explained
later. Second, \eqref{1.18} and \eqref{1.20} give the optimal decay
rates on the densities and linear combination of the fraction
densities: $\beta^+(R^+-1)+\beta^-(R^--1)$, which are totally new as
compared to Cui--Wang--Yao--Zhu \cite{c1}. Third, noticing that in
Cui--Wang--Yao--Zhu \cite{c1}, the $(\ell-1)$--th and $\ell$--th
spatial derivatives of the velocities decay at the same $L^2$ rate
$(1+t)^{-\frac{3}{4}-\frac{\ell-2}{2}}$, and the $\ell$--th and
$(\ell+1)$--th spatial derivatives of the fraction densities decay
at the same $L^2$ rate $(1+t)^{-\frac{3}{4}-\frac{\ell-1}{2}}$,
which are slower than ones in \eqref{1.19} and \eqref{1.21}.
Finally, for well--chosen initial data, \eqref{1.23}--\eqref{1.26}
show the lower bounds on the decay rates, which are the same as
those of the upper decay rates, and completely new as compared to
Cui--Wang--Yao--Zhu \cite{c1}. Therefore, our decay rates are
optimal in this sense.
\end{remark}

\smallskip
\smallskip

\indent Now, let us sketch the strategy of proving Theorem
\ref{1mainth} and explain some of the main difficulties and
techniques involved in the process. Different from
Cui--Wang--Yao--Zhu \cite{c1} where the model \eqref{1.5} with
\eqref{1.8} was considered, we need to develop new ideas to tackle
with the difficulties from general constant viscosities and
capillary coefficients. To see this, by taking $n^{\pm}=R^{\pm}-1$,
one can write the corresponding linear system of model \eqref{1.5}
in terms of the variables $(n^+, u^+, n^-, u^-)$:
\begin{equation}\label{1.27}
\left\{\begin{array}{l}
\partial_{t} n^++\operatorname{div}u^+=0, \\
\partial_{t}u^{+}+\beta_1\nabla n^++\beta_2\nabla
n^--\nu^+_1\Delta u^+-\nu^+_2\nabla\operatorname{div} u^+-\sigma^+\nabla\Delta n^+=0, \\
\partial_{t} n^-+\operatorname{div}u^-=0, \\
\partial_{t}u^{-}+\beta_3\nabla n^++\beta_4\nabla
n^--\nu^-_1\Delta u^--\nu^-_2\nabla\operatorname{div} u^--\sigma^-\nabla\Delta n^-=0, \\
\end{array}\right.
\end{equation}
where $\nu_{1}^{\pm}=\frac{\mu^{\pm}}{\bar{\rho}^{\pm}}$,
$\nu_{2}^{\pm}=\frac{\mu^{\pm}+\lambda^{\pm}}{\bar{\rho}^{\pm}}>0$,
$\beta_{1}=\frac{\mathcal{C}^{2}(1,1)
\bar{\rho}^{-}}{\bar{\rho}^{+}}$,
$\beta_{2}=\beta_{3}=\mathcal{C}^{2}(1,1)$,
$\beta_{4}=\frac{\mathcal{C}^{2}(1,1)
\bar{\rho}^{+}}{\bar{\rho}^{-}}$. In view of \eqref{1.8}, the system
\eqref{1.27} can be reduced to
\begin{equation}\label{1.28}
\left\{\begin{array}{l}
\partial_{t} n^++\operatorname{div}u^+=0, \\
\partial_{t}u^{+}+\beta_1\nabla n^++\beta_2\nabla
n^--\nu\Delta u^+-\nu\nabla\operatorname{div} u^+-\sigma\nabla\Delta n^+=0, \\
\partial_{t} n^-+\operatorname{div}u^-=0, \\
\partial_{t}u^{-}+\beta_3\nabla n^++\beta_4\nabla
n^--\nu \Delta u^--\nu\nabla\operatorname{div} u^--\sigma\nabla\Delta n^-=0. \\
\end{array}\right.
\end{equation}
Based on the above special linear system, the main observation of
Cui--Wang--Yao--Zhu \cite{c1} is to introduce four linear
combinations:
\begin{equation}\notag
N^+:=\beta_3 n^++\beta_4 n^-,\ \ \ \ N^-:=\beta_3 n^+-\beta_4 n^-,
\end{equation}
\begin{equation}\notag
U^+:= \beta_3 u^++ \beta_4 u^-,\ \ \ \ U^-:= \beta_3  u^+- \beta_4
u^-.
\end{equation}
Then, the system \eqref{1.28} can be divided into two new linear
system:
\begin{equation}\label{1.29}
\left\{\begin{array}{l}
\partial_t N^+
  +
    {\rm div}U^+
       =0,\\[2mm]
\partial_tU^+
 +
  ( \beta_1 + \beta_4 )\nabla N^+
     -
      \nu\Delta U^+
       -
        \nu\nabla{\rm div}U^+
         -
          \sigma \nabla \Delta N^+
           =
            0,\\[2mm]
 \end{array}
        \right.
\end{equation}
and
\begin{equation}\label{1.30}
\left\{\begin{array}{l}
\partial_t N^-
     +
    {\rm div}U^-
       =0,\\[2mm]
\partial_t U^-
 +
  (\beta_1 - \beta_4)\nabla N^+
     -
      \nu\Delta U^-
       -
        \nu\nabla{\rm div}U^-
         -
          \sigma \nabla \Delta N^-
           =
            0\\
 \end{array}
        \right.
\end{equation}
It is worth mentioning that \eqref{1.29} are decoupled from $(N^-,
U^-)$ and possesses the same dissipative structure as that of the
compressible Navier--Stokes--Korteweg system \cite{Bian, Wang-Tan},
while \eqref{1.30} are coupled with $\nabla N^+$ and possesses the
similiar dissipative structure as that of the compressible
Navier--Stokes--Korteweg system \cite{Bian, Wang-Tan}. Thus, by
making full use of these good properties of \eqref{1.29} and
\eqref{1.30},
 Cui--Wang--Yao--Zhu \cite{c1} can modify the methods of \cite{Bian,
Wang-Tan} to prove their main results. However, since their analysis
relies heavily on this reformulation, the case of general constant
viscosities as in \eqref{1.6}, even if the equal constant
viscosities, cannot be handled in their settings. Indeed, for the
case of the equal constant viscosities (i.e.
$\mu^\pm(\rho^\pm)=\nu,~~~\lambda^\pm(\rho^\pm)=\lambda$), we have
\begin{equation}\notag
\nu_1^\pm=\frac{\nu}{\bar{\rho}^\pm},~~~\hbox{and}~~~{\nu_2^\pm=\frac{\nu+\lambda}{\bar{\rho}^\pm}}.
\end{equation}
This particularly implies that the system \eqref{1.27} cannot be
reduced to system \eqref{1.28}, since $\nu_1^+\neq \nu_1^-$ and
$\nu_2^+\neq \nu_2^-$ when $\bar{\rho}^+\neq \bar{\rho}^+$. The key
idea here is that, instead of using this reformulation, we will work
on the model \eqref{1.5} with \eqref{1.6} directly, which makes the
problem become quite difficult and complicated. In what follows, we
will give a brief interpretation for the main idea of the proof.\par
To begin with, we give a heuristic description of our strategy.
Multiplying $\eqref{1.27}_1$, $\eqref{1.27}_2$, $\eqref{1.27}_3$ and
$\eqref{1.27}_4$ by $\frac{{\beta}_1}{{\beta}_2}n^+$,
$\frac{1}{{\beta}_2}u^+$, $\frac{{\beta}_4}{{\beta}_3}n^-$ and
$\frac{1}{{\beta}_3}u^-$, one can easily get the nature energy
equation of the linear system \eqref{1.27}:
\begin{align}\begin{split}\label{1.31}
&\partial_t\mathcal{E}_0(t)+\mathcal{D}_0(t)\\
&:=\displaystyle\partial_t\int_{\mathbb{R}^3}\left(
\frac{{\beta}_1}{2{\beta}_2}\left|n^+\right|^2+\frac{{\beta}_4}{2{\beta}_3}\left|n^-\right|^2+n^+n^-
+\frac{\sigma^+}{2{\beta}_2}\left|\nabla
n^+\right|^2+\frac{\sigma^-}{2{\beta}_3}\left|\nabla n^-\right|^2 +
\frac{1}{2{\beta}_2}\left|u^+\right|^2+\frac{1}{2{\beta}_3}\left|u^-\right|^2\right)\textrm{d}x\\
&\quad+\displaystyle\int_{\mathbb{R}^3}\frac{1}{{\beta}_2}\left(
\nu_1^+\left|\nabla
u^+\right|^2+\nu_2^+\left|\hbox{div}u^+\right|^2\right)+\frac{1}{{\beta}_3}\left(
\nu_1^-\left|\nabla
u^-\right|^2+\nu_2^-\left|\hbox{div}u^-\right|^2\right)\textrm{d}x=0,
\end{split}\end{align}
where $\mathcal{E}_0(t)$ and $\mathcal{D}_0(t)$ denote the nature
energy and dissipation, respectively. Noticing the fact that
${\beta}_1{\beta}_4={\beta}_2{\beta}_3={\beta}_3^2$, it is clear
that
\begin{equation}\label{1.32}\displaystyle\mathcal{E}_0(t)=\frac{1}{2}\int_{\mathbb{R}^3}\left(
\left(\beta^+n^++\beta^-n^-\right)^2+\frac{\sigma^+}{{\beta}_2}\left|\nabla
n^+\right|^2+\frac{\sigma^-}{{\beta}_3}\left|\nabla n^-\right|^2+
\frac{1}{{\beta}_2}\left|u^+\right|^2+\frac{1}{{\beta}_3}\left|u^-\right|^2\right)\textrm{d}x.\end{equation}
This together with the energy equation \eqref{1.31} makes it
impossible for us to get the uniform energy estimates of $n^\pm$
simultaneously, even though in the linear level, but possibly the
uniform energy estimates of their linear combination:
$\beta^+n^++\beta^-n^-$, and $\nabla n^\pm$. Therefore, the linear
combination: $\beta^+n^++\beta^-n^-$ may be a good dissipative
variable. On the other hand, by virtue of Mean Value Theorem, we
have $\rho^\pm-\bar{\rho}^\pm \sim
\frac{\mathcal{C}^2(1,1)\sqrt{\bar{\rho}^+\bar{\rho}^-}}{s^2_{\pm}}\left(\beta^+n^++\beta^-n^-\right)$.
In the spirit of these heuristic observations, it is natural to
conjecture that $\rho^\pm-\bar{\rho}^\pm$ have the same decay rate
in time as the density of the compressible Navier--Stokes--Korteweg
system \cite{Bian, Wang-Tan}. As a matter of fact, this key
observation plays a vital role in our analysis. Roughly speaking,
our proofs mainly involves the following four steps.\par First, we
deduce spectral analysis and linear $L^2$ estimates on
 $(n^+, u^+,n^-, u^-)$ of the solution to the
linear system of \eqref{2.1}. To derive time--decay estimates of the
linear system \eqref{2.11}, it requires us to make a detailed
analysis on the properties of the semigroup. We therefore encounter
a fundamental obstacle that the matrix $\mathcal A(\xi)$ in
\eqref{2.12} is an $8$--order matrix and is not self--adjoint.
Particularly, it is easy to check that the matrix $\mathcal A(\xi)$
cannot be diagonalizable (see \cite{Sideris} pp.807 for example).
Thus, it seems impossible to apply the usual time decay
investigation through spectral analysis. To get around this
difficulty, we will employ the Hodge decomposition technique firstly
introduced by Danchin \cite{Dan1} to split the linear system into
three systems. One is a $4\times 4$ system and the other two are
classic heat equations. Unfortunately, Green Matrix $\mathcal
A_1(\xi)$ for Fourier transform of the $4\times 4$ system may have
multiple eigenvalue, and particularly cannot be diagonalizable. To
overcome this difficulty, we first deal with the case of no multiple
eigenvalue. Then, in the spirit of the case of no multiple
eigenvalue, we handle the case of multiple eigenvalue. The key idea
here is that we first assume
 that a similar expression of the semigroup as
 in \eqref{2.30} holds, which however is crucial for the derivation of
time--decay estimates. Then, by employ a clever decomposition and
careful analysis, we can get the explicit expressions of $P_i(\xi)$
for $i=1,2, 3, 4.$ We refer to the proof of \eqref{2.30} for
details.\par Second, we make energy estimates of the nonlinear
system \eqref{2.1}. To begin with, similar to the proof of
\eqref{1.31}, for $0\leq k\leq \ell$, we have
\begin{align}\label{1.33}
&
 \frac{\rm d}{{\rm d}t}\left\{\|\nabla^k\left(\beta^+ n^++\beta^-n^-\right)\|_{L^2}^2+\frac{\sigma^+}{\beta_2}
 \|\nabla^{k+1}
 n^+\|_{L^2}^2+\frac{\sigma^-}{\beta_3} \|\nabla^{k+1} n^-\|_{L^2}^2
 + \frac{1}{\beta_2} \|\nabla^ku^+\|_{L^2}^2+\frac{1}{\beta_3} \|\nabla^ku^-\|_{L^2}^2
\right\}\notag\\
&\quad+\displaystyle C\Big(\nu^+_1\|\nabla^{k+1}
u^+\|_{L^2}^2+\nu^+_2\|\nabla^k\text{\rm div}
u^+\|_{L^2}^2+\nu^-_1\|\nabla^{k+1}
u^-\|_{L^2}^2+\nu^-_2\|\nabla^k\text{\rm div} u^-\|_{L^2}^2\Big)
\\
&\lesssim\delta\Big(\|\nabla^{k+1} (n^+, n^-)\|_{H^1}^2
+\|\nabla^{k} (u^+, u^-)\|_{L^2}^2\Big).\notag
\end{align}
Noticing that \eqref{1.33} only involves the dissipation  of
$u^\pm$, we need to derive the dissipation estimates of $n^\pm$. In
fact, for $0\leq k\leq \ell$, it holds that
\begin{equation}\label{1.34}
\begin{split}
& \frac{\rm d}{{\rm d}t}
 \left\{\left\langle \nabla^k u^+, \frac{1}{\beta_2}\nabla \nabla^k n^+\right\rangle
 +\left\langle\nabla^k u^-,\frac{1}{\beta_3}\nabla\nabla^k n^-\right\rangle
 \right\}\\
 &\quad+C\Big\|\nabla^{k+1}\left(\beta^+\nabla n^+ +\beta^-\nabla n^-\right)\|_{L^2}^2
+\|\nabla^{k+2} n^+\|_{L^2}^2+\|\nabla^{k+2} n^-\|_{L^2}^2\Big)\\
&\lesssim\Big(\delta \|\nabla^{k+1}
(n^+,n^-)\|_{L^2}^2+\|\nabla^{k+1} (u^+,u^-)\|_{L^2}^2\Big).
\end{split}
\end{equation}
It should be mentioned that different from Lemma \ref{2.2} and Lemma
\ref{2.3} of \cite{c1}, where $\|\nabla^{k} (n^+, n^-)\|_{H^2}^2$
are involved in the right--hand side of the corresponding energy
inequality, \eqref{1.33} and \eqref{1.34} are new and different
since their right--hand side only includes $\|\nabla^{k+1} (n^+,
n^-)\|_{H^1}^2$, and particularly excludes the term $\|\nabla^{k}
(n^+, n^-)\|_{L^2}^2$. These new types of energy inequality
\eqref{1.33} and \eqref{1.34} are crucial for us to close energy
estimates at each $k$--th level.\par Third, we close energy
estimates and prove the upper bounds on optimal decay rates. As
mentioned before, the nature energy equation \eqref{1.31} implies
that it seems impossible to get the energy estimates of $n^\pm$, but
possibly the linear combination $\beta^+n^++\beta^-n^-$ and the
derivatives of $n^\pm$. Inspired by this key observation, the main
idea here is to introduce two new time--weighted energy functionals
and then estimate them separately. To begin with, we define the
following two time--weighted energy functionals
\begin{equation}\label{1.35} E_k^{\ell}(t)=\sup\limits_{0\leq\tau\leq t}\Big
\{(1+\tau)^{\frac{3}{4}+\frac{k}{2}}\Big(\|\nabla^k(\beta^+n^++\beta^-n^-,u^+,u^-)(\tau)\|_{H^{\ell-k}}+
\|\nabla^{k+1}(n^+, n^-)(\tau)\|_{H^{\ell-k}}\Big)\Big \},
\end{equation}
for $0\leq k\leq \ell$, and
\begin{equation}\label{1.36} E_0(t)=\sup\limits_{0\leq\tau\leq t}\Big
\{(1+\tau)^{\frac{1}{4}} \|(n^+, n^-)(\tau)\|_{L^2}\Big \}.
\end{equation}
Then, by virtue of \eqref{1.33} and \eqref{1.34}, we have
 \begin{equation}\label{1.37}\frac{\mathrm{d}}{\mathrm{d}t}\mathcal
E_0^{\ell}(t)+C\mathcal E_0^{\ell}(t)\le
 C\Big(\|(\beta^+n^{+,l}+\beta^-n^{-,l})(t)\|^2_{L^2}+\|\nabla( n^{+,l}, n^{-,l})(t)\|^2_{L^2}+\|( u^{+,l}, u^{-,l})(t)\|^2_{L^2}
\Big),
\end{equation}
where $\mathcal E_0^{\ell}(t)$ is equivalent to
$\|(\beta^+n^++\beta^-n^-)(t)\|^2_{H^{\ell}}+\|\nabla(n^+,
n^-)(t)\|^2_{H^{\ell}}+\|(u^+, u^-)(t)\|^2_{H^{\ell}}$. Next, we
estimate the terms in the right--side of \eqref{1.37}. To illustrate
our idea, we use $\|( u^{+,l}, u^{-,l})(t)\|_{L^2}$ as an example.
By virtue of
 Duhamel's principle, integration by parts,
 linear estimates obtained in Step 1, we have
\begin{equation}\begin{split}\label{1.38}
\|(u^{+,l}, u^{-,l})\|_{L^2}&\leq K_0(1+t)^{-\frac{3}{4}}\|
U(0)\|_{L^1}+\int_0^t(1+t-\tau)^{-\frac{5}{4}}
 \|\mathcal(n^+u^+, n^-u^-)(\tau)\|_{L^1}\mathrm{d}\tau\\
 &\quad+\int_0^t
 \|\text{e}^{(t-\tau)\mathcal{B}}(F_2, F_4)(\tau)\|_{L^2}\mathrm{d}\tau\\
&\lesssim
(1+t)^{-\frac{3}{4}}\left(K_0+E_0(t){E}_0^\ell(t)\right)+\int_0^t
 \|\text{e}^{(t-\tau)\mathcal{B}}(F_2, F_4)(\tau)\|_{L^2}\mathrm{d}\tau.
\end{split}\end{equation} On the other hand, it is clear that the strongly coupling terms
like $g_+\left(n^{+}, n^{-}\right)
\partial_{i} n^{+}+\bar{g}_{+}\left(n^{+}, n^{-}\right) \partial_{i}
n^{-}$ in \eqref{2.3} and $g_-\left(n^{+}, n^{-}\right)
\partial_{i} n^{-}+\bar{g}_{-}\left(n^{+}, n^{-}\right) \partial_{i}
n^{+}$ in \eqref{2.5} devote the slowest time--decay rates to the
third term on the right--side of \eqref{1.38}. Therefore, this
together with \eqref{1.38} implies that we can only obtain the
following estimate:
\begin{equation}\notag
\|(u^{+,l}, u^{-,l})\|_{L^2}\lesssim
(1+t)^{-\frac{3}{4}}\ln(1+t)\left(K_0+E_0(t){E}_0^\ell(t)\right),
\end{equation}
which however is not quickly enough for us to close energy
estimates. To overcome this essential difficulty, it is crucial to
develop new thoughts to deal with the trouble terms:
$g_+\left(n^{+}, n^{-}\right)
\partial_{i} n^{+}+\bar{g}_{+}\left(n^{+}, n^{-}\right) \partial_{i}
n^{-}$ and $g_-\left(n^{+}, n^{-}\right)
\partial_{i} n^{-}+\bar{g}_{-}\left(n^{+}, n^{-}\right) \partial_{i}
n^{+}$. The main idea here is that in view of the linear combination
$\beta^+n^++\beta^-n^-$, we rewrite them in a clever way. More
specifically, we surprising find that
\begin{align*}
g_+\left(n^{+}, n^{-}\right)
\partial_{i} n^{+}+\bar{g}_{+}\left(n^{+}, n^{-}\right) \partial_{i}
n^{-}=\partial_i G_2+\hbox{good terms},\\
g_-\left(n^{+}, n^{-}\right)
\partial_{i} n^{-}+\bar{g}_{-}\left(n^{+}, n^{-}\right) \partial_{i}
n^{+}=\partial_i G_4+\hbox{good terms}.
\end{align*}
With this key observation, one can shift the derivatives of $G_2$
and $G_4$ onto the solution semigroup to derive the desired decay
estimates of $\|(u^{+,l}, u^{-,l})\|_{L^2}$. It is worth mentioning
that good dissipation properties of $\rho^\pm-\bar{\rho}^\pm$ play
an important role in this process. Therefore, we finally deduce that
 \begin{equation}\label{1.39} E_0^{\ell}(t)\lesssim\left[K_0+E_0(t)E_0^\ell(t)+\left(E_0^\ell(t)\right)^2\right].\end{equation}
Next, we deal with $E_0(t)$. As mentioned before, due to special
structure of the natural energy equation \eqref{1.31}, we cannot
employ the nonlinear energy estimates obtained in Step 2. Instead,
we will make full use of the benefit of low-frequency and
high-frequency decomposition to get
\begin{equation}\begin{split}\nonumber\|(n^+, n^-)\|_{L^2}&\lesssim
\left(\|(n^{+,l}, n^{-,l})\|_{L^2}+\|(n^{+,h},
n^{-,h})\|_{L^2}\right)\\
&\lesssim\left(\|(n^{+,l}, n^{-,l})\|_{L^2}+\|\nabla(n^{+},
n^{-})\|_{L^2}\right)\\
&\lesssim
(1+t)^{-\frac{1}{4}}\left[K_0+E_0(t)E_0^\ell(t)+\left(E_0^\ell(t)\right)^2\right],
 \end{split}\end{equation}
which leads to
\begin{equation}\label{1.40}E_0(t)\lesssim\left[K_0+E_0(t)E_0^\ell(t)+\left(E_0^\ell(t)\right)^2\right].\end{equation}
Finally, combining \eqref{1.39} with \eqref{1.40}, we can close
energy estimates (see the proof of \eqref{4.19} for details). For
$1\leq k\leq \ell$, the new difficulty is how to control the terms
involving $\nabla^{\ell+2}u^\pm$ which however don't belong to the
solution space. To get around this difficulty, we separate the time
interval into two parts and make full use of the benefit of
low-frequency and high-frequency decomposition (see the proof of
\eqref{4.31} for details). Then, we can employ similar arguments to
prove $E_k^\ell(t)\leq CK_0$ by induction. This in turn proves the
optimal decay rates in \eqref{1.19}--\eqref{1.21}. Finally, we can
make full use of the expression of the pressure differential in
\eqref{1.12}, and \eqref{1.19}--\eqref{1.21} to prove \eqref{1.18}.
We refer to the proof of \eqref{4.25} for details.
\par In the last
step, we show the low-bounds on the convergence rates of solutions.
First, we deal with $(n^+, n^-, u^+,u^-, \beta^+n^++\beta^-n^-)$. To
begin with, we will employ Plancherel theorem and careful analysis
on the solution semigroup to derive the lower convergence rate in
$\dot H^{-1}(\mathbb{R}^3)$ and $L^2(\mathbb{R}^3)$. Then, we can
prove the lower bound on the convergence rates in $L^2$-norm for the
higher-order spatial derivatives by an interpolation trick. Second,
we tackle with $(\rho^\pm-\bar{\rho}^\pm)$. By fully using the
expression of the pressure differential $dP$ and embedding estimate
of Riesz potential, we can get the lower convergence rate in
$L^2(\mathbb{R}^3)$. For $1\leq k\leq \ell$, we can prove the lower
bound on the $L^2$ convergence rates for the $k$--th spatial
derivatives by an interpolation trick.

\section{\leftline {\bf{Spectral analysis and linear $L^2$ estimates.}}}
\setcounter{equation}{0}
\subsection{Reformulation} In this subsection, we first reformulate the system.
Setting   \[ n^{\pm}=R^{\pm}-1,
\]
the Cauchy problem \eqref{1.14}--\eqref{1.15} can be reformulated as
\begin{equation}\label{2.1}
\left\{\begin{array}{l}
\partial_{t} n^++\operatorname{div}u^+=F_1, \\
\partial_{t}u^{+}+\beta_1\nabla n^++\beta_2\nabla
n^--\nu^+_1\Delta u^+-\nu^+_2\nabla\operatorname{div} u^+-\sigma^+\nabla\Delta n^+=F_2, \\
\partial_{t} n^-+\operatorname{div}u^-=F_3, \\
\partial_{t}u^{-}+\beta_3\nabla n^++\beta_4\nabla
n^--\nu^-_1\Delta u^--\nu^-_2\nabla\operatorname{div} u^--\sigma^-\nabla\Delta n^-=F_4, \\
(n^+, u^+, n^-, u^-)(x,0)=(n^+_0, u^+_0, n^-_0, u^-_0)(x)\rightarrow
(0, \vec{0}, 0, \vec{0}),~~~\hbox{as}~~~|x|\rightarrow +\infty,\\
\end{array}\right.
\end{equation}
where $\nu_{1}^{\pm}=\frac{\mu^{\pm}}{\bar{\rho}^{\pm}}$,
$\nu_{2}^{\pm}=\frac{\mu^{\pm}+\lambda^{\pm}}{\bar{\rho}^{\pm}}>0$,
$\beta_{1}=\frac{\mathcal{C}^{2}(1,1)
\bar{\rho}^{-}}{\bar{\rho}^{+}}$,
$\beta_{2}=\beta_{3}=\mathcal{C}^{2}(1,1)$,
$\beta_{4}=\frac{\mathcal{C}^{2}(1,1)
\bar{\rho}^{+}}{\bar{\rho}^{-}}$ (which imply
$\beta_{1}\beta_{4}=\beta_{2}\beta_{3}=\beta_{2}^2=\beta_{3}^2$),
 and
the nonlinear terms are given by
\begin{align}
\label{2.2}F_{1}=&-\operatorname{div}\left(n^{+} u^{+}\right), \\
F_{2}^{i}=&-g_+\left(n^{+}, n^{-}\right) \partial_{i}
n^{+}-\bar{g}_{+}\left(n^{+}, n^{-}\right) \partial_{i} n^{-}
-\left(u^{+} \cdot \nabla\right) u_{i}^{+} \nonumber\\
&+\mu^{+} h_{+}\left(n^{+}, n^{-}\right) \partial_{j}
n^{+} \partial_{j} u_{i}^{+}+\mu^{+} k_{+}\left(n^{+}, n^{-}\right) \partial_{j} n^{-} \partial_{j} u_{i}^{+} \nonumber\\
\label{2.3}&+\mu^{+} h_{+}\left(n^{+}, n^{-}\right) \partial_{j}
n^{+} \partial_{i} u_{j}^{+}+\mu^{+} k_{+}\left(n^{+}, n^{-}\right)
\partial_{j} n^{-}
 \partial_{i} u_{j}^{+}\\
&+\lambda^{+} h_{+}\left(n^{+}, n^{-}\right) \partial_{i} n^{+}
\partial_{j} u_{j}^{+}+\lambda^{+} k_{+}\left(n^{+},
n^{-}\right) \partial_{i} n^{-} \partial_{j} u_{j}^{+} \nonumber\\
&+\mu^{+} l_{+}\left(n^{+}, n^{-}\right) \partial_{j}^{2}
u_{i}^{+}+\left(\mu^{+}+\lambda^{+}\right) l_{+}\left(n^{+},
n^{-}\right) \partial_{i}
 \partial_{j} u_{j}^{+}, \nonumber\\
\label{2.4}F_{3}=&-\operatorname{div}\left(n^{-} u^{-}\right), \\
F_{4}^{i}=&-g_-\left(n^{+}, n^{-}\right) \partial_{i} n^{-}-
\bar{g}_{-}\left(n^{+}, n^{-}\right) \partial_{i} n^{+}-\left(u^{-} \cdot \nabla\right) u_{i}^{-}\nonumber \\
&+\mu^{-} h_{-}\left(n^{+}, n^{-}\right) \partial_{j} n^{+}
\partial_{j} u_{i}^{-}+\mu^{-} k_{-}\left(n^{+}, n^{-}\right)
\partial_{j} n^{-} \partial_{j} u_{i}^{-} \nonumber\\
\label{2.5}&+\mu^{-} h_{-}\left(n^{+}, n^{-}\right) \partial_{j}
n^{+}
\partial_{i} u_{j}^{-}+\mu^{-} k_{-}\left(n^{+}, n^{-}\right)
 \partial_{j} n^{-} \partial_{i} u_{j}^{-} \\
&+\lambda^{-} h_{-}\left(n^{+}, n^{-}\right) \partial_{i} n^{+}
\partial_{j} u_{j}^{-}+\lambda^{-} k_{-}\left(n^{+}, n^{-}\right)
 \partial_{i} n^{-} \partial_{j} u_{j}^{-}\nonumber \\
&+\mu^{-} l_{-}\left(n^{+}, n^{-}\right) \partial_{j}^{2}
u_{i}^{-}+\left(\mu^{-}+\lambda^{-}\right) l_{-}\left(n^{+},
n^{-}\right) \partial_{i} \partial_{j} u_{j}^{-},\nonumber
\end{align}

where
\begin{equation}\label{2.6}
\left\{\begin{array}{l}
g_{+}\left(n^{+}, n^{-}\right)=\frac{\left(\mathcal{C}^{2} \rho^{-}\right)\left(n^{+}+1, n^{-}+1\right)}{\rho^{+}\left(n^{+}+1, n^{-}+1\right)}-\frac{\left(\mathcal{C}^{2} \rho^{-}\right)(1,1)}{\rho^{+}(1,1)}, \\
g_{-}\left(n^{+}, n^{-}\right)=\frac{\left(\mathcal{C}^{2}
\rho^{+}\right) \left(n^{+}+1,
n^{-}+1\right)}{\rho^{-}\left(n^{+}+1, n^{-}+1\right)}-
\frac{\left(\mathcal{C}^{2} \rho^{+}\right)(1,1)}{\rho^{-}(1,1)},
\end{array}\right.\end{equation}

\begin{equation}\label{2.7}
\left\{\begin{array}{l} \bar{g}_{+}\left(n^{+},
n^{-}\right)=\mathcal{C}^{2}\left(n^{+}+1,
n^{-}+1\right)-\mathcal{C}^{2}\left(1, 1\right)\\
\bar{g}_{-}\left(n^{+}, n^{-}\right)=\mathcal{C}^{2}\left(n^{+}+1, n^{-}+1\right)-\mathcal{C}^{2}(1,1),\\
\end{array}\right.
\end{equation}

\begin{equation}\label{2.8}
\left\{\begin{array}{l}
h_{+}\left(n^{+}, n^{-}\right)=\frac{\left(\mathcal{C}^{2}\alpha^{-}\right)\left(n^{+}+1, n^{-}+1\right)}{(n^++1)s_{-}^{2}\left(n^{+}+1, n^{-}+1\right)},\\
h_{-}\left(n^{+}, n^{-}\right)=-\frac{\left(\mathcal{C}^{2}
\right)\left(n^{+}+1,
n^{-}+1\right)}{(\rho^-s_{-}^{2})\left(n^{+}+1, n^{-}+1\right)},
\end{array}\right.
\end{equation}

\begin{equation}\label{2.9}
\left\{\begin{array}{l}
k_{+}\left(n^{+}, n^{-}\right)=-\frac{\mathcal{C}^{2}\left(n^{+}+1, n^{-}+1\right)}{(n^++1)(s_{+}^{2}\rho^+)
\left(n^{+}+1, n^{-}+1\right)},\\
k_{-}\left(n^{+}, n^{-}\right)=-\frac{\left(\alpha^+\mathcal{C}^{2}\right)\left(n^{+}+1, n^{-}+1\right)}{(n^-+1)
s_{+}^{2}\left(n^{+}+1, n^{-}+1\right)},\\
\end{array}\right.
\end{equation}

\begin{equation}\label{2.10}
l_{\pm}(n^+, n^-)=\frac{1}{\rho_{\pm}\left(n^{+}+1,
n^{-}+1\right)}-\frac{1}{\rho_{\pm}\left(1, 1\right)}.
\end{equation}
\par Define
$\tilde{U}=(\tilde{n}^+,\tilde{u}^+,\tilde{n}^-,\tilde{u}^-)^t$. In
terms of the semigroup theory for evolutionary equation, we will
investigate the following initial value problem for the
corresponding linear system of \eqref{2.1}:
\begin{equation}
\begin{cases}
\tilde{U}_t=\mathcal B\tilde{U},\\
\tilde{U}\big|_{t=0}={U}_0,
\end{cases}   \label{2.11}
\end{equation}
where the operator $\mathcal B$ is given by
\begin{equation}\nonumber\mathcal B=\begin{pmatrix}
0&-\text{div}&0&0\\
-\beta_1\nabla+\sigma^+ \nabla\Delta&\nu_{1}^{+} \Delta+\nu_{2}^{+} \nabla \otimes \nabla&-\beta_2\nabla&0\\
0&0&0&-\text{div}\\
-\beta_3\nabla&0&-\beta_4\nabla+\sigma^- \nabla\Delta&\nu_{1}^{-}
\Delta+\nu_{2}^{-} \nabla \otimes \nabla
\end{pmatrix}.\end{equation}
Applying Fourier transform to the system \eqref{2.11}, one has
\begin{equation}
\begin{cases}
\widehat {\widetilde{{U}}}_t=\mathcal A(\xi)\widehat {\widetilde{U}},\\
\widehat {\widetilde{U}}\big|_{t=0}=\widehat U_0=(n^+_0, u^+_0,
n^-_0, u^-_0),
\end{cases}   \label{2.12}
\end{equation}
where $\widehat
{\widetilde{U}}(\xi,t)=\mathfrak{F}(\widetilde{U}(x,t))$,
$\xi=(\xi^1,\xi^2,\xi^3)^t$ and $\mathcal A(\xi)$ is defined by
\begin{equation}\nonumber\mathcal A(\xi)=\begin{pmatrix}
0&-{i}\xi^t&0&0\\
-{i}\beta_1\xi-{i}\sigma^+|\xi|^2\xi&-\nu_1^+|\xi|^2{\rm I}_{3\times 3}-\nu_2^+\xi\otimes\xi&-{i}\beta_2\xi&0\\
0&0&0&- {i}\xi^t\\
- {i}\beta_3\xi&0&-
{i}\beta_4-{i}\sigma^-|\xi|^2\xi&-\nu_1^-|\xi|^2{\rm I}_{3\times
3}-\nu_2^-\xi\otimes\xi
\end{pmatrix}.\end{equation}

To derive the linear time--decay estimates,  by using a real method
as in \cite{Kobayashi1, Kobayashi2}, one need to make a detailed
analysis on the properties of the semigroup. Unfortunately, it seems
untractable, since the system \eqref{2.12} has eight equations and
the matrix $\mathcal A(\xi)$ can not be diagonalizable (see
\cite{Sideris} pp.807 for example). To overcome this difficult, we
take Hodge decomposition to system \eqref{2.11} such that it can be
decoupled into three systems. One has four equations, and the other
two are classic heat equations. This key observation allows us to
derive the optimal linear convergence rates.

To begin with, let $\varphi^{\pm}=\Lambda^{-1}{\rm
div}\tilde{u}^{\pm}$ be the ``compressible part" of the velocities
$\tilde{u}^{\pm}$, and denote $\phi^{\pm}=\Lambda^{-1}{\rm
curl}\tilde{u}^{\pm}$ (with $({\rm curl} z)_i^j
=\partial_{x_j}z^i-\partial_{x_i}z^j$) by the ``incompressible part"
of the velocities $\tilde{u}^{\pm}$. Then, we can rewrite the system
\eqref{2.11} as follows:
\begin{equation}\label{2.13}
\begin{cases}
\partial_t{\tilde{{n}}^+}+\Lambda{\varphi^+}=0,\\
\partial_t{\varphi^+}-\beta_1\Lambda{\tilde{{n}}^+}-\beta_2\Lambda{\tilde{{n}}^-}+\nu^+\Lambda^2{\varphi^+}-\sigma^+\Lambda^3{\tilde{{n}}^+}=0,\\
\partial_t{\tilde{{n}}^-}+\Lambda{\varphi^-}=0,\\
\partial_t{\varphi^-}-\beta_3\Lambda{\tilde{{n}}^+}-\beta_4\Lambda{\tilde{{n}}^-}+\nu^-\Lambda^2{\varphi^-}-\sigma^-\Lambda^3{\tilde{{n}}^-}=0,\\
(\tilde{n}^+, \varphi^+, \tilde{n}^-, \varphi^-)\big|_{t=0}=({n}^+_0, \Lambda^{-1}{\rm div}{u}^{+}_0, {n}^-_0, \Lambda^{-1}{\rm div}{u}^{-}_0)(x),\\
\end{cases}
\end{equation}
and
\begin{equation}\label{2.14}
\begin{cases}
\partial_t\phi^++\nu^+_1\Lambda^2\phi^+=0,\\
\partial_t\phi^-+\nu^-_1\Lambda^2\phi^-=0,\\
(\phi^+,\phi^-)\big|_{t=0}=(\Lambda^{-1}{\rm curl}{u}^{+}_0,
\Lambda^{-1}{\rm curl}{u}^{-}_0)(x),
\end{cases}
\end{equation}
where $\nu^{\pm}=\nu^{\pm}_1+\nu^{\pm}_2$.

\subsection{Spectral analysis and linear $L^2$ estimates}

 In view of the semigroup
theory, we may represent the IVP \eqref{2.13} for $\mathcal
U=(\tilde{n}^+, \varphi^+, \tilde{n}^-, \varphi^-)^t$ as
\begin{equation}
\begin{cases}
\mathcal U_t=\mathcal B_1\mathcal U,\\
\mathcal U\big|_{t=0}=\mathcal U_0,
\end{cases}   \label{2.15}
\end{equation}
where the operator $\mathcal B_1$ is defined by
\begin{equation}\nonumber\mathcal B_1=\begin{pmatrix}
0&-\Lambda&0&0\\
\beta_1\Lambda+\sigma^+\Lambda^3&-\nu^+\Lambda^2&\beta_2\Lambda&0\\
0&0&0&-\Lambda\\
\beta_3\Lambda&0&\beta_4\Lambda+\sigma^-\Lambda^3&-\nu^-\Lambda^2
\end{pmatrix}.\end{equation}
Taking Fourier transform to system \eqref{2.15}, we obtain
\begin{equation}
\begin{cases}
\widehat {\mathcal U}_t=\mathcal A_1(\xi)\widehat {\mathcal U},\\
\widehat {\mathcal U}\big|_{t=0}=\widehat {\mathcal U}_0,
\end{cases}   \label{2.16}
\end{equation}
where $\widehat {\mathcal U}(\xi,t)=\mathfrak{F}({\mathcal U}(x,t))$
and $\mathcal A_1(\xi)$ is given by
\begin{equation}\label{2.17}\mathcal A_1(\xi)=\begin{pmatrix}
0&-|\xi|&0&0\\
\beta_1|\xi|+\sigma^+|\xi|^3&-\nu^+|\xi|^2&\beta_2|\xi|&0\\
0&0&0&-|\xi|\\
\beta_3|\xi|&0&\beta_4|\xi|+\sigma^-|\xi|^3&-\nu^-|\xi|^2
\end{pmatrix}.\end{equation}
We compute the eigenvalues of matrix $\mathcal A_1(\xi)$ from the
determinant
\begin{equation}\begin{split}\nonumber&{\rm det}(\lambda{\rm I}-\mathcal A_1(\xi))\\
&=\lambda^4+(\nu^+|\xi|^2+\nu^-|\xi|^2)\lambda^3+(\beta_1|\xi|^2+\beta_4|\xi|^2+\sigma^+|\xi|^4+\sigma^-|\xi|^4
+\nu^+\nu^-|\xi|^4)\lambda^2\\&\quad+(\beta_1\nu^-|\xi|^4+\beta_4\nu^+|\xi|^4+\nu^+\sigma^-|\xi|^6+\nu^-\sigma^+|\xi|^6)\lambda\\
&\quad+\beta_1\sigma^-|\xi|^6+\beta_4\sigma^+|\xi|^6+\sigma^+\sigma^-|\xi|^8\\
&=0,
\end{split}\end{equation}
which implies that matrix $\mathcal A_1(\xi)$ possesses four
eigenvalues:
\begin{equation}\label{2.18}\begin{cases}\lambda_1=-\frac{\beta_1\nu^++\beta_4\nu^-}{2(\beta_1+\beta_4)}|\xi|^2+\mathrm{i}\sqrt{\beta_1+\beta_4}|\xi|+\mathcal O(|\xi|^3),\\
\lambda_2=-\frac{\beta_1\nu^++\beta_4\nu^-}{2(\beta_1+\beta_4)}|\xi|^2-\mathrm{i}\sqrt{\beta_1+\beta_4}|\xi|+\mathcal O(|\xi|^3),\\
\lambda_3=\frac{-(\beta_1\nu^-+\beta_4\nu^+)+\sqrt{(\beta_1\nu^-+\beta_4\nu^+)^2-4(\beta_1+\beta_4)(\beta_1\sigma^-+\beta_4\sigma^+)}}{2(\beta_1+\beta_4)}|\xi|^2+\mathcal O(|\xi|^4),\\
\lambda_4=\frac{-(\beta_1\nu^-+\beta_4\nu^+)-\sqrt{(\beta_1\nu^-+\beta_4\nu^+)^2-4(\beta_1+\beta_4)(\beta_1\sigma^-+\beta_4\sigma^+)}}{2(\beta_1+\beta_4)}|\xi|^2+\mathcal O(|\xi|^4),\\
\end{cases}\end{equation}
Set $\mathcal
R=\sqrt{(\beta_1\nu^-+\beta_4\nu^+)^2-4(\beta_1+\beta_4)(\beta_1\sigma^-+\beta_4\sigma^+)}$,
$\tilde \lambda_3=\frac{-(\beta_1\nu^-+\beta_4\nu^+)+\mathcal
R}{2(\beta_1+\beta_4)}$, $\tilde
\lambda_4=\frac{-(\beta_1\nu^-+\beta_4\nu^+)-\mathcal{R}}{2(\beta_1+\beta_4)}$.\par
In what follows, we will divide the issue into two cases and discuss
them respectively.

 {\color{blue}Case 1: $\lambda_3\neq \lambda_4$.}
In this case, we have $P^{-1}\mathcal A_1(\xi)P=\begin{pmatrix}
\lambda_1&0&0&0\\
0&\lambda_2&0&0\\
0&0&\lambda_3&0\\
0&0&0&\lambda_4
\end{pmatrix}.$
Next, we derive the expressions of the project operators $P_i$ for
$i=1,2,3,4$. By making tedious calculations, it is clear that the
semigroup $\text{e}^{t\mathcal A_1}$ can be expressed as
\begin{align}\begin{split}\label{2.19}
 \text{e}^{t\mathcal A_1(\xi)}&=\text{e}^{\lambda_1t}P\begin{pmatrix}
1&0&0&0\\
0&0&0&0\\
0&0&0&0\\
0&0&0&0
\end{pmatrix}P^{-1}+\text{e}^{\lambda_2t}P\begin{pmatrix}
0&0&0&0\\
0&1&0&0\\
0&0&0&0\\
0&0&0&0
\end{pmatrix}P^{-1}\\
&\quad+\text{e}^{\lambda_3t}P\begin{pmatrix}
0&0&0&0\\
0&0&0&0\\
0&0&1&0\\
0&0&0&0
\end{pmatrix}P^{-1}+\text{e}^{\lambda_4t}P\begin{pmatrix}
0&0&0&0\\
0&0&0&0\\
0&0&0&0\\
0&0&0&1
\end{pmatrix}P^{-1}\\
&:=\sum_{i=1}^4\text{e}^{\lambda_it}P_i(\xi), \end{split}\end{align}
where the projectors $P_i(\xi)$ can be computed as
\begin{equation}\begin{split}\label{2.20}P_1(\xi)=&P\begin{pmatrix}
1&0&0&0\\
0&0&0&0\\
0&0&0&0\\
0&0&0&0
\end{pmatrix}P^{-1}\\=&\frac{1}{(\lambda_2-\lambda_1)(\lambda_3-\lambda_1)(\lambda_4-\lambda_1)}P(\lambda_2I-J)(\lambda_3I-J)(\lambda_4I-J)P^{-1}
\\=&\frac{1}{(\lambda_2-\lambda_1)(\lambda_3-\lambda_1)(\lambda_4-\lambda_1)}P(\lambda_2I-J)P^{-1}P(\lambda_3I-J)P^{-1}P(\lambda_4I-J)P^{-1}
\\=&\frac{(\lambda_2I-\mathcal{A}_1(\xi))(\lambda_3I-\mathcal{A}_1(\xi))(\lambda_4I-\mathcal{A}_1(\xi))}{(\lambda_2-\lambda_1)(\lambda_3-\lambda_1)
(\lambda_4-\lambda_1)},
\end{split}\end{equation}
\begin{equation}\begin{split}\label{2.21}P_2(\xi)=&P\begin{pmatrix}
0&0&0&0\\
0&1&0&0\\
0&0&0&0\\
0&0&0&0
\end{pmatrix}P^{-1}\\=&\frac{1}{(\lambda_1-\lambda_2)(\lambda_3-\lambda_2)(\lambda_4-\lambda_2)}P(\lambda_1I-J)(\lambda_3I-J)
(\lambda_4I-J)P^{-1}
\\=&\frac{1}{(\lambda_1-\lambda_2)(\lambda_3-\lambda_2)(\lambda_4-\lambda_2)}P(\lambda_1I-J)P^{-1}P(\lambda_3I-J)P^{-1}P(\lambda_4I-J)P^{-1}
\\=&\frac{(\lambda_1I-\mathcal{A}_1(\xi))(\lambda_3I-\mathcal{A}_1(\xi))(\lambda_4I-\mathcal{A}_1(\xi))}
{(\lambda_1-\lambda_2)(\lambda_3-\lambda_2)(\lambda_4-\lambda_2)},
\end{split}\end{equation}
\begin{align}\label{2.22}P_3(\xi)=&P\begin{pmatrix}
0&0&0&0\\
0&0&0&0\\
0&0&1&0\\
0&0&0&0
\end{pmatrix}P^{-1}=\frac{1}{(\lambda_1-\lambda_3)(\lambda_2-\lambda_3)(\lambda_4-\lambda_3)}P(\lambda_1I-J)(\lambda_2I-J)
(\lambda_4I-J)P^{-1}\notag\\
=&\frac{1}{(\lambda_1-\lambda_3)(\lambda_2-\lambda_3)(\lambda_4-\lambda_3)}P(\lambda_1I-J)P^{-1}P(\lambda_2I-J)P^{-1}P(\lambda_4I-J)P^{-1}
\\=&\frac{(\lambda_1I-\mathcal{A}_1(\xi))(\lambda_2I-\mathcal{A}_1(\xi))(\lambda_4I-\mathcal{A}_1(\xi))}{(\lambda_1-\lambda_3)
(\lambda_2-\lambda_3)(\lambda_4-\lambda_3)},\notag
\end{align}
\begin{equation}\begin{split}\label{2.23}P_4(\xi)=&P\begin{pmatrix}
0&0&0&0\\
0&0&0&0\\
0&0&0&0\\
0&0&0&1
\end{pmatrix}P^{-1}\\=&\frac{1}{(\lambda_1-\lambda_4)(\lambda_2-\lambda_4)(\lambda_3-\lambda_4)}P(\lambda_1I-J)
(\lambda_2I-J)(\lambda_3I-J)P^{-1}
\\=&\frac{1}{(\lambda_1-\lambda_4)(\lambda_2-\lambda_4)(\lambda_3-\lambda_4)}P(\lambda_1I-J)P^{-1}P(\lambda_2I-J)P^{-1}P(\lambda_3I-J)P^{-1}
\\=&\frac{(\lambda_1I-\mathcal{A}_1(\xi))(\lambda_2I-\mathcal{A}_1(\xi))(\lambda_3I-\mathcal{A}_1(\xi))}
{(\lambda_1-\lambda_4)(\lambda_2-\lambda_4)(\lambda_3-\lambda_4)}.
\end{split}\end{equation}
Consequently, we can represent the solution of IVP \eqref{2.16} as
\begin{equation}
\widehat {\mathcal U}(\xi,t)=\text{e}^{t\mathcal A_1(\xi)}\widehat
{\mathcal U}_0(\xi)=\left(\sum_{i=1}^4
\text{e}^{r_it}P_i(\xi)\right)\widehat {\mathcal
U}_0(\xi),\label{2.24}
\end{equation}
where
\begin{equation}\label{2.25}P_1(\xi)=\begin{pmatrix}
\frac{\beta_1}{2(\beta_1+\beta_4)}&\frac{\beta_1}{2(\beta_1+\beta_4)^\frac{3}{2}}\mathrm{i}&\frac{\beta_2}{2(\beta_1+\beta_4)}
&\frac{\beta_2}{2(\beta_1+\beta_4)^\frac{3}{2}}\mathrm{i}\\
-\frac{\beta_1}{2(\beta_1+\beta_4)^\frac{1}{2}}\mathrm{i}&\frac{\beta_1}{2(\beta_1+\beta_4)}&-\frac{\beta_2}{2(\beta_1+\beta_4)^\frac{1}{2}}\mathrm{i}&
\frac{\beta_2}{2(\beta_1+\beta_4)}\\
\frac{\beta_2}{2(\beta_1+\beta_4)}&\frac{\beta_2}{2(\beta_1+\beta_4)^\frac{3}{2}}\mathrm{i}&\frac{\beta_4}{2(\beta_1+\beta_4)}
&\frac{\beta_4}{2(\beta_1+\beta_4)^\frac{3}{2}}\mathrm{i}\\
-\frac{\beta_2}{2(\beta_1+\beta_4)^\frac{1}{2}}\mathrm{i}&\frac{\beta_2}{2(\beta_1+\beta_4)}&-\frac{\beta_4}{2(\beta_1+\beta_4)^\frac{1}{2}}\mathrm{i}&
\frac{\beta_4}{2(\beta_1+\beta_4)}
\end{pmatrix}+\mathcal O(|\xi|),\end{equation}
\begin{equation}\label{2.26}P_2(\xi)=\begin{pmatrix}
\frac{\beta_1}{2(\beta_1+\beta_4)}&-\frac{\beta_1}{2(\beta_1+\beta_4)^\frac{3}{2}}\mathrm{i}&\frac{\beta_2}{2(\beta_1+\beta_4)}
&-\frac{\beta_2}{2(\beta_1+\beta_4)^\frac{3}{2}}\mathrm{i}\\
\frac{\beta_1}{2(\beta_1+\beta_4)^\frac{1}{2}}\mathrm{i}&\frac{\beta_1}{2(\beta_1+\beta_4)}&\frac{\beta_2}{2(\beta_1+\beta_4)^\frac{1}{2}}\mathrm{i}&
\frac{\beta_2}{2(\beta_1+\beta_4)}\\
\frac{\beta_2}{2(\beta_1+\beta_4)}&-\frac{\beta_2}{2(\beta_1+\beta_4)^\frac{3}{2}}\mathrm{i}&\frac{\beta_4}{2(\beta_1+\beta_4)}
&-\frac{\beta_4}{2(\beta_1+\beta_4)^\frac{3}{2}}\mathrm{i}\\
\frac{\beta_2}{2(\beta_1+\beta_4)^\frac{1}{2}}\mathrm{i}&\frac{\beta_2}{2(\beta_1+\beta_4)}&\frac{\beta_4}{2(\beta_1+\beta_4)^\frac{1}{2}}\mathrm{i}&
\frac{\beta_4}{2(\beta_1+\beta_4)}
\end{pmatrix}+\mathcal O(|\xi|),\end{equation}
\begin{equation}\begin{split}\label{2.27}P_3(\xi)=&\begin{pmatrix}
\frac{\beta_1\beta_4(\nu^+-\nu^-)}{(\beta_1+\beta_4)\mathcal
R}-\frac{\beta_4\tilde r_4}{\mathcal R}&-\frac{\beta_4}{\mathcal
R|\xi|}&\frac{\beta_2\beta_4(\nu^+-\nu^-)}{(\beta_1+\beta_4)\mathcal
R}+\frac{\beta_2\tilde \lambda_4}{\mathcal R}
&\frac{\beta_2}{\mathcal R|\xi|}\\
0&-\frac{\beta_4(\beta_4\nu^++\beta_1\nu^-)}{(\beta_1+\beta_4)\mathcal
R}-\frac{\beta_4\tilde \lambda_4}{\mathcal R}&0&
\frac{\beta_2(\beta_4\nu^++\beta_1\nu^-)}{(\beta_1+\beta_4)\mathcal R}+\frac{\beta_2\tilde r_4}{\mathcal R}\\
\frac{\beta_1\beta_2(\nu^--\nu^+)}{(\beta_1+\beta_4)\mathcal
R}+\frac{\beta_2\tilde \lambda_4}{\mathcal
R}&\frac{\beta_2}{\mathcal
R|\xi|}&\frac{\beta_1\beta_4(\nu^--\nu^+)}{(\beta_1+\beta_4)\mathcal
R}-\frac{\beta_1\tilde \lambda_4}{\mathcal R}
&-\frac{\beta_1}{\mathcal R|\xi|}\\
0&\frac{\beta_2(\beta_4\nu^++\beta_1\nu^-)}{(\beta_1+\beta_4)\mathcal
R}+\frac{\beta_2\tilde \lambda_4}{\mathcal R}&0&
-\frac{\beta_1(\beta_4\nu^++\beta_1\nu^-)}{(\beta_1+\beta_4)\mathcal
R}-\frac{\beta_1\tilde \lambda_4}{\mathcal R}
\end{pmatrix}\\
&+\mathcal O(|\xi|),\end{split}\end{equation}
\begin{equation}\begin{split}\label{2.28}P_4(\xi)=&-\begin{pmatrix}
\frac{\beta_1\beta_4(\nu^+-\nu^-)}{(\beta_1+\beta_4)\mathcal
R}-\frac{\beta_4\tilde \lambda_3}{\mathcal
R}&-\frac{\beta_4}{\mathcal
R|\xi|}&\frac{\beta_2\beta_4(\nu^+-\nu^-)}{(\beta_1+\beta_4)\mathcal
R}+\frac{\beta_2\tilde \lambda_3}{\mathcal R}
&\frac{\beta_2}{\mathcal R|\xi|}\\
0&-\frac{\beta_4(\beta_4\nu^++\beta_1\nu^-)}{(\beta_1+\beta_4)\mathcal
R}-\frac{\beta_4\tilde \lambda_3}{\mathcal R}&0&
\frac{\beta_2(\beta_4\nu^++\beta_1\nu^-)}{(\beta_1+\beta_4)\mathcal R}+\frac{\beta_2\tilde \lambda_3}{\mathcal R}\\
\frac{\beta_1\beta_2(\nu^--\nu^+)}{(\beta_1+\beta_4)\mathcal
R}+\frac{\beta_2\tilde \lambda_3}{\mathcal
R}&\frac{\beta_2}{\mathcal
R|\xi|}&\frac{\beta_1\beta_4(\nu^--\nu^+)}{(\beta_1+\beta_4)\mathcal
R}-\frac{\beta_1\tilde \lambda_3}{\mathcal R}
&-\frac{\beta_1}{\mathcal R|\xi|}\\
0&\frac{\beta_2(\beta_4\nu^++\beta_1\nu^-)}{(\beta_1+\beta_4)\mathcal
R}+\frac{\beta_2\tilde \lambda_3}{\mathcal R}&0&
-\frac{\beta_1(\beta_4\nu^++\beta_1\nu^-)}{(\beta_1+\beta_4)\mathcal
R}-\frac{\beta_1\tilde \lambda_3}{\mathcal R}
\end{pmatrix}\\
&+\mathcal O(|\xi|).\end{split}\end{equation}

{\color{blue}Case 2: $\lambda_3=\lambda_4$.} In this case,
 we have $P^{-1}\mathcal A_1P=\begin{pmatrix}
\lambda_1&0&0&0\\
0&\lambda_2&0&0\\
0&0&\lambda_3&1\\
0&0&0&\lambda_3
\end{pmatrix}=J.$
Compared to {\color{blue}Case 1}, where the matrix $\mathcal
A_1(\xi)$ can be diagonalizable, it is rather difficult to get the
expression of the solution in terms of the solution semigroup as in
\eqref{2.24} due to the appearance of the multiple roots. The key
idea here is that we first conjecture
 that a similar formula as
\eqref{2.24} holds, and then derive the explicit expressions of
$P_i(\xi)$ for $i=1,2, 3, 4.$ To see this, we first notice that
\begin{equation}\nonumber
\begin{cases}
\widehat {\mathcal U}_t(\xi, t)=PJP^{-1}\widehat {\mathcal U}(\xi, t),\\
\widehat {\mathcal U}(\xi, 0)=\widehat {\mathcal U}_0(\xi),
\end{cases}
\end{equation}
which implies
\begin{equation}\label{2.29}\widehat {\mathcal U}(\xi, t)=P\begin{pmatrix}
\text{e}^{\lambda_1t}&0&0&0\\
0&\text{e}^{\lambda_2t}&0&0\\
0&0&\text{e}^{\lambda_3t}&t\text{e}^{\lambda_3t}\\
0&0&0&\text{e}^{\lambda_3t}
\end{pmatrix}P^{-1}\widehat {\mathcal U}_0(\xi).
\end{equation}
Inspired by {\color{blue} Case 1}, we obtain that the following
decomposition holds:
\begin{equation}\begin{split}\label{2.30}\widehat {\mathcal U}(\xi, t)=&P\begin{pmatrix}
\text{e}^{\lambda_1t}&0&0&0\\
0&\text{e}^{\lambda_2t}&0&0\\
0&0&\text{e}^{\lambda_3t}&t\text{e}^{\lambda_3t}\\
0&0&0&\text{e}^{\lambda_3t}
\end{pmatrix}P^{-1}\widehat {\mathcal
U}_0(\xi)\\=&\left(\text{e}^{\lambda_1t}P_1(\xi)+\text{e}^{\lambda_2t}P_2(\xi)+\text{e}^{\lambda_3t}P_3(\xi)+t\text{e}^{\lambda_3t}
P_4(\xi)\right)\widehat {\mathcal U}_0(\xi).
\end{split}\end{equation}
Then, after careful and sophisticated calculations, we find the
explicit expressions of $P_i(\xi)$ for $i=1,2, 3, 4,$  which are
given by
\begin{equation}\begin{split}\label{2.31}P_1(\xi)=&P\begin{pmatrix}
1&0&0&0\\
0&0&0&0\\
0&0&0&0\\
0&0&0&0
\end{pmatrix}P^{-1}\\=&\frac{1}{(\lambda_2-\lambda_1)(\lambda_3-\lambda_1)^2}P(\lambda_2I-J)(\lambda_3I-J)^2P^{-1}
\\=&\frac{1}{(\lambda_2-\lambda_1)(\lambda_3-\lambda_1)^2}P(\lambda_2I-J)P^{-1}P(\lambda_3I-J)P^{-1}P(\lambda_3I-J)P^{-1}
\\=&\frac{(r_2I-A)(r_3I-A)^2}{(r_2-r_1)(r_3-r_1)^2}\\=&\begin{pmatrix}
\frac{\beta_1}{2(\beta_1+\beta_4)}&\frac{\beta_1}{2(\beta_1+\beta_4)^\frac{3}{2}}\mathrm{i}&\frac{\beta_2}{2(\beta_1+\beta_4)}
&\frac{\beta_2}{2(\beta_1+\beta_4)^\frac{3}{2}}\mathrm{i}\\
-\frac{\beta_1}{2(\beta_1+\beta_4)^\frac{1}{2}}\mathrm{i}&\frac{\beta_1}{2(\beta_1+\beta_4)}&-\frac{\beta_2}{2(\beta_1+\beta_4)^\frac{1}{2}}\mathrm{i}&
\frac{\beta_2}{2(\beta_1+\beta_4)}\\
\frac{\beta_2}{2(\beta_1+\beta_4)}&\frac{\beta_2}{2(\beta_1+\beta_4)^\frac{3}{2}}\mathrm{i}&\frac{\beta_4}{2(\beta_1+\beta_4)}
&\frac{\beta_4}{2(\beta_1+\beta_4)^\frac{3}{2}}\mathrm{i}\\
-\frac{\beta_2}{2(\beta_1+\beta_4)^\frac{1}{2}}\mathrm{i}&\frac{\beta_2}{2(\beta_1+\beta_4)}&-\frac{\beta_4}{2(\beta_1+\beta_4)^\frac{1}{2}}\mathrm{i}&
\frac{\beta_4}{2(\beta_1+\beta_4)}
\end{pmatrix}+\mathcal O(|\xi|),
\end{split}\end{equation}

\begin{equation}\begin{split}\label{2.32}
P_2(\xi)=&P\begin{pmatrix}
0&0&0&0\\
0&1&0&0\\
0&0&0&0\\
0&0&0&0
\end{pmatrix}P^{-1}=\frac{1}{(\lambda_1-\lambda_2)(\lambda_3-\lambda_2)^2}P(\lambda_1I-J)(\lambda_3I-J)^2P^{-1}
\\=&\frac{1}{(\lambda_1-\lambda_2)(\lambda_3-\lambda_2)^2}P(\lambda_1I-J)P^{-1}P(\lambda_3I-J)P^{-1}
P(\lambda_3I-J)P^{-1}=\frac{(\lambda_1I-A)(\lambda_3I-A)^2}{(\lambda_1-\lambda_2)(\lambda_3-\lambda_2)^2}\\=&\begin{pmatrix}
\frac{\beta_1}{2(\beta_1+\beta_4)}&-\frac{\beta_1}{2(\beta_1+\beta_4)^\frac{3}{2}}\mathrm{i}&\frac{\beta_2}{2(\beta_1+\beta_4)}
&-\frac{\beta_2}{2(\beta_1+\beta_4)^\frac{3}{2}}\mathrm{i}\\
\frac{\beta_1}{2(\beta_1+\beta_4)^\frac{1}{2}}\mathrm{i}&\frac{\beta_1}{2(\beta_1+\beta_4)}&\frac{\beta_2}{2(\beta_1+\beta_4)^\frac{1}{2}}\mathrm{i}&
\frac{\beta_2}{2(\beta_1+\beta_4)}\\
\frac{\beta_2}{2(\beta_1+\beta_4)}&-\frac{\beta_2}{2(\beta_1+\beta_4)^\frac{3}{2}}\mathrm{i}&\frac{\beta_4}{2(\beta_1+\beta_4)}
&-\frac{\beta_4}{2(\beta_1+\beta_4)^\frac{3}{2}}\mathrm{i}\\
\frac{\beta_2}{2(\beta_1+\beta_4)^\frac{1}{2}}\mathrm{i}&\frac{\beta_2}{2(\beta_1+\beta_4)}&\frac{\beta_4}{2(\beta_1+\beta_4)^\frac{1}{2}}\mathrm{i}&
\frac{\beta_4}{2(\beta_1+\beta_4)}
\end{pmatrix}+\mathcal O(|\xi|),
\end{split}\end{equation}
\begin{equation}\begin{split}\label{2.33}P_4(\xi)=&P\begin{pmatrix}
0&0&0&0\\
0&0&0&0\\
0&0&0&1\\
0&0&0&0
\end{pmatrix}P^{-1}=-\frac{1}{(\lambda_1-\lambda_3)(\lambda_2-\lambda_3)}P(\lambda_1I-J)(\lambda_2I-J)(\lambda_3I-J)P^{-1}
\\=&
-\frac{(\lambda_1I-A)(\lambda_2I-A)(\lambda_3I-A)}{(\lambda_1-\lambda_3)(\lambda_2-\lambda_3)}\\=&\begin{pmatrix}
\frac{\beta_1\beta_4(\nu^+-\nu^-)}{2(\beta_1+\beta_4)^2}+\frac{\beta_4\nu^+}{2(\beta_1+\beta_4)}&-\frac{\beta_4}{(\beta_1+\beta_4)|\xi|}&\frac{\beta_2\beta_4(\nu^+-\nu^-)}{2(\beta_1+\beta_4)^2}
-\frac{\beta_2\nu^-}{2(\beta_1+\beta_4)}
&\frac{\beta_2}{(\beta_1+\beta_4)|\xi|}\\
0&-\frac{\beta_4(\beta_4\nu^++\beta_1\nu^-)}{2(\beta_1+\beta_4)^2}&0&
\frac{\beta_2(\beta_4\nu^++\beta_1\nu^-)}{2(\beta_1+\beta_4)^2}\\
\frac{\beta_1\beta_2(\nu^--\nu^+)}{2(\beta_1+\beta_4)^2}-\frac{\beta_2\nu^+}{2(\beta_1+\beta_4)}&\frac{\beta_2}{(\beta_1+\beta_4)|\xi|}
&\frac{\beta_1\beta_4(\nu^--\nu^+)}{2(\beta_1+\beta_4)^2}+\frac{\beta_1\nu^-}{2(\beta_1+\beta_4)}
&-\frac{\beta_1}{(\beta_1+\beta_4)|\xi|}\\
0&\frac{\beta_2(\beta_4\nu^++\beta_1\nu^-)}{2(\beta_1+\beta_4)^2}&0&
-\frac{\beta_1(\beta_4\nu^++\beta_1\nu^-)}{2(\beta_1+\beta_4)^2}
\end{pmatrix}|\xi|^2\\
&\quad+\mathcal O(|\xi|^3),
\end{split}\end{equation}

\begin{align}\label{2.34}P_3(\xi)=&P\begin{pmatrix}
0&0&0&0\\
0&0&0&0\\
0&0&1&0\\
0&0&0&1
\end{pmatrix}P^{-1}\notag\\=&\frac{1}{(\lambda_1-\lambda_3)(\lambda_2-\lambda_3)}P\left[(\lambda_1I-J)(\lambda_2I-J)+(\lambda_1+\lambda_2-2\lambda_3)
\begin{pmatrix}
0&0&0&0\\
0&0&0&0\\
0&0&0&1\\
0&0&0&0
\end{pmatrix}\right]P^{-1}\notag
\\=&\frac{(\lambda_1I-A)(\lambda_2I-A)}{(\lambda_1-\lambda_3)(\lambda_2-\lambda_3)}
+\frac{\lambda_1+\lambda_2-2\lambda_3}{(\lambda_1-\lambda_3)(\lambda_2-\lambda_3)}P_4(\xi)\notag\\=&\begin{pmatrix}
\frac{\beta_4}{\beta_1+\beta_4}&0&-\frac{\beta_2}{\beta_1+\beta_4}
&0\\
0&\frac{\beta_4}{\beta_1+\beta_4}&0&
-\frac{\beta_2}{\beta_1+\beta_4}\\
-\frac{\beta_2}{\beta_1+\beta_4}&0 &\frac{\beta_1}{\beta_1+\beta_4}
&0\\
0&-\frac{\beta_2}{\beta_1+\beta_4}&0&
\frac{\beta_1}{\beta_1+\beta_4}
\end{pmatrix}+\mathcal O(|\xi|).
\end{align}

By virtue of \eqref{2.24}-\eqref{2.28} and
\eqref{2.30}-\eqref{2.34}, we can establish the following estimates
on low-frequency part of the solution $\widehat {\mathcal U}(\xi,
t)$ to the IVP \eqref{2.16}.

\begin{Lemma}\label{Lemma2.1} Let $\bar{\nu}=\min\Big\{\frac{\beta_1\nu^++\beta_4\nu^-}{2(\beta_1+\beta_4)},-\tilde{\lambda}_3\Big\}$
if $\mathcal R$ is a real number, and
$\bar{\nu}=\min\Big\{\frac{\beta_1\nu^++\beta_4\nu^-}{2(\beta_1+\beta_4)},\frac{\beta_1\nu^-+\beta_4\nu^+}{2(\beta_1+\beta_4)}\Big\}$
if $\mathcal R$ is an imaginary number, then there exists a
sufficiently small positive constant $\eta$, such that the following
estimates hold
\begin{equation}\label{2.35}
\displaystyle \left|\widehat{\tilde{n}^+}\right|,
\left|\widehat{\tilde{n}^-}\right|\lesssim
\frac{\text{e}^{-\bar{\nu}|\xi|^2t}}{|\xi|}\left(
\left|\widehat{n^+_0}\right|
 +\left|\widehat{\varphi^+_0}\right|+ \left|\widehat{n^-_0}\right|+
 \left|\widehat{\varphi^-_0}\right|\right)+t\text{e}^{-\bar{\nu}|\xi|^2t}\left(\left|\widehat{n^+_0}\right|
 +\left|\widehat{\varphi^+_0}\right|+ \left|\widehat{n^-_0}\right|+\left|\widehat{\varphi^-_0}\right|\right)
\end{equation}
\begin{equation}\label{2.36}
 \displaystyle\left|\widehat{\varphi^+}\right|, \left|\widehat{\varphi^-}\right|, \left|\beta_1
 \widehat{\tilde{n}^+}+\beta_2\widehat{\tilde{n}^-}\right|\lesssim \text{e}^{-\bar{\nu}|\xi|^2t}
 \left(\left|\widehat{n^+_0}\right|+\left|\widehat{\varphi^+_0}\right|+\left|\widehat{n^-_0}\right|+
 \left|\widehat{\varphi^-_0}\right|\right),
\end{equation}
for any $|\xi|\le \eta$.
\end{Lemma}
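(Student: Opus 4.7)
The plan is to establish both estimates \eqref{2.35}--\eqref{2.36} directly from the explicit spectral decompositions of the semigroup $e^{t\mathcal{A}_1(\xi)}$ given by \eqref{2.24} (when $\lambda_3\neq\lambda_4$) and \eqref{2.30} (when $\lambda_3=\lambda_4$), together with the explicit matrix formulas for the projectors $P_i(\xi)$ obtained just above. The argument reduces to three ingredients: (a) exponential decay of each $e^{\lambda_i t}$ on the low-frequency ball; (b) an entrywise inspection of the matrices $P_i(\xi)$ to locate the singular contributions in $|\xi|$; (c) an algebraic cancellation stemming from the identity $\beta_1\beta_4=\beta_2^2$ recorded below \eqref{2.1}.

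For (a), the expansions \eqref{2.18} give $\mathrm{Re}\,\lambda_1=\mathrm{Re}\,\lambda_2=-\frac{\beta_1\nu^++\beta_4\nu^-}{2(\beta_1+\beta_4)}|\xi|^2+\mathcal{O}(|\xi|^4)$, while $\mathrm{Re}\,\lambda_3,\mathrm{Re}\,\lambda_4$ are dominated by $\tilde\lambda_3|\xi|^2+\mathcal{O}(|\xi|^4)$ when $\mathcal{R}$ is real (with $\tilde\lambda_3<0$ thanks to $\sigma^\pm>0$) and by $-\frac{\beta_1\nu^-+\beta_4\nu^+}{2(\beta_1+\beta_4)}|\xi|^2+\mathcal{O}(|\xi|^4)$ when $\mathcal{R}$ is imaginary. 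Taking $\eta$ small enough to absorb the $\mathcal{O}(|\xi|^4)$ remainders yields $|e^{\lambda_i t}|\leq Ce^{-\bar\nu|\xi|^2 t}$ uniformly for $|\xi|\leq\eta$, with $\bar\nu$ as in the lemma.

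For (b)--(c) in the distinct-eigenvalue case, the projectors $P_1,P_2$ in \eqref{2.25}--\eqref{2.26} are uniformly bounded on $\{|\xi|\leq\eta\}$, so their contribution to every component of $\widehat{\mathcal{U}}(\xi,t)$ is controlled by $e^{-\bar\nu|\xi|^2 t}|\widehat{\mathcal{U}}_0(\xi)|$. The only $|\xi|^{-1}$ singular entries of $P_3,P_4$ in \eqref{2.27}--\eqref{2.28} sit in positions $(1,2),(1,4),(3,2),(3,4)$, so they produce exactly the $|\xi|^{-1}e^{-\bar\nu|\xi|^2 t}$ bound for $\widehat{\tilde n^\pm}$ in \eqref{2.35}. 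The sharper bound \eqref{2.36} on $\widehat{\varphi^\pm}$ is then immediate because rows $2$ and $4$ of $P_3,P_4$ contain no such singular entries. For the combination $\beta_1\widehat{\tilde n^+}+\beta_2\widehat{\tilde n^-}$ the $|\xi|^{-1}$ contributions in column $2$ combine as $\beta_1\cdot(-\beta_4/(\mathcal{R}|\xi|))+\beta_2\cdot(\beta_2/(\mathcal{R}|\xi|))=(\beta_2^2-\beta_1\beta_4)/(\mathcal{R}|\xi|)=0$, and analogously for column $4$, so only the bounded parts of $P_3,P_4$ survive and the estimate in \eqref{2.36} follows.

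The degenerate case $\lambda_3=\lambda_4$ is handled similarly via \eqref{2.31}--\eqref{2.34}. The projectors $P_1,P_2,P_3$ are uniformly bounded on $\{|\xi|\leq\eta\}$ and contribute at most $e^{-\bar\nu|\xi|^2 t}|\widehat{\mathcal{U}}_0|$, while the overall prefactor $|\xi|^2$ in \eqref{2.33} means that $te^{\lambda_3 t}P_4(\xi)$ contributes at worst $t|\xi|\,e^{-\bar\nu|\xi|^2 t}|\widehat{\mathcal{U}}_0|\lesssim te^{-\bar\nu|\xi|^2 t}|\widehat{\mathcal{U}}_0|$ to $\widehat{\tilde n^\pm}$, giving the $te^{-\bar\nu|\xi|^2 t}$ summand in \eqref{2.35}. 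The same $\beta_1\beta_4=\beta_2^2$ cancellation kills the would-be $\mathcal{O}(|\xi|)$ part of $P_4$'s contribution to $\beta_1\widehat{\tilde n^+}+\beta_2\widehat{\tilde n^-}$, preserving \eqref{2.36}. I expect the principal difficulty to be the bookkeeping in step (c): one must verify across both cases and for every column that each $|\xi|^{-1}$ (respectively $|\xi|^{-1}\cdot|\xi|^2\cdot t$) contribution either never enters the row of interest or vanishes by the identity $\beta_1\beta_4=\beta_2^2$. This structural identity, hard-wired into the linearization constants, is precisely what renders $\widehat{\varphi^\pm}$ and the weighted combination honest dissipative variables.
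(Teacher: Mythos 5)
Your proposal is correct and follows exactly the route the paper intends: the paper states Lemma \ref{Lemma2.1} with no written proof, instead asserting that it follows ``by virtue of \eqref{2.24}--\eqref{2.28} and \eqref{2.30}--\eqref{2.34},'' and your argument is precisely the entrywise reading-off of those projector formulas, combined with the eigenvalue expansions \eqref{2.18} and the identity $\beta_1\beta_4=\beta_2^2$. You correctly isolate the $|\xi|^{-1}$ singularities in rows $1,3$ of $P_3,P_4$ in the simple-eigenvalue case, the $t\mathrm{e}^{\lambda_3 t}|\xi|$ contribution from $P_4$ in the degenerate case, and the cancellation that protects $\widehat{\varphi^\pm}$ and $\beta_1\widehat{\tilde n^+}+\beta_2\widehat{\tilde n^-}$; the only implicit point (shared with the paper's own statement) is that absorbing factors like $t|\xi|^2$ into $\mathrm{e}^{-\bar\nu|\xi|^2 t}$ tacitly uses that $\mathrm{Re}\,\lambda_i\le -\bar\nu'|\xi|^2$ with $\bar\nu'$ slightly exceeding $\bar\nu$ after the $\mathcal{O}(|\xi|^4)$ remainders are absorbed by choosing $\eta$ small.
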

The key estimates in \eqref{2.35} and \eqref{2.36} enable us to
establish the optimal $L^2$-convergence rate on the low-frequency
part of the solution, which is stated in the following proposition.
\begin{Proposition}[$L^2$--theory]\label{Proposition2.2} For any $k>-\frac{3}{2}$, there exists a positive
constant $C$ independent of $t$, such that
\begin{equation}\left\|\nabla ^k \left(\tilde{n}^{+,l},\tilde{n}^{-,l}\right)(t)\right\|_{L^2}\leq
C(1+t)^{-\frac{1}{4}-\frac{k}{2}}\left\|\widehat {\mathcal
U}^l(0)\right\|_{L^\infty},\label{2.37}\end{equation}\vspace{-0.2cm}
\begin{equation}\left\|\nabla^k \left(\varphi^{+,l},\varphi^{-,l},\beta_1\tilde{n}^{+,l}+\beta_2\tilde{n}^{-,l}\right)(t)\right\|_{L^2}\leq
C(1+t)^{-\frac{3}{4}-\frac{k}{2}}\left\|\widehat {\mathcal
U}^l(0)\right\|_{L^\infty},\label{2.38}\end{equation}
 for any $t\geq 0$.
\end{Proposition}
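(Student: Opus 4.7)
The plan is a direct Fourier--side computation combining Plancherel's identity with the pointwise frequency bounds of Lemma \ref{Lemma2.1}. Since the low--frequency cutoff confines the integral to $\{|\xi|\le\eta\}$ and each factor of initial datum is controlled pointwise there by $\|\widehat{\mathcal U}^{l}(0)\|_{L^\infty}$, Plancherel reduces both bounds to weighted Gaussian--type integrals, after which the time decay falls out of the rescaling $\xi=\zeta/\sqrt{1+t}$.

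For \eqref{2.38}, substituting the bound from \eqref{2.36} reduces matters to
\begin{equation*}
\|\widehat{\mathcal U}^{l}(0)\|_{L^\infty}^{2}\int_{|\xi|\le\eta}|\xi|^{2k}e^{-2\bar\nu|\xi|^{2}t}\,d\xi.
\end{equation*}
Passing to spherical coordinates and rescaling converts the integral into $(1+t)^{-(2k+3)/2}$ times a factor which is uniformly bounded in $t$, the assumption $k>-3/2$ being exactly what guarantees $L^{1}$--integrability of $|\zeta|^{2k}$ at the origin in $\mathbb{R}^{3}$. Taking the square root yields the rate $(1+t)^{-3/4-k/2}$.

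For \eqref{2.37}, the leading piece of \eqref{2.35} is $|\xi|^{-1}e^{-\bar\nu|\xi|^{2}t}$. If $k>-1/2$ the same rescaling (now with the extra $|\xi|^{-2}$ factor) delivers the rate $(1+t)^{-1/4-k/2}$ directly; for the remaining range $-3/2<k\le -1/2$, I split the ball at $|\xi|=(1+t)^{-1/2}$, using the trivial bound $|\widehat{\tilde{n}^{\pm}}|\lesssim\|\widehat{\mathcal U}^{l}(0)\|_{L^\infty}$ on the inner part (contribution of order $(1+t)^{-(2k+3)/2}$, finite when $k>-3/2$) and the rescaling on the outer part (contribution of order $(1+t)^{-(2k+1)/2}$); the worse of the two is exactly the desired rate. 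The secondary term $te^{-\bar\nu|\xi|^{2}t}$ appears only in the multiple--eigenvalue regime through $t\,P_{4}(\xi)$; an inspection of \eqref{2.33} shows that the entries of $P_{4}(\xi)$ feeding $\widehat{\tilde{n}^{\pm}}$ are at worst of order $|\xi|$ (the nominally $|\xi|^{-1}$ entries being killed by the overall $|\xi|^{2}$ prefactor pulled out of $P_{4}$), so the true contribution is controlled by $t|\xi|e^{-\bar\nu|\xi|^{2}t}$. Using the elementary bound $\sqrt{1+t}\,|\xi|\,e^{-(\bar\nu/2)|\xi|^{2}t}\lesssim 1$ one gets $(t|\xi|e^{-\bar\nu|\xi|^{2}t})^{2}\lesssim (1+t)\,e^{-\bar\nu|\xi|^{2}t}$, after which the rescaling argument again yields the rate $(1+t)^{-1/4-k/2}$.

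The main technical point, and the only genuinely model--specific one, is the handling of the multiple--eigenvalue contribution $t\,P_{4}(\xi)$: it forces us to exploit the explicit algebraic structure of the projectors \eqref{2.31}--\eqref{2.34}, and in particular the extra $|\xi|$ factor carried by the density entries of $P_{4}$, in order to prevent the factor $t$ from degrading the decay rate. Everything else is a routine scaling analysis of weighted Gaussian integrals.
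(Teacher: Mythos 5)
Your strategy---Plancherel, the pointwise frequency bounds of Lemma \ref{Lemma2.1}, and the Gaussian rescaling $\xi\mapsto\zeta/\sqrt{1+t}$---is exactly the approach the paper intends (the paper leaves the proof implicit), and your treatment of \eqref{2.38}, of \eqref{2.37} for $k>-1/2$, and of the $t$--prefactor coming from $tP_4(\xi)$ in the repeated--eigenvalue case is correct and well aimed.

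The gap is the ``trivial bound'' $|\widehat{\tilde n^\pm}|\lesssim\|\widehat{\mathcal U}^l(0)\|_{L^\infty}$ that you invoke on the inner ball $|\xi|\le(1+t)^{-1/2}$ to cover $-3/2<k\le-1/2$: it is false. The projector entries that land on $\widehat{\tilde n^\pm}$ are genuinely of order $|\xi|^{-1}$---for instance $[P_3(\xi)]_{12}=-\beta_4/(\mathcal R|\xi|)+O(1)$ and $[P_4(\xi)]_{12}=+\beta_4/(\mathcal R|\xi|)+O(1)$---and the surviving combination $\tfrac{1}{|\xi|}\bigl(e^{\lambda_3 t}-e^{\lambda_4 t}\bigr)$ is controlled only by $\min\{|\xi|^{-1},\,|\xi|t\}\,e^{-c|\xi|^2 t}$, which at $|\xi|\sim(1+t)^{-1/2}$ is of size $(1+t)^{1/2}$, not $O(1)$. (Consistently, the linearized energy $\mathcal E_0$ in \eqref{1.32} controls $\beta^+n^++\beta^-n^-$, $\nabla n^\pm$ and $u^\pm$, but not $n^\pm$ individually, so no uniform pointwise bound on the semigroup entries mapping onto $\widehat{\tilde n^\pm}$ can be expected.) With the correct bound the inner contribution to $\|\nabla^k\tilde n^{\pm,l}\|_{L^2}^2$ is $(1+t)\cdot(1+t)^{-(2k+3)/2}=(1+t)^{-(2k+1)/2}$, not the $(1+t)^{-(2k+3)/2}$ you state; it still matches your outer contribution, so the asserted rate survives, but the intermediate step as written is wrong. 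The cleanest repair is to apply to the $|\xi|^{-1}$ piece exactly the cancellation you already use for $tP_4$: since $|e^{\lambda_3 t}-e^{\lambda_4 t}|\lesssim|\lambda_3-\lambda_4|\,t\,e^{-c|\xi|^2 t}\lesssim|\xi|^2 t\,e^{-c|\xi|^2 t}$, the singular term is itself $\lesssim t|\xi|e^{-c|\xi|^2 t}$, and your own estimate $(t|\xi|e^{-c|\xi|^2 t})^2\lesssim(1+t)e^{-c|\xi|^2 t}$ then yields \eqref{2.37} for every $k>-3/2$ in one stroke, with no ball splitting needed.
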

It should be mentioned that the above $L^2$-convergence rates are
optimal. As a matter of fact, we also have the lower bounds on the
convergence rates which are stated in the following proposition.
\begin{Proposition}\label{Proposition2.3} Assume that $\left(n^{+,l}_0,\varphi^{+,l}_0,n^{-,l}_0,\varphi^{-,l}_0\right)\in L^1$
satisfies
\begin{equation}\widehat{n^{+,l}_0}(\xi)=\widehat{\varphi^{+,l}_0}(\xi)=\widehat{n^{-,l}_0}(\xi)=0,
\quad \text{and} \quad \widehat{\varphi^{-,l}_0}(\xi)-c_0\sim
|\xi|^s\label{2.39}\end{equation} for any $|\xi|\le\eta$, where
$c_0$ and $s$ are given two positive constants. Then, there exists a
positive constant $C_1$ independent of $t$, such that it holds that
\begin{equation}\label{2.40}\min\left\{\| \tilde{n}^{+,l}(t)\|_{L^2},\|\tilde{n}^{-,l}(t)\|_{L^2}\right\}\ge
C_1c_0(1+t)^{-\frac{1}{4}},\end{equation}
\begin{equation}\label{2.41}\min\left\{\|\varphi^{+,l}(t)\|_{L^2},\|\varphi^{-,l}(t)\|_{L^2},\|(\beta_1\tilde{n}^{+,l}+\beta_2\tilde{n}^{-,l}
)(t)\|_{L^2}\right\} \ge C_1c_0(1+t)^{-\frac{3}{4}},\end{equation}
for sufficiently large $t$.
\end{Proposition}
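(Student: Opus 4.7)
The strategy is to combine Plancherel's theorem with the explicit semigroup formulas \eqref{2.24} and \eqref{2.30}, and then perform a careful lower-bound analysis of the resulting low-frequency Fourier integrals, in analogy with the upper bound in Proposition \ref{Proposition2.2}. Under the hypothesis \eqref{2.39} the Fourier transform of the initial data collapses to $(0, 0, 0, \widehat{\varphi_0^{-,l}})^{t}$ with $\widehat{\varphi_0^{-,l}}(\xi) = c_0 + O(|\xi|^s)$ on $|\xi|\le \eta$, so each quantity of interest reduces to the fourth column of $\sum_{i=1}^{4} e^{\lambda_i t} P_i(\xi)$ multiplied by the scalar $c_0 + O(|\xi|^s)$. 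Reading off \eqref{2.25}--\eqref{2.28} in Case 1 (respectively \eqref{2.31}--\eqref{2.34} in Case 2), the $(1,4)$ and $(3,4)$ entries of $P_3, P_4$ carry a $|\xi|^{-1}$ singularity, whereas the $(2,4)$ and $(4,4)$ entries of every $P_i$ are bounded at the origin; the combination entering $\beta_1 \tilde n^{+,l}+\beta_2 \tilde n^{-,l}$ is also bounded at $\xi=0$ after algebraic simplification.

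Next I would split the Fourier-side expression as $c_0\, G_{\mathrm{main}}(\xi,t) + G_{\mathrm{err}}(\xi,t)$, where $G_{\mathrm{main}}$ keeps the principal symbols of $P_i$ and $\lambda_i$ listed above, while $G_{\mathrm{err}}$ absorbs the $O(|\xi|^s)$ correction from the initial data together with the $O(|\xi|)$ and $O(|\xi|^3)$ corrections to $P_i$ and $\lambda_i$. The reverse triangle inequality then gives
\begin{equation}\notag
\|u^{l}(t)\|_{L^2} \;\geq\; c_0\,\|G_{\mathrm{main}}(\cdot,t)\|_{L^2(|\xi|\le\eta)} - \|G_{\mathrm{err}}(\cdot,t)\|_{L^2(|\xi|\le\eta)}.
\end{equation}
Switching to polar coordinates on $\mathbb{R}^3$, the $|\xi|^{-1}$ singularity for $\tilde n^{\pm,l}$ cancels the Jacobian weight and yields $\int_0^\eta |H(r,t)|^2\, dr \gtrsim (1+t)^{-1/2}$ for a profile $H$ of the shape $e^{-\bar\nu r^2 t}$ times a nontrivial bounded factor, whence the claimed $(1+t)^{-1/4}$ bound \eqref{2.40}. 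For $\varphi^{\pm,l}$ and for the linear combination, the absence of the $|\xi|^{-1}$ factor replaces the previous integral by $\int_0^\eta r^2 |H(r,t)|^2\, dr \gtrsim (1+t)^{-3/2}$, producing the faster rate \eqref{2.41}. The error norm picks up an additional factor $(1+t)^{-\min(s,1)/2}$ and is therefore negligible against the main term once $t$ is large enough.

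The principal obstacle is ruling out destructive interference among the four exponential factors. Since the $(1,4)$ and $(3,4)$ entries of $P_1$ and $P_2$ carry conjugate imaginary units, one must bound from below combinations such as $(e^{\lambda_1 t}-e^{\lambda_2 t})/|\xi|$ by using the identity
\begin{equation}\notag
e^{\lambda_1 t}-e^{\lambda_2 t} = 2\mathrm{i}\sin\!\bigl(\sqrt{\beta_1+\beta_4}\,|\xi|\,t\bigr)\,e^{-\frac{\beta_1\nu^++\beta_4\nu^-}{2(\beta_1+\beta_4)}|\xi|^2 t} + O(|\xi|^{3} t),
\end{equation}
and verifying that after integration on the annulus $t^{-1/2}\lesssim |\xi|\lesssim \eta$ the sine contribution does not vanish in the $L^2$ sense. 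An analogous check for $(e^{\lambda_3 t}-e^{\lambda_4 t})/|\xi|$ is available in Case 1 as long as $\tilde\lambda_3\neq \tilde\lambda_4$, i.e.\ $\mathcal R\neq 0$. In the degenerate Case 2 the extra generalized-eigenvector contribution $t\,e^{\lambda_3 t} P_4(\xi)$ from \eqref{2.30} appears dangerous, but inspection of \eqref{2.33} shows $P_4(\xi) = O(|\xi|^2)$, so $t\,|\xi|^{2} e^{-\bar\nu |\xi|^2 t}$ in fact decays strictly faster than the main terms and is absorbed into $G_{\mathrm{err}}$. Once these interference checks are carried out, \eqref{2.40} and \eqref{2.41} follow directly from the elementary low-frequency computations outlined above.
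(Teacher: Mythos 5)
Your proposal follows the paper's proof strategy essentially verbatim: Plancherel plus the explicit semigroup decomposition \eqref{2.24}/\eqref{2.30}, observing that under \eqref{2.39} only the fourth column of each $P_i(\xi)$ contributes, identifying the $|\xi|^{-1}$ singularity in the $(1,4)$ and $(3,4)$ entries of $P_3,P_4$ as the source of the slower $(1+t)^{-1/4}$ rate for $\tilde n^{\pm,l}$, noting its cancellation in $\beta_1\tilde n^{+,l}+\beta_2\tilde n^{-,l}$, and then estimating the resulting Gaussian-type Fourier integrals in polar coordinates, with the double-angle/annulus averaging idea used to control oscillatory interference (the paper does this explicitly via $\cos^2\theta=\tfrac12(1+\cos 2\theta)$ in \eqref{2.42}). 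One small misattribution: the combination $(e^{\lambda_1 t}-e^{\lambda_2 t})/|\xi|$ never actually occurs, since the $(1,4)$ and $(3,4)$ entries of $P_1,P_2$ in \eqref{2.25}--\eqref{2.26} are bounded at $\xi=0$ — the $|\xi|^{-1}$ factor appears only in $P_3,P_4$ — so the oscillatory $P_1,P_2$ contribution to $\widehat{\tilde n^{\pm,l}}$ is a subdominant $O\bigl((1+t)^{-3/4}\bigr)$ term that, as both you and the paper recognize, is simply absorbed via the reverse triangle inequality once $t$ is large. With this correction your Case 2 remark about $P_4(\xi)=O(|\xi|^2)$ absorbing the $t\,e^{\lambda_3 t}$ generalized-eigenvector term is also the right observation, and the argument is sound.
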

\begin{proof} We only deal with { \color{blue} Case 1}, since  the proof of { \color{blue} Case 2} is similar.
Because the proofs are similar, we will  focus on the proofs
concerning the terms $\varphi^{+,l}$ and $\tilde{n}^{+,l}$ for
simplicity. To begin with, by virtue of \eqref{2.24} and
\eqref{2.30}, we have
\begin{align*}
\widehat{\varphi^{+,l}}=&~\left(\frac{\beta_2}{2(\beta_1+\beta_4)}
+\mathcal O(|\xi|)\right)\text{e}^{\lambda_1(|\xi|)t}\widehat{\varphi_0^{-,l}}
+\left(\frac{\beta_2}{2(\beta_1+\beta_4)}+\mathcal O(|\xi|)\right)\text{e}^{\lambda_2(|\xi|)t}\widehat{\varphi_0^{-,l}}\\
&+\left(\frac{\beta_2(\beta_4\nu^++\beta_1\nu^-)}{(\beta_1+\beta_4)\mathcal
R}+\frac{\beta_2\tilde r_4}{\mathcal R}+\mathcal
O(|\xi|)\right)\text{e}^{\lambda_3(|\xi|)t}\widehat{\varphi_0^{-,l}}
\\&+\left(\frac{\beta_2(\beta_4\nu^++\beta_1\nu^-)}{(\beta_1+\beta_4)\mathcal
R}+\frac{\beta_2\tilde \lambda_3}{\mathcal R}+\mathcal
O(|\xi|)\right)\text{e}^{\lambda_4(|\xi|)t}\widehat{\varphi_0^{-,l}}
\\ \sim &~\frac{\beta_2}{\beta_1+\beta_4}\text{e}^{-\frac{\beta_1\nu^++\beta_4\nu^-}{2(\beta_1+\beta_4)}|\xi|^2t}
\cos\left[(\sqrt{\beta_1+\beta_4}|\xi|+O(|\xi|^3))t\right]\widehat{\varphi_0^{-,l}}\\
&+\left(\frac{\beta_2(\beta_4\nu^++\beta_1\nu^-)}{(\beta_1+\beta_4)\mathcal
R}+\frac{\beta_2\tilde \lambda_4}{\mathcal R}\right)\text{e}^{\tilde
\lambda_3|\xi|^2t}\widehat{\varphi_0^{-,l}}+\left(\frac{\beta_2(\beta_4\nu^++\beta_1\nu^-)}{(\beta_1+\beta_4)\mathcal
R}+\frac{\beta_2\tilde \lambda_3}{\mathcal R}\right)\text{e}^{\tilde
\lambda_4|\xi|^2t}\widehat{\varphi_0^{-,l}},
\end{align*}
which together with Plancherel theorem and the double angle formula
implies
\begin{align}\label{2.42}\|\varphi^{+,l}\|_{L^2}^2=&~\|\widehat{\varphi^{+,l}}\|_{L^2}^2\nonumber\\
\ge
&~\frac{c_0^2\beta_2^2}{2(\beta_1+\beta_4)^2}\int_{|\xi|\le\frac{\eta}{2}}\text{e}^{-\frac{\beta_1\nu^+
+\beta_4\nu^-}{\beta_1+\beta_4}|\xi|^2t}\cos^2\left[(\sqrt{\beta_1+\beta_4}|\xi|
+O(|\xi|^3))t\right]\mathrm{d}\xi \nonumber\\
&+\frac{2\beta_2c_0^2}{\beta_1+\beta_4}\int_{|\xi|\le
\eta}\text{e}^{-\frac{\beta_1\nu^++\beta_4\nu^-}{2(\beta_1+\beta_4)}
|\xi|^2t}\cos\left[(\sqrt{\beta_1+\beta_4}|\xi|
+O(|\xi|^3))t\right]\nonumber\\&\times\left(\frac{\beta_2(\beta_4\nu^++\beta_1\nu^-)}{(\beta_1+\beta_4)\mathcal
R}+\frac{\beta_2\tilde \lambda_4}{\mathcal R}\right)\text{e}^{\tilde
\lambda_3|\xi|^2t}\widehat{\varphi_0^{-,l}}+\left(\frac{\beta_2(\beta_4\nu^++\beta_1\nu^-)}{(\beta_1+\beta_4)\mathcal
R}+\frac{\beta_2\tilde \lambda_3}{\mathcal R}\right)\text{e}^{\tilde
\lambda_4|\xi|^2t}\widehat{\varphi_0^{-,l}}\mathrm{d}\xi\nonumber\\&-C_1\int_{|\xi|\le
\eta}\text{e}^{-\frac{\beta_1\nu^++\beta_4\nu^-}{2(\beta_1+\beta_4)}|\xi|^2t}|\xi|^{2s}\mathrm{d}\xi
\nonumber\\
\ge &~\frac{c_0^2\beta_2^2}{4(\beta_1+\beta_4)^2}\int_
{|\xi|\le\frac{\eta}{2}}\text{e}^{-\frac{\beta_1\nu^++\beta_4\nu^-}{\beta_1+\beta_4}|\xi|^2t}\mathrm{d}\xi
\\&+\frac{c_0^2\beta_2^2}{4(\beta_1+\beta_4)^2}
\int_{|\xi|\le\frac{\eta}{2}}\text{e}^{-\frac{\beta_1\nu^++\beta_4\nu^-}{\beta_1+\beta_4}|\xi|^2t}\cos\left[2(\sqrt{\beta_1+\beta_4}|\xi|
+O(|\xi|^3))t\right]\mathrm{d}\xi \nonumber\\
&+\frac{2\beta_2c_0^2}{\beta_1+\beta_4}\int_ {|\xi|\le
\eta}\text{e}^{-\frac{\beta_1\nu^++\beta_4\nu^-}{2(\beta_1+\beta_4)}|\xi|^2t}\cos\left[(\sqrt{\beta_1+\beta_4}|\xi|
+O(|\xi|^3))t\right]\nonumber\\&\times\left(\frac{\beta_2(\beta_4\nu^++\beta_1\nu^-)}{(\beta_1+\beta_4)\mathcal
R}+\frac{\beta_2\tilde \lambda_4}{\mathcal R}\right)\text{e}^{\tilde
\lambda_3|\xi|^2t}\widehat{\varphi_0^{-,l}}+\left(\frac{\beta_2(\beta_4\nu^++\beta_1\nu^-)}{(\beta_1+\beta_4)\mathcal
R}+\frac{\beta_2\tilde \lambda_3}{\mathcal R}\right)\text{e}^{\tilde
\lambda_4|\xi|^2t}\widehat{\varphi_0^{-,l}}\nonumber\\&-C_1\int_{|\xi|\le
\eta}\text{e}^{-\frac{\beta_1\nu^++\beta_4\nu^-}{2(\beta_1+\beta_4)}|\xi|^2t}|\xi|^{2s}\mathrm{d}\xi
\nonumber\\
\ge &~ C_2c_0(1+t)^{-\frac{3}{2}}-C_3\left((1+t)^{-\frac{3}{2}-\frac{1}{2}}+(1+t)^{-\frac{3}{2}-2s}\right)\nonumber\\
\ge &~ C_4c_0(1+t)^{-\frac{3}{2}}\nonumber\end{align}
 if $t$ large enough. Similarly, for the term ${\tilde{n}^{+,l}}$, we have
\begin{align*}\widehat{\tilde{n}^{+,l}}=&~\left(\frac{\beta_2}{2(\beta_1+\beta_4)^\frac{3}{2}}
\mathrm{i}+\mathcal
O(|\xi|)\right)\left(\text{e}^{\lambda_1(|\xi|)t}-\text{e}^{\lambda_2(|\xi|)t}\right)
\widehat{\varphi_0^{-,l}}\\&+\left(\frac{\beta_2}{\mathcal R|\xi|}
+\mathcal O(|\xi|)\right)
\left(\text{e}^{\lambda_3(|\xi|)t}-\text{e}^{\lambda_4(|\xi|)t}\right)\widehat{\varphi_0^{-,l}}\\
\sim&\left(\frac{\beta_2}{2(\beta_1+\beta_4)^\frac{3}{2}}\mathrm{i}+\mathcal
O(|\xi|)\right)\left(\text{e}^{\lambda_1(|\xi|)t}-\text{e}^{\lambda_2(|\xi|)t}\right)\widehat{\varphi_0^{-,l}}
\\&+\left(\frac{\beta_2}{\mathcal
R|\xi|} +\mathcal O(|\xi|)\right){\mathcal R|\xi|}\frac{\mathcal
R|\xi|^2t}{\beta_1+\beta_4}
\text{e}^{\lambda_3(|\xi|)t}\widehat{\varphi_0^{-,l}},
\end{align*}
which together with Plancherel theorem leads to
\begin{equation}\label{2.43}\begin{split}\left\|\tilde{n}^{+,l}\right\|_{L^2}^2
=&~\left\|\widehat{\tilde{n}^{+,l}}\right\|_{L^2}^2\ge
C_5c_0(1+t)^{-\frac{1}{2}}.\end{split}\end{equation}\par Therefore,
we have completed the proof of Proposition \ref{Proposition2.3}.
\end{proof}
\smallskip
In terms of the classic theory of the heat equation, the solutions
$\phi^\pm$ to the IVP \eqref{2.14} possess the following decay
estimates.
\begin{Proposition}[$L^2$--theory]\label{Proposition2.4} For any
$k>-\frac{3}{2}$, there exists a positive constant $C$ independent
of $t$ such that
\begin{equation}\left\|\nabla ^k\phi^{\pm,l}(t)\right\|_{L^2}\lesssim(1+t)^{-\frac{3}{4}-\frac{k}{2}}
\left\| {\widehat{\phi^{\pm,
l}}}(0)\right\|_{L^{\infty}},\label{2.44}
\end{equation}
 for any $t\geq 0$.
\end{Proposition}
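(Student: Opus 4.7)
The plan is to reduce Proposition \ref{Proposition2.4} to the standard decay estimate for the heat kernel via Plancherel's identity. Since the two equations in \eqref{2.14} are decoupled linear heat equations (noting that $\Lambda^{2}=-\Delta$), applying the Fourier transform yields the explicit representation $\widehat{\phi^{\pm}}(\xi,t)=e^{-\nu_{1}^{\pm}|\xi|^{2}t}\widehat{\phi^{\pm}_{0}}(\xi)$. Because the low-frequency cutoff and the heat semigroup are both Fourier multipliers, they commute, and hence
\begin{equation*}
\widehat{\phi^{\pm,l}}(\xi,t)=e^{-\nu_{1}^{\pm}|\xi|^{2}t}\,\widehat{\phi^{\pm,l}}(0)(\xi),
\end{equation*}
which is supported in the ball $|\xi|\le\eta$.

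Next I would apply Plancherel's theorem together with the pointwise bound $|\widehat{\phi^{\pm,l}}(\xi,t)|\le e^{-\nu_{1}^{\pm}|\xi|^{2}t}\|\widehat{\phi^{\pm,l}}(0)\|_{L^{\infty}}$ to write
\begin{equation*}
\|\nabla^{k}\phi^{\pm,l}(t)\|_{L^{2}}^{2}=\int_{\mathbb{R}^{3}}|\xi|^{2k}\,|\widehat{\phi^{\pm,l}}(\xi,t)|^{2}\,d\xi\le\|\widehat{\phi^{\pm,l}}(0)\|_{L^{\infty}}^{2}\int_{|\xi|\le\eta}|\xi|^{2k}e^{-2\nu_{1}^{\pm}|\xi|^{2}t}d\xi.
\end{equation*}
Passing to spherical coordinates and substituting $y=\sqrt{t}\,|\xi|$ for $t\ge 1$, one obtains
\begin{equation*}
\int_{|\xi|\le\eta}|\xi|^{2k}e^{-2\nu_{1}^{\pm}|\xi|^{2}t}d\xi\lesssim t^{-\frac{3}{2}-k}\int_{0}^{\eta\sqrt{t}}y^{2k+2}e^{-2\nu_{1}^{\pm}y^{2}}dy\lesssim(1+t)^{-\frac{3}{2}-k},
\end{equation*}
while for $0\le t\le 1$ the same integral is uniformly bounded thanks to the restriction $|\xi|\le\eta$. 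The hypothesis $k>-\frac{3}{2}$ enters exactly here, in ensuring that $y^{2k+2}$ is integrable at the origin.

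Taking square roots and absorbing constants yields \eqref{2.44}. No substantive obstacle arises: the argument is essentially the low-frequency portion of Proposition \ref{Proposition2.2}, but considerably simpler, since the semigroup acting on $\phi^{\pm}$ is the scalar heat semigroup rather than the matrix-valued object $e^{t\mathcal{A}_{1}(\xi)}$ analyzed there, so one bypasses the spectral decomposition and projector computations entirely.
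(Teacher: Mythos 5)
Your proof is correct and fleshes out what the paper merely asserts by reference to ``the classic theory of the heat equation.'' The Fourier representation $\widehat{\phi^{\pm}}(\xi,t)=e^{-\nu_1^{\pm}|\xi|^2 t}\widehat{\phi^{\pm}_0}(\xi)$, the commutation of the low-frequency cutoff with the heat semigroup, the Plancherel reduction to $\int_{|\xi|\le\eta}|\xi|^{2k}e^{-2\nu_1^{\pm}|\xi|^2t}\,d\xi$, the rescaling $y=\sqrt{t}|\xi|$ for $t\ge 1$, and the observation that $k>-\tfrac{3}{2}$ guarantees integrability of $y^{2k+2}$ near the origin are all exactly the steps the paper implicitly has in mind, so this is the same route, only written out.
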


\smallskip

Finally, noticing  the definition of $\varphi^{\pm}$ and
$\phi^{\pm}$, and the fact that the relations
$$\tilde{u}^{\pm}=-\wedge^{-1}\nabla\varphi^{\pm}-\wedge^{-1}\text{div}\phi^{\pm}$$
involves pseudo-differential operators of degree zero, the estimates
in space $H^k(\mathbb R^3)$ for the original function
$\tilde{u}^{\pm}$ will be the same as for $(\varphi^{\pm},
\phi^{\pm})$. Combining Proposition \ref{Proposition2.2},
\ref{Proposition2.3} and  \ref{Proposition2.4}, we finally deduce
that the solution $\tilde{U}$ to the IVP \eqref{2.11} has the
following decay rates in time.
\begin{Proposition}\label{Proposition2.5} Let $k>-\frac{3}{2}$ and assume
that the initial data $U_0\in L^1(\mathbb R^3)$, then for any $t\ge
0$, the global solution
$\tilde{{U}}=(\tilde{n}^+,\tilde{u}^+,\tilde{n}^-,\tilde{u}^-)^t$ of
the IVP \eqref{2.11} satisfies
\begin{equation}\label{2.45}\left\|\nabla^k\left(\tilde{n}^+, \tilde{n}^-\right)(t)\right\|_{L^2}\leq
C(1+t)^{-\frac{1}{4}-\frac{k}{2}}\left\|
{U}(0)\right\|_{L^{1}},\end{equation}
\begin{equation}\label{2.46}\left\|\nabla ^k\left(\tilde{u}^+, \tilde{u}^-, \beta_1\tilde{n}^+
+\beta_2\tilde{n}^-\right)\right\|_{L^2}\leq
C(1+t)^{-\frac{3}{4}-\frac{k}{2}}\| {U}(0)\|_{L^{1}}.\end{equation}
 If in addition, the initial data satisfies \eqref{2.39}, the following lower-bounds on convergence
 rate hold
\begin{equation}\min\left\{\left\|\tilde{n}^{+,l}(t)\right\|_{L^2},\left\|{\tilde{n}}^{-,l}(t)\right\|_{L^2}\right\}
\geq C_1c_0(1+t)^{-\frac{1}{4}},\label{2.47}
\end{equation}
\begin{equation}\min\left\{\left\|\tilde{u}^{+,l}(t)\right\|_{L^2},\left\|{\tilde{u}}^{-,l}(t)\right\|_{L^2},
\left\|\beta_1\tilde{n}^{+,l}
+\beta_2\tilde{n}^{-,l}(t)\right\|_{L^2} \right\}\geq
C_1c_0(1+t)^{-\frac{3}{4}},\label{2.48}
\end{equation}
if $t$ large enough.
\end{Proposition}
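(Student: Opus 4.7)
The plan is to reduce Proposition~\ref{Proposition2.5} to the three already-established Propositions~\ref{Proposition2.2}, \ref{Proposition2.3}, and \ref{Proposition2.4} by exploiting the Hodge decomposition. Since
\[
\tilde{u}^\pm = -\Lambda^{-1}\nabla\varphi^\pm - \Lambda^{-1}\mathrm{div}\,\phi^\pm,
\]
the operators $\Lambda^{-1}\nabla$ and $\Lambda^{-1}\mathrm{div}$ are zero-order Fourier multipliers (Riesz-type), hence bounded on every $\dot{H}^s(\mathbb{R}^3)$. Therefore $\|\nabla^k\tilde{u}^\pm\|_{L^2}$ is comparable to $\|\nabla^k\varphi^\pm\|_{L^2}+\|\nabla^k\phi^\pm\|_{L^2}$; moreover the Helmholtz orthogonality $\|\tilde{u}^\pm\|_{L^2}^2=\|\varphi^\pm\|_{L^2}^2+\|\phi^\pm\|_{L^2}^2$ gives the one-sided bound $\|\tilde{u}^\pm\|_{L^2}\geq\|\varphi^\pm\|_{L^2}$, which I will use for the lower-bound part.

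For the upper bounds (\ref{2.45})--(\ref{2.46}), I would split every quantity into low- and high-frequency parts through the cutoff defined above Lemma~\ref{Lemma2.1}. On the low-frequency piece, Hausdorff--Young gives $\|\widehat{\mathcal{U}}^l(0)\|_{L^\infty}\lesssim\|U_0\|_{L^1}$, so Propositions~\ref{Proposition2.2} and~\ref{Proposition2.4} directly deliver the rates $(1+t)^{-1/4-k/2}$ for $\tilde{n}^{\pm,l}$ and $(1+t)^{-3/4-k/2}$ for $(\varphi^{\pm,l},\phi^{\pm,l})$ and $\beta_1\tilde{n}^{+,l}+\beta_2\tilde{n}^{-,l}$. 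Transferring the velocity estimate through the Riesz-type identity above yields the stated bound for $\tilde{u}^{\pm,l}$. On the high-frequency piece, the four eigenvalues $\lambda_i(\xi)$ computed in (\ref{2.18}) all have uniformly negative real part on $|\xi|\geq\eta/2$ (strict dissipation away from zero frequency), so $\|\nabla^k(\tilde{n}^{\pm,h},\tilde{u}^{\pm,h})\|_{L^2}$ decays exponentially in $t$, which beats any polynomial rate. Adding the two contributions gives (\ref{2.45})--(\ref{2.46}).

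For the lower bounds (\ref{2.47})--(\ref{2.48}), I would invoke Proposition~\ref{Proposition2.3} on the Hodge-decomposed variables $(\tilde{n}^{\pm,l},\varphi^{\pm,l})$ under the hypothesis (\ref{2.39}), which is already phrased in those variables. The bounds (\ref{2.40}) and (\ref{2.41}) then immediately produce (\ref{2.47}) and the $\beta_1\tilde{n}^{+,l}+\beta_2\tilde{n}^{-,l}$ part of (\ref{2.48}). For the velocity lower bound, the one-sided Helmholtz inequality $\|\tilde{u}^{\pm,l}\|_{L^2}\geq\|\varphi^{\pm,l}\|_{L^2}$ transfers the lower bound on $\varphi^{\pm,l}$ to $\tilde{u}^{\pm,l}$, finishing (\ref{2.48}).

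The main technical obstacle is the high-frequency upper bound, because Propositions~\ref{Proposition2.2} and~\ref{Proposition2.4} only describe the low-frequency regime. To handle it rigorously one must show $\mathrm{Re}\,\lambda_i(\xi)\leq -c$ uniformly for $|\xi|\geq\eta/2$ in both the non-degenerate Case~1 (where $\mathcal{A}_1(\xi)$ is diagonalizable via (\ref{2.19})) and the degenerate Case~2 (where the representation (\ref{2.30}) carries the extra factor $t\,\mathrm{e}^{\lambda_3(|\xi|)t}$). In Case~2 the factor $t\,\mathrm{e}^{\lambda_3(|\xi|)t}$ is still dominated by $\mathrm{e}^{-c|\xi|^2 t}$ at high frequencies, so the exponential decay survives; verifying this accommodation uniformly in $\xi$ is the principal point that needs care. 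The alternative route is a direct $L^2$-energy estimate on the linear system (\ref{2.11}), using the dissipation coming from $\nu_1^\pm,\nu_2^\pm,\sigma^\pm$, which avoids spectral bookkeeping but requires reproducing at the linear level the cross-terms handled in (\ref{1.33})--(\ref{1.34}).
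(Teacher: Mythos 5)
Your route is the one the paper itself follows: after noting that $\tilde{u}^{\pm}=-\Lambda^{-1}\nabla\varphi^{\pm}-\Lambda^{-1}\mathrm{div}\,\phi^{\pm}$ involves only zero--order Fourier multipliers, the paper simply writes ``Combining Proposition~\ref{Proposition2.2}, \ref{Proposition2.3} and \ref{Proposition2.4}, we finally deduce\ldots'' and states Proposition~\ref{Proposition2.5} with no further argument. You correctly flag a point the paper leaves silent: Propositions~\ref{Proposition2.2} and~\ref{Proposition2.4} bound only the low--frequency projections $\tilde{n}^{\pm,l},\varphi^{\pm,l},\phi^{\pm,l}$, whereas \eqref{2.45}--\eqref{2.46} are written for the full solution, so a high--frequency contribution must in principle be controlled. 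Your Helmholtz--orthogonality step $\|\tilde{u}^{\pm,l}\|_{L^2}\geq\|\varphi^{\pm,l}\|_{L^2}$, used with Proposition~\ref{Proposition2.3} for the lower bounds \eqref{2.47}--\eqref{2.48}, is correct and matches the paper's intent.

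However, your proposed high--frequency fix is not yet a proof. First, the formulas \eqref{2.18} are low--frequency Taylor expansions (the $\mathcal O(|\xi|^3)$ and $\mathcal O(|\xi|^4)$ remainders are meaningful only as $|\xi|\to 0$); they cannot be cited to conclude $\mathrm{Re}\,\lambda_i(\xi)\leq -c<0$ uniformly on $|\xi|\geq\eta/2$, for which a separate high--frequency spectral analysis of $\mathcal A_1(\xi)$ (or an $L^2$ energy estimate on the linear system, as you suggest as an alternative) would be needed. Second, and more fundamentally, under the stated hypothesis $U_0\in L^1$ alone, $\|\nabla^k U_0^h\|_{L^2}$ need not be finite, so no exponential--decay statement for the high--frequency semigroup can produce a bound $C(1+t)^{-\frac14-\frac{k}{2}}\|U_0\|_{L^1}$ uniform down to $t=0$: with only $\|\widehat{U}_0\|_{L^\infty}\lesssim\|U_0\|_{L^1}$ available, integrating $|\xi|^{2k}\mathrm{e}^{-c|\xi|^2 t}$ over $|\xi|\geq\eta/2$ yields a parabolic--smoothing singularity of order $t^{-\frac{k}{2}-\frac34}$ near $t=0$, not $(1+t)^{-\frac14-\frac{k}{2}}$. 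The paper's own statement has the same mismatch, and in practice Proposition~\ref{Proposition2.5} is applied only through Duhamel's formula to $U^l$ (see \eqref{4.5}--\eqref{4.6}), so only the low--frequency version is ever used; that version your plan does establish directly from Propositions~\ref{Proposition2.2}--\ref{Proposition2.4}.
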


\section{\leftline {\bf{Energy estimates for the nonlinear system.}}}
\setcounter{equation}{0}
In this section, we devote ourselves to deriving the a priori energy
estimates for the nonlinear system \eqref{2.1}. To see this, we
assume a priori that for sufficiently small $\delta>0$,
\begin{equation}\label{3.1}\|\left(n^+, n^-\right)\|_{H^{\ell+1}}+\|\left(u^+, u^-\right)\|_{H^\ell}\le
\delta.
\end{equation} \par
In what follows, a series of lemmas on the energy estimates are
given. First, we deduce energy estimate on
$\left(n^+,u^+,n^-,u^-\right)$, which is stated in the following
lemma.
\smallskip
\begin{Lemma}\label{Lemma3.1}Assume that the notations and hypotheses  of Theorem \ref{1mainth} and \eqref{3.1}  are in force.
Then, for $0\leq k\leq \ell$, it holds that
\begin{equation}\label{L1.1}
\begin{split}
& \frac{1}{2}
 \frac{\rm d}{{\rm d}t}\left\{\|\nabla^k\left(\beta^+ n^++\beta^-n^-\right)\|_{L^2}^2+\frac{\sigma^+}{\beta_2}
 \|\nabla^{k+1}
 n^+\|_{L^2}^2+\frac{\sigma^-}{\beta_3} \|\nabla^{k+1} n^-\|_{L^2}^2
 + \frac{1}{\beta_2} \|\nabla^ku^+\|_{L^2}^2+\frac{1}{\beta_3} \|\nabla^ku^-\|_{L^2}^2
\right\}\\
&\quad+\displaystyle\frac{3}{4\beta_2}\Big(\nu^+_1\|\nabla^{k+1}
u^+\|_{L^2}^2+\nu^+_2\|\nabla^k\text{\rm div}
u^+\|_{L^2}^2\Big)+\frac{3}{4\beta_3}\Big(\nu^-_1\|\nabla^{k+1}
u^-\|_{L^2}^2+\nu^-_2\|\nabla^k\text{\rm div} u^-\|_{L^2}^2\Big)
\\
&\leq C\delta\Big(\|\nabla^{k+1} (n^+, n^-)\|_{H^1}^2 +\|\nabla^{k}
(u^+, u^-)\|_{L^2}^2\Big),
\end{split}
\end{equation}
for some constant $C>0$ independent of $\delta$, where
$\displaystyle\beta^+=\sqrt{\frac{\beta_1}{\beta_2}}$ and
$\displaystyle\beta^-=\sqrt{\frac{\beta_4}{\beta_3}}$.
\end{Lemma}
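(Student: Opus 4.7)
The plan is to apply the operator $\nabla^k$ to each of the four equations in \eqref{2.1} and test the resulting equations against the multipliers $\tfrac{\beta_1}{\beta_2}\nabla^k n^+$, $\tfrac{1}{\beta_2}\nabla^k u^+$, $\tfrac{\beta_4}{\beta_3}\nabla^k n^-$, and $\tfrac{1}{\beta_3}\nabla^k u^-$, respectively. After integrating over $\mathbb{R}^3$ and summing, the time-derivative parts produce $\tfrac{1}{2}\tfrac{d}{dt}$ of $(\beta^+)^2\|\nabla^k n^+\|_{L^2}^2+(\beta^-)^2\|\nabla^k n^-\|_{L^2}^2+\tfrac{1}{\beta_2}\|\nabla^k u^+\|_{L^2}^2+\tfrac{1}{\beta_3}\|\nabla^k u^-\|_{L^2}^2$, and the viscous terms, after integration by parts, yield the full dissipation $\tfrac{\nu_1^{\pm}}{\beta_{2,3}}\|\nabla^{k+1}u^{\pm}\|_{L^2}^2+\tfrac{\nu_2^{\pm}}{\beta_{2,3}}\|\nabla^k\mathrm{div}\,u^{\pm}\|_{L^2}^2$ on the left-hand side.

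The decisive algebraic structure comes from the coupling and capillary terms. The direct couplings $\tfrac{\beta_1}{\beta_2}\langle\nabla^{k+1}n^+,\nabla^k u^+\rangle$ and the analogue for $u^-$ cancel exactly, by integration by parts, against the continuity-equation contributions $\tfrac{\beta_1}{\beta_2}\langle\nabla^k\mathrm{div}\,u^+,\nabla^k n^+\rangle$ (and the $-$ counterpart). The mixed couplings $\langle\nabla^{k+1}n^-,\nabla^k u^+\rangle$ and $\langle\nabla^{k+1}n^+,\nabla^k u^-\rangle$, however, do not cancel directly: instead, after integration by parts and substitution of $\nabla^k\mathrm{div}\,u^{\pm}=-\nabla^k\partial_t n^{\pm}+\nabla^k F_{1,3}$ from \eqref{2.1}$_{1,3}$, they combine into $\tfrac{d}{dt}\langle\nabla^k n^+,\nabla^k n^-\rangle$ plus a nonlinear remainder. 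Invoking the identity $\beta^+\beta^-=\sqrt{\beta_1\beta_4/(\beta_2\beta_3)}=1$ from $\beta_1\beta_4=\beta_2\beta_3$, this cross term fits precisely with the diagonal contributions to produce $\tfrac{1}{2}\tfrac{d}{dt}\|\nabla^k(\beta^+ n^++\beta^-n^-)\|_{L^2}^2$. The capillary terms $-\tfrac{\sigma^{\pm}}{\beta_{2,3}}\langle\nabla^{k+1}\Delta n^{\pm},\nabla^k u^{\pm}\rangle$, after integration by parts and one further use of the continuity equations, yield $\tfrac{\sigma^{\pm}}{2\beta_{2,3}}\tfrac{d}{dt}\|\nabla^{k+1}n^{\pm}\|_{L^2}^2$ together with a nonlinear piece $\tfrac{\sigma^{\pm}}{\beta_{2,3}}\int\nabla^k\Delta n^{\pm}\cdot\nabla^k F_{1,3}\,dx$ that is controlled below.

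The principal obstacle is the nonlinear estimates, because the target right-hand side $C\delta(\|\nabla^{k+1}(n^+,n^-)\|_{H^1}^2+\|\nabla^k(u^+,u^-)\|_{L^2}^2)$ deliberately excludes any $\|\nabla^k(n^+,n^-)\|_{L^2}^2$ contribution, so every nonlinear term must be arranged to produce either a velocity norm or a strictly higher density derivative. For $F_1$ and $F_3$ the divergence structure $F_{1,3}=-\mathrm{div}(n^{\pm}u^{\pm})$ is exploited: $\langle\nabla^k F_{1,3},\nabla^k n^{\pm}\rangle=\langle\nabla^k(n^{\pm}u^{\pm}),\nabla^{k+1}n^{\pm}\rangle$, and Leibniz expansion together with Moser-type commutator estimates and the Gagliardo--Nirenberg / Sobolev embedding $H^1(\mathbb{R}^3)\hookrightarrow L^6(\mathbb{R}^3)$ handles each term; crucially, the borderline factor $u^{\pm}\cdot\nabla\nabla^k n^{\pm}$ is estimated by $\|u^{\pm}\|_{L^3}\|\nabla^k n^{\pm}\|_{L^6}\|\nabla^{k+1}n^{\pm}\|_{L^2}\lesssim\delta\|\nabla^{k+1}n^{\pm}\|_{L^2}^2$, bypassing any use of Poincaré. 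For $F_2$ and $F_4$ each nonlinear piece carries a composition factor among $g_{\pm},\bar g_{\pm},h_{\pm},k_{\pm},l_{\pm}$ that vanishes at $(0,0)$ by construction, hence supplies a small prefactor $\lesssim\delta$ via \eqref{3.1}; coupled with the one-derivative smoothness of the remaining factors, the same Gagliardo--Nirenberg/Sobolev interpolations yield bounds of the desired form, where any top-order piece $\nabla^{k+1}u^{\pm}$ or $\nabla^k\mathrm{div}\,u^{\pm}$ appearing on the right is absorbed by sacrificing one quarter of the corresponding viscous dissipation, leaving the stated coefficient $\tfrac{3}{4}$ on the left-hand side and completing the proof.
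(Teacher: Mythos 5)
Your proposal matches the paper's proof essentially step for step: the same multipliers $\tfrac{\beta_1}{\beta_2}\nabla^k n^+,\tfrac{1}{\beta_2}\nabla^k u^+,\tfrac{\beta_4}{\beta_3}\nabla^k n^-,\tfrac{1}{\beta_3}\nabla^k u^-$; the cancellation of diagonal pressure/continuity couplings; converting the mixed terms $\langle\nabla^{k+1}n^{\mp},\nabla^k u^{\pm}\rangle$ into a time derivative of $\langle\nabla^k n^+,\nabla^k n^-\rangle$ via the continuity equations; and the identity $\beta^+\beta^-=1$ (from $\beta_1\beta_4=\beta_2\beta_3$) to assemble $\tfrac12\tfrac{d}{dt}\|\nabla^k(\beta^+n^++\beta^-n^-)\|_{L^2}^2$. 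The nonlinear strategy (Gagliardo--Nirenberg, the divergence structure of $F_1,F_3$, absorbing the top-order velocity pieces into a fraction of the viscous dissipation) is also exactly the paper's.

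One detail is off: you claim that \emph{every} nonlinear piece of $F_2, F_4$ carries a composition factor among $g_{\pm},\bar g_{\pm},h_{\pm},k_{\pm},l_{\pm}$ that vanishes at $(0,0)$. This is true for $g_{\pm},\bar g_{\pm},l_{\pm}$ (they are explicitly the coefficient minus its background value) but \emph{false} for $h_{\pm}$ and $k_{\pm}$, which are the coefficient functions themselves and are $O(1)$ at $(0,0)$; see \eqref{2.8}--\eqref{2.9}. The argument is not harmed because those pieces, e.g.\ $\mu^{\pm}h_{\pm}\partial_j n^{\pm}\partial_j u_i^{\pm}$, are already products of two derivative factors of small quantities, so the required smallness comes from $\|\nabla(n^{\pm},u^{\pm})\|_{H^1}\lesssim\delta$ rather than from $h_{\pm},k_{\pm}$ being small. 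The paper implicitly exploits exactly this in its equivalent forms \eqref{L1-3} and \eqref{L1-5}. So the claim you state is not the actual mechanism, though the estimate you arrive at is still correct.
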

\begin{remark} Compared to Lemma \ref{2.2} and Lemma \ref{2.3} of
\cite{c1}, where $\|\nabla^{k} (n^+, n^-)\|_{H^2}^2$ are involved in
the right--hand side of the corresponding energy inequality,
\eqref{L1.1} is new and different since its right--hand side only
includes $\|\nabla^{k+1} (n^+, n^-)\|_{H^1}^2$, and particularly
excludes the term $\|\nabla^{k} (n^+, n^-)\|_{L^2}^2$. We remark
that this new type of energy inequality \eqref{L1.1} enables us to
close our energy estimates at each $k$th level and plays an
essential role in our analysis.
\end{remark}
\begin{proof}
For $0\leq k\leq \ell$, multiplying $\nabla^k\eqref{2.1}_1$,
$\nabla^k\eqref{2.1}_2$, $\nabla^k\eqref{2.1}_3$ and
$\nabla^k\eqref{2.1}_4$ by
$\displaystyle\frac{\beta_1}{\beta_2}\nabla^kn^+$,
$\displaystyle\frac{1}{\beta_2}\nabla^ku^+$,
$\displaystyle\frac{\beta_4}{\beta_3}\nabla^kn^-$ and
$\displaystyle\frac{1}{\beta_3}\nabla^ku^-$ respectively, summing up
and then integrating the resultant equation over $\mathbb{R}^3$ by
parts, we have
\begin{equation}\label{L1-1}
\begin{split}
& \frac{1}{2}
 \frac{\rm d}{{\rm d}t}\left\{\|\beta^+ \nabla^kn^+\|_{L^2}^2+\|\beta^-\nabla^kn^-\|_{L^2}^2
 +\frac{\sigma^+}{\beta_2} \|\nabla^{k+1}
 n^+\|_{L^2}^2\right.\\
 &\quad+\left.\frac{\sigma^-}{\beta_3} \|\nabla^{k+1} n^-\|_{L^2}^2
 + \frac{1}{\beta_2} \|\nabla^ku^+\|_{L^2}^2+\frac{1}{\beta_3} \|\nabla^ku^-\|_{L^2}^2
\right\}\\
&\quad+\displaystyle\frac{1}{\beta_2}\Big(\nu^+_1\|\nabla^{k+1}
u^+\|_{L^2}^2+\nu^+_2\|\nabla^k\text{\rm div}
u^+\|_{L^2}^2\Big)+\frac{1}{\beta_3}\Big(\nu^-_1\|\nabla^{k+1}
u^-\|_{L^2}^2+\nu^-_2\|\nabla^k\text{\rm div} u^-\|_{L^2}^2\Big)\\
 &=\displaystyle\left\langle\nabla^k F_1,
\frac{\beta_1}{\beta_2}\nabla^k n^+\right\rangle +
    \left\langle\nabla^k F_2,  \frac{1}{\beta_2}\nabla^k u^+\right\rangle+
      \left\langle\nabla^k F_3, \frac{\beta_4}{\beta_3}\nabla^kn^- \right\rangle +
        \left\langle \nabla^kF_4, \frac{1}{\beta_3}\nabla^k u^-
        \right\rangle\\&\quad-
          \left\langle \Delta\nabla^k n^+, \frac{\beta_1\sigma^+}{\beta_2}\nabla^kF_1 \right\rangle-
           \left\langle \Delta \nabla^kn^-, \frac{\beta_4\sigma^-}{\beta_3}\nabla^kF_3
                \right\rangle-\Big\langle \nabla^{k+1} n^-,  \nabla^ku^+ \Big\rangle -
            \Big\langle \nabla^{k+1} n^+,  \nabla^ku^-
            \Big\rangle\\&:=I_1^k +  I_2^k + I_3^k  +   I_4^k    + I_5^k
            +  I_6^k+I_7^k + I_8^k,
\end{split}
\end{equation}
where $I_j^k$, $j=1,2,\cdots 8$, denote the corresponding terms in
the above equation, which will be estimated as follows. Notice that
the nonlinear source terms $F_k$ $(k=1,2,3,4)$ possess the following
equivalent properties:
\begin{equation}\label{L1-2}
F_1
 \sim
  n^+\partial_k u^+_k
   +
    u^+_k \partial_k n^+,
\end{equation}
\begin{equation}\label{L1-3}
F_2^j
 \sim
  (n^+ +n^-)
   \partial_j n^+
    +
     (u^+\cdot \nabla)u^+_j
      +
       \partial_i u^+_j \partial_i n^+
        +
         \partial_j u^+_i \partial_i n^+
          +
           (n^++n^-)\partial_j n^-,
\end{equation}
\begin{equation}\label{L1-4}
F_3
 \sim
  n^-\partial_k u^-_k
   +
    u^-_k \partial_k n^-,
\end{equation}
\begin{equation}\label{L1-5}
F_4^j
 \sim
  (n^+ +n^-)
   \partial_j n^-
    +
     (u^-\cdot \nabla)u^-_j
      +
       \partial_i u^-_j \partial_i n^-
        +
        \partial_j u^-_i \partial_i n^-
        +
         (n^++n^-)\partial_j n^+.
\end{equation}
\par
Firstly, for $I^k_1$, it follows from integration by parts, Lemma
\ref{1interpolation}, Lemma \ref{es-product}, Young inequality and
 \eqref{3.1} that
\begin{equation}\label{L1-6}
\begin{split}
|I_1^k|
 &\leq C\|\nabla^k(n^+u^+)\|_{L^2}\|\nabla^{k+1} n^+\|_{L^2}\\
& \leq C\Big(\|\nabla^k n^+\|_{L^6}\|u^+\|_{L^3}+\|n^+\|_{L^3}\|\nabla^k u^+\|_{L^6}\Big)\|\nabla^{k+1} n^+\|_{L^2}\\
& \leq C\Big(\|\nabla^{k+1} n^+\|_{L^2}\|u^+\|_{H^1}+\|n^+\|_{H^1}\|\nabla^{k+1} u^+\|_{L^2}\Big)\|\nabla^{k+1} n^+\|_{L^2}\\
& \leq C\delta\Big(\|\nabla^{k+1} n^+\|_{L^2}^2+\|\nabla^{k+1}
u^+\|_{L^2}^2\Big).
\end{split}
\end{equation}
Similarly, for $I^k_3$, we have
\begin{equation}\label{L1-7}
|I_3^k|
 \leq C\delta\Big(\|\nabla^{k+1} n^-\|_{L^2}^2+\|\nabla^{k+1}
u^-\|_{L^2}^2\Big).
\end{equation}
Noting \eqref {L1-3} and employing similar arguments used in
\eqref{L1-6}, one has
\begin{equation}\label{L1-8}
\begin{split}
|I_2^k| \leq
 &
  C\Big|\Big\langle \nabla^k[(n^++n^-)\nabla n^+],\nabla^k u^+\Big\rangle\Big|
   +
    C\Big|\Big\langle \nabla^k[(u^+\cdot \nabla )u^+], \nabla^k u^+\Big\rangle\Big|\\
 &
     +
      C\Big|\Big\langle \nabla^k(\nabla n^+\cdot \nabla u^+),\nabla^k u^+\Big\rangle\Big|
       +
        C\Big|\Big\langle \nabla^k(\nabla n^+ (\nabla u^+)^\tau),\nabla^k u^+\Big\rangle\Big|\\
 &        +
           C\Big|\Big\langle \nabla^k[(n^++n^-)\nabla n^-],\nabla^k u^+\Big\rangle\Big|\\
 \leq
  &
   C\|\nabla^k u^+\|_{L^2}\Big(\|(n^++n^-)\|_{L^\infty}\|\nabla^{k+1} n^+\|_{L^2}+\|\nabla^k(n^++n^-)\|_{L^6}\|\nabla n^+\|_{L^3}\Big)\\
  &
    +
     C\|\nabla^k u^+\|_{L^6}\Big(\|u^+\|_{L^3}\|\nabla^{k+1} u^+\|_{L^2}+\|\nabla^k u^+\|_{L^2}\|\nabla u^+\|_{L^3}\Big)\\
  &
      +
       C\|\nabla^k u^+\|_{L^6}\Big(\|\nabla^{k+1}u^+\|_{L^2}\|\nabla n^+\|_{L^3}+\|\nabla u^+\|_{L^3}\|\nabla^{k+1} n^+\|_{L^2}\Big)\\
  &
        +
         C\|\nabla^k u^+\|_{L^2}\Big(\|(n^++n^-)\|_{L^\infty}\|\nabla^{k+1} n^-\|_{L^2}+\|\nabla^k(n^++n^-)\|_{L^6}\|\nabla n^-\|_{L^3}\Big)\\
\leq
 &
  C\delta \Big(\|\nabla^{k+1}(n^+, n^-)\|_{L^2}^2
    +
     \|\nabla^k u^+\|^2_{H^1}\Big).
\end{split}
\end{equation}
Similarly, for $I_4^k$, it holds that
\begin{equation}\label{L1-9}
 |I_4^k|
\leq
  C\delta \Big(\|\nabla^{k+1}(n^+, n^-)\|_{L^2}^2
    +
     \|\nabla^k u^-\|^2_{H^1}\Big).
\end{equation}
For the term $I_5^k$, it holds that
\begin{equation}\label{L1-10}
\begin{split}
|I_5^k|
 \leq
  &
   C\Big|\Big\langle\nabla^k[n^+\mathrm{div} u^++ u^+\cdot \nabla n^+],\Delta \nabla^k n^+\Big\rangle\Big|\\
 \leq
  &
   C\Big(\|n^+\|_{L^\infty}\|\nabla^{k+1} u^+\|_{L^2}+\|\nabla u^+\|_{L^3}\|\nabla^k n^+\|_{L^6}\Big)\|\nabla^{k+2} n^+\|_{L^2}\\
  &
    +
     C\Big(\|u^+\|_{L^\infty}\|\nabla^{k+1}n^+\|_{L^2}+\|\nabla n^+\|_{L^3}\|\nabla^k u^+\|_{L^6}\Big)\|\nabla^{k+2} n^+\|_{L^2}\\
 \leq
  &
   C\delta\Big(\|\nabla^{k+1} n^+\|_{H^1}^2
     +\|\nabla^{k+1} u^+\|_{L^2}^2\Big).
\end{split}
\end{equation}
Similarly, for $I_6^k$, we have
\begin{equation}\label{L1-11}
|I_6^k|
 \leq C\delta\Big(\|\nabla^{k+1} n^-\|_{H^1}^2
     + \|\nabla^{k+1} u^-\|_{L^2}^2\Big).
\end{equation}
For $I_7^k$ and $I_8^k$, it follows from integration by parts,
$\eqref{2.1}_1$, and $\eqref{2.1}_3$ that
\begin{equation}\begin{split}\label{L1-12}
I_7^k+I_8^k&=-\Big\langle \nabla^{k+1} n^-,  \nabla^ku^+ \Big\rangle
-\Big\langle \nabla^{k+1} n^+,  \nabla^ku^-\Big\rangle\\
 &=\Big\langle \nabla^{k} n^-,  \nabla^k\mathrm{div}u^+ \Big\rangle
+\Big\langle \nabla^{k} n^+,  \nabla^k\mathrm{div}u^-\Big\rangle\\
 &\displaystyle=-2\frac{d}{dt}\Big\langle \nabla^{k} n^-,  \nabla^kn^+
 \Big\rangle+\Big\langle \nabla^{k} n^-,  \nabla^kF^1\Big\rangle+
 \Big\langle \nabla^{k} n^+,  \nabla^kF^3
 \Big\rangle.
\end{split}\end{equation}
Furthermore, similar to the proofs of $I_1^k$ and $I_3^k$, the last
two terms on the right--hand side of \eqref{L1-12} can be estimated
as follows:
\begin{equation}\label{L1-13}
\Big|\Big\langle \nabla^{k} n^-,  \nabla^kF^1\Big\rangle\Big|+
 \Big|\Big\langle \nabla^{k} n^+,  \nabla^kF^3
 \Big\rangle\Big|\leq C\delta\Big( \|\nabla^{k+1} (n^+,n^-)\|_{L^2}^2
     +\|\nabla^{k+1} (u^+,u^-)\|_{L^2}^2\Big).
\end{equation}\par
Finally, substituting \eqref{L1-6}-\eqref{L1-13} into \eqref{L1-1}
and using the smallness of $\delta$, we get \eqref{L1.1} and thus
complete the proof of Lemma \ref{Lemma3.1}.
\end{proof}

Notice that \eqref{L1.1} only involves the dissipative estimates of
$u^\pm$. Next, we deduce the dissipative estimates for $n^\pm$,
which is stated in the following lemma.
\begin{Lemma}\label{Lemma3.3}
Assume that the notations and hypotheses  of Theorem \ref{1mainth}
and \eqref{3.1}  are in force. Then, for $0\leq k\leq \ell$, it holds
that
\begin{equation}\label{L3.1}
\begin{split}
& \frac{\rm d}{{\rm d}t}
 \left\{\left\langle \nabla^k u^+, \frac{1}{\beta_2}\nabla \nabla^k n^+\right\rangle
 +\left\langle\nabla^k u^-,\frac{1}{\beta_3}\nabla\nabla^k n^-\right\rangle
 \right\}\\
 &\quad+\|\nabla^{k+1}\left(\beta^+\nabla n^+ +\beta^-\nabla n^-\right)\|_{L^2}^2
+\frac{3\sigma^+}{4\beta_2}\|\nabla^{k+2} n^+\|_{L^2}^2+\frac{3\sigma^-}{4\beta_3}\|\nabla^{k+2} n^-\|_{L^2}^2\\
&\leq C\Big(\delta \|\nabla^{k+1} (n^+,n^-)\|_{L^2}^2+\|\nabla^{k+1}
(u^+,u^-)\|_{L^2}^2\Big),
\end{split}
\end{equation}
for some constant $C>0$ independent of $\delta$.
\end{Lemma}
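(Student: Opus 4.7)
The argument follows the classical cross-term method for extracting a dissipative estimate on the densities from the momentum equations, adapted to the two-fluid setting where the algebraic relation $\beta_{1}\beta_{4}=\beta_{2}\beta_{3}=\beta_{2}^{2}$ (equivalently $\beta^{+}\beta^{-}=1$) is precisely what lets the cross terms combine into a perfect square. My plan is to apply $\nabla^{k}$ to the two momentum equations $\eqref{2.1}_{2}$ and $\eqref{2.1}_{4}$, take the $L^{2}$ inner product against $\frac{1}{\beta_{2}}\nabla\nabla^{k}n^{+}$ and $\frac{1}{\beta_{3}}\nabla\nabla^{k}n^{-}$ respectively, and then sum.

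For each equation, the time-derivative contribution produces the desired $\frac{d}{dt}\langle\nabla^{k}u^{\pm},\frac{1}{\beta_{2,3}}\nabla\nabla^{k}n^{\pm}\rangle$ up to a transfer of $\partial_{t}$ onto $n^{\pm}$ via the continuity equations $\eqref{2.1}_{1,3}$; after one integration by parts this yields only $\tfrac{1}{\beta_{2,3}}\|\nabla^{k}\mathrm{div}\,u^{\pm}\|_{L^{2}}^{2}$, which is dominated by the $\|\nabla^{k+1}(u^{+},u^{-})\|_{L^{2}}^{2}$ term on the right, together with nonlinear remainders involving $\nabla^{k}F_{1,3}$. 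The pressure pairings give the quadratic form $\frac{\beta_{1}}{\beta_{2}}\|\nabla^{k+1}n^{+}\|^{2}+2\langle\nabla^{k+1}n^{+},\nabla^{k+1}n^{-}\rangle+\frac{\beta_{4}}{\beta_{3}}\|\nabla^{k+1}n^{-}\|^{2}$, which by $\beta^{+}\beta^{-}=1$ collapses into the complete square $\|\beta^{+}\nabla^{k+1}n^{+}+\beta^{-}\nabla^{k+1}n^{-}\|^{2}$ stated on the left-hand side of \eqref{L3.1}. The Korteweg contributions $-\tfrac{\sigma^{\pm}}{\beta_{2,3}}\langle\nabla^{k}\nabla\Delta n^{\pm},\nabla\nabla^{k}n^{\pm}\rangle$ integrate by parts once to produce $\tfrac{\sigma^{\pm}}{\beta_{2,3}}\|\nabla^{k+2}n^{\pm}\|^{2}$, furnishing the capillary dissipation.

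The viscous contributions $-\nu_{i}^{\pm}\langle\nabla^{k}\Delta u^{\pm},\nabla^{k+1}n^{\pm}\rangle$ and $-\nu_{i}^{\pm}\langle\nabla^{k}\nabla\mathrm{div}\,u^{\pm},\nabla^{k+1}n^{\pm}\rangle$ are routine: integration by parts bounds each by $\|\nabla^{k+1}u^{\pm}\|\,\|\nabla^{k+2}n^{\pm}\|$, and Cauchy--Schwarz with sufficiently small constants absorbs them into the Korteweg dissipation, which is precisely why the coefficient in the conclusion drops from $\sigma^{\pm}/\beta_{2,3}$ to $3\sigma^{\pm}/(4\beta_{2,3})$. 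The nonlinear inner products $\langle\nabla^{k}F_{2,4},\nabla^{k+1}n^{\pm}\rangle$ and $\langle\nabla^{k}F_{1,3},\nabla^{k+1}u^{\pm}\rangle$ are controlled by means of the structural decompositions \eqref{L1-2}--\eqref{L1-5}, the product and commutator estimates (Lemma~\ref{es-product}), Gagliardo--Nirenberg interpolation (Lemma~\ref{1interpolation}), and the smallness \eqref{3.1}; each term is placed in the form $\delta(\|\nabla^{k+1}(n^{+},n^{-})\|^{2}+\|\nabla^{k+1}(u^{+},u^{-})\|^{2})$, with any stray $\|\nabla^{k+2}n^{\pm}\|$ factor again absorbed by the capillary dissipation. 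The main obstacle is bookkeeping the worst semilinear pieces of $F_{2,4}$ --- in particular $(n^{+}+n^{-})\nabla n^{\pm}$ and $\nabla n^{\pm}\nabla u^{\pm}$ --- so that the top derivative always lands either on a $u$ factor (already on the right-hand side) or on $n$ at order $k{+}2$ (absorbed by capillarity), and never generates a $\|\nabla^{k+1}n\|^{2}$ term without the prefactor $\delta$; the structural identities \eqref{L1-3}, \eqref{L1-5} make this placement transparent because every summand in $F_{2,4}$ carries at least one undifferentiated $n^{\pm}$ or $u^{\pm}$ that can be placed in $L^{\infty}$ via Sobolev embedding under \eqref{3.1}.
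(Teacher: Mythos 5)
Your proposal is correct and takes essentially the same route as the paper: apply $\nabla^k$ to the two momentum equations, pair with $\frac{1}{\beta_2}\nabla\nabla^k n^+$ and $\frac{1}{\beta_3}\nabla\nabla^k n^-$, sum, use $\beta_1\beta_4=\beta_2\beta_3=\beta_2^2$ to collapse the pressure cross terms into the perfect square, obtain the capillary dissipation $\frac{\sigma^\pm}{\beta_{2,3}}\|\nabla^{k+2}n^\pm\|^2$ from the Korteweg terms, transfer $\partial_t$ to $n^\pm$ via the continuity equations (producing $\|\nabla^k\mathrm{div}\,u^\pm\|^2$ plus $F_{1,3}$-remainders), absorb the viscous cross terms into the capillary dissipation by Cauchy--Schwarz (hence the $3/4$ coefficient), and control the nonlinear pieces by Lemmas~\ref{1interpolation}--\ref{es-product} under \eqref{3.1}. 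This is exactly the structure of the paper's derivation of \eqref{L3-1} and the ensuing estimates for $J_1^k$ through $J_8^k$.
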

\begin{proof}
For $0\leq k\leq\ell$, applying $\nabla^k$ to $\eqref{2.1}_2$,
$\eqref{2.1}_4$ and then multiplying the resultant equations by
$\displaystyle\frac{1}{\beta_2} \nabla^k \nabla n^+$ and
$\displaystyle\frac{1}{\beta_3} \nabla^k \nabla n^-$ respectively,
summing up and then integrating over $\mathbb{R}^3$, we obtain
\begin{equation}\label{L3-1}
\begin{split}
& \frac{\rm d}{{\rm d}t}
 \left\{\left\langle \nabla^ku^+, \frac{1}{\beta_2}\nabla\nabla^k n^+\right\rangle
    + \left\langle \nabla^ku^-, \frac{1}{\beta_3}\nabla\nabla^k n^-\right\rangle
 \right\}\\
&\quad +\|\nabla^{k+1}\left(\beta^+\nabla n^+ +\beta^-\nabla
n^-\right)\|_{L^2}^2 +\frac{\sigma^+}{\beta_2}\|\nabla^{k+2}
n^+\|_{L^2}^2+\frac{\sigma^-}{\beta_3}\|\nabla^{k+2}
n^-\|_{L^2}^2\\
&=\left\langle \nabla^k u^+,
\frac{1}{\beta_2}\partial_t\nabla\nabla^k n^+\right\rangle
   +\left\langle \Delta\nabla^k  u^+, \frac{\nu_1^+}{\beta_2}\nabla\nabla^k
n^+\right\rangle+\left\langle \nabla\nabla^k  {\rm div}u^+, \frac{\nu_2^+}{\beta_2}\nabla\nabla^k  n^+\right\rangle\\
&\quad + \left\langle\nabla^k  F_2, \frac{1}{\beta_2}\nabla \nabla^k
n^+\right\rangle + \left\langle \nabla^k
u^-,\frac{1}{\beta_3}\partial_t\nabla \nabla^k n^-\right\rangle +
            \left\langle \Delta \nabla^k u^-, \frac{\nu_1^-}{\beta_3}\nabla \nabla^k n^-\right\rangle\\
&\quad + \left\langle \nabla \nabla^k {\rm div}u^-,
\frac{\nu_2^-}{\beta_3}\nabla \nabla^k n^-\right\rangle +
\left\langle \nabla^k F_4, \frac{1}{\beta_3}\nabla \nabla^k n^-\right\rangle\\
&:= J_1^k + J_2^k + J_3^k + J_4^k + J_5^k + J_6^k + J_7^k + J_8^k.
\end{split}
\end{equation}
We shall estimate each term in the right hand side of \eqref{L3-1}.
First, it follows from integration by parts, $\eqref{2.1}_1$ and
$\eqref{2.1}_3$ that
\begin{equation}\label{L3-2}
\begin{split}
J_1^k+J_5^k&=\left\langle \nabla^k u^+,
\frac{1}{\beta_2}\partial_t\nabla\nabla^k
n^+\right\rangle+\left\langle \nabla^k u^-,
\frac{1}{\beta_3}\partial_t\nabla\nabla^k n^-\right\rangle\\
&=\left\langle \nabla^k{\rm div} u^+,
\frac{1}{\beta_2}\partial_t\nabla^k n^+\right\rangle+\left\langle
\nabla^k{\rm div} u^-,
\frac{1}{\beta_3}\partial_t\nabla^k n^-\right\rangle\\
&=\frac{1}{\beta_2}\|\nabla^k{\rm div}
u^+\|_{L^2}^2+\frac{1}{\beta_2}\|\nabla^k{\rm div} u^-\|_{L^2}^2\\
&\quad-\left\langle\nabla^k{\rm div} u^+, \frac{1}{\beta_2}\nabla^k
F_1\right\rangle-\left\langle \nabla^k{\rm div} u^-,
\frac{1}{\beta_3}\nabla^k F_3\right\rangle.
\end{split}
\end{equation}
Moreover, the last two terms in the right--hand of \eqref{L3-2} can
be bounded by
\begin{equation}\label{L3-3}
\begin{split}
\left|\left\langle\nabla^k{\rm div} u^+, \frac{1}{\beta_2}\nabla^k
F_1\right\rangle\right|&\leq
  C\Big(\|n^+\|_{L^\infty}\|\nabla^{k+1}  u^+\|_{L^2}
   +\|\nabla^k n^+\|_{L^6}\|\nabla  u^+\|_{L^3}\Big)
      \|\nabla^{k+1} u^+\|_{L^2}\\ &\quad+
      C\Big(\|u^+\|_{L^\infty}\|\nabla^{k+1} n^+\|_{L^2}
       +\|\nabla^k u^+\|_{L^6}\|\nabla  n^+\|_{L^3}\Big)\|\nabla^{k+1} u^+\|_{L^2}\\
       &\leq C\delta\Big(\|\nabla^{k+1} n^+\|_{L^2}^2 +\|\nabla^{k+1} u^+\|_{L^2}^2\Big),
\end{split}
\end{equation}
and similarly,
\begin{equation}\label{L3-4}
\begin{split}
\left|\left\langle\nabla^k{\rm div} u^-, \frac{1}{\beta_3}\nabla^k
F_3\right\rangle\right|\leq C\delta\Big(\|\nabla^{k+1} n^-\|_{L^2}^2
+\|\nabla^{k+1} u^-\|_{L^2}^2\Big).
\end{split}
\end{equation}
By virtue of integration by parts and Young inequality, we have
\begin{equation}\label{L3-5}
|J_2^k|+|J_3^k|+|J_6^k|+|J_7^k|\leq \varepsilon\|\nabla^{k+2}
(n^+,n^-)\|_{L^2}^2+C_\varepsilon\|\nabla^{k+1} (u^+,u^-)\|_{L^2}^2,
\end{equation}
where $\varepsilon$ is a sufficiently small positive constant which
will be determined later. Similar to the proof of $I_2^k$, for
$J_4^k$, it holds that
\begin{equation}\label{L3-6}
\begin{split}
|J_4^k| \leq
 &
  C\Big|\Big\langle \nabla^k[(n^++n^-)\nabla n^+],\nabla \nabla^k n^+\Big\rangle\Big|
   +
    C\Big|\Big\langle \nabla^k[(u^+\cdot \nabla )u^+], \nabla \nabla^k n^+\Big\rangle\Big|\\
 &
     +
      C\Big|\Big\langle \nabla^k(\nabla n^+\cdot \nabla u^+),\nabla \nabla^k n^+\Big\rangle\Big|
       +
        C\Big|\Big\langle \nabla^k(\nabla n^+( \nabla u^+)^\tau),\nabla \nabla^k n^+\Big\rangle\Big|\\
        &
          +
           C\Big|\Big\langle \nabla^k[(n^++n^-)\nabla n^-],\nabla \nabla^k n^+\Big\rangle\Big|\\
 \leq
  &
   C\|\nabla^{k+1} n^+\|_{L^2}\Big(\|(n^++n^-)\|_{L^3}\|\nabla^{k+1}n^+\|_{L^6}+
   \|\nabla^k(n^++n^-)\|_{L^6}\|\nabla n^+\|_{L^3}\Big)\\
  &
    +
     C\|\nabla^{k+1} n^+\|_{L^2}\Big(\|u^+\|_{L^\infty}\|\nabla^{k+1}u^+\|_{L^2}+\|\nabla^k u^+\|_{L^6}\|\nabla u^+\|_{L^3}\Big)\\
  &
      +
       C\|\nabla^{k+1} n^+\|_{L^6}\Big(\|\nabla^{k+1}  u^+\|_{L^2}\|\nabla n^+\|_{L^3}+\|\nabla u^+\|_{L^3}
       \|\nabla^{k+1} n^+\|_{L^2}\Big)\\
  &
        +
         C\|\nabla^{k+1} n^+\|_{L^2}\Big(\|(n^++n^-)\|_{L^3}\|\nabla^{k+1}n^-\|_{L^6}
         +\|\nabla^k(n^++n^-)\|_{L^6}\|\nabla n^-\|_{L^3}\Big)\\
\leq
 &
  C\delta\Big (\|\nabla^{k+1} (n^+,n^-) \|_{H^1}^2
       +
        \|\nabla^{k+1} u^+\|^2\Big).
\end{split}
\end{equation}
Similarly, we have
\begin{equation}\label{L3-7}
|J_8^k|
 \leq
  C\delta\Big (\|\nabla^{k+1} (n^+,n^-) \|_{H^1}^2
       +
        \|\nabla^{k+1} u^-\|^2\Big).
\end{equation}
\par Finally, putting \eqref{L3-2}--\eqref{L3-7} into \eqref{L3-1},
using the smallness of $\delta$ and choosing $\varepsilon$ small
enough, we get \eqref{L3.1}, and thus complete the proof of Lemma
\ref{Lemma3.3}.
\end{proof}
\section{\leftline {\bf{Proof of Theorem \ref{1mainth}.}}}
\setcounter{equation}{0}
\subsection{Proof of global existence and uniqueness}
In this subsection, we shall show global existence and uniqueness of
solutions stated in Theorem \ref{1mainth}. By virtue of the classic
local existence results in \cite{Mat1, Mat2} and the continuation in
time of the local solution, we see that to prove the global
existence result of Theorem 1.1, it suffices to close the a priori
assumption \eqref{3.1} and prove the energy estimate \eqref{1.17}.
To begin with, we define the following two time--weighted energy
functionals
\begin{equation}\label{4.1} E_k^{\ell}(t)=\sup\limits_{0\leq\tau\leq t}\Big
\{(1+\tau)^{\frac{3}{4}+\frac{k}{2}}\Big(\|\nabla^k(\beta^+n^++\beta^-n^-,u^+,u^-)(\tau)\|_{H^{\ell-k}}+
\|\nabla^{k+1}(n^+, n^-)(\tau)\|_{H^{\ell-k}}\Big)\Big \},
\end{equation}
for $0\leq k\leq \ell$, and
\begin{equation}\label{4.2} E_0(t)=\sup\limits_{0\leq\tau\leq t}\Big
\{(1+\tau)^{\frac{1}{4}} \|(n^+, n^-)(\tau)\|_{L^2}\Big \}.
\end{equation}
Choosing a sufficiently large positive constant $D_1$, and computing
$\eqref{L1.1}\times D_1+\eqref{L3.1}$, and then summing up the
resultant inequality from $k=0$ to $\ell$, we have from the
smallness of $\delta$ that
\begin{equation}\begin{split}\label{4.3}\frac{\mathrm{d}}{\mathrm{d}t}\mathcal
E_0^{\ell}(t)&+C\Big(\|\nabla(u^+,
u^-)(t)\|^2_{H^{\ell}}+\|\nabla^2(n^+,
n^-)(t)\|^2_{H^{\ell}}+\|\nabla(\beta^+n^++\beta^-n^-)(t)\|^2_{H^{\ell}}\Big)\\
\leq & C\delta\Big(\|\nabla(n^+, n^-)(t)\|^2_{L^{2}}+\|(u^+,
u^-)(t)\|^2_{L^{2}}\Big),
\end{split}\end{equation}
where
\begin{align*}
\mathcal E_0^{\ell}(t)&=\frac{D_1}{2} \frac{\rm d}{{\rm
d}t}\left\{\|\left(\beta^+
n^++\beta^-n^-\right)\|_{H^\ell}^2+\frac{\sigma^+}{\beta_2}
 \|\nabla n^+\|_{H^\ell}^2+\frac{\sigma^-}{\beta_3} \|\nabla n^-\|_{H^\ell}^2
 + \frac{1}{\beta_2} \|u^+\|_{H^\ell}^2+\frac{1}{\beta_3}
 \|u^-\|_{H^\ell}^2
\right\}\\
 &\quad+\displaystyle\sum_{k=0}^\ell\left\{\left\langle \nabla^k u^+, \frac{1}{\beta_2}\nabla \nabla^k n^+\right\rangle
 +\left\langle\nabla^k u^-,\frac{1}{\beta_3}\nabla\nabla^k
 n^-\right\rangle\right\},
\end{align*}
which is equivalent to
$\|(\beta^+n^++\beta^-n^-)(t)\|^2_{H^{\ell}}+\|\nabla(n^+,
n^-)(t)\|^2_{H^{\ell}}+\|(u^+, u^-)(t)\|^2_{H^{\ell}}$ since $D_1$
is large enough. Then, \eqref{4.3} together with Lemma \ref{lh2}
gives
 \begin{equation}\label{4.4}\frac{\mathrm{d}}{\mathrm{d}t}\mathcal
E_0^{\ell}(t)+D_2\mathcal E_0^{\ell}(t)\le
 C\Big(\|(\beta^+n^{+,l}+\beta^-n^{-,l})(t)\|^2_{L^2}+\|\nabla( n^{+,l}, n^{-,l})(t)\|^2_{L^2}+\|( u^{+,l}, u^{-,l})(t)\|^2_{L^2}
\Big),
\end{equation}
where $D_2$ is a positive constant independent of $\delta$.
\par Defining $U=(n^+, u^+, n^-, u^-)^t$ and $\mathcal
F=(F^1,F^2,F^3,F^4)^t$, it follows from Duhamel's principle that
\begin{equation}\label{4.5} U^l=\text{e}^{t\mathcal{B}}U^l(0)+\int_0^t\text{e}^{(t-\tau)\mathcal{B}}\mathcal
F^l(\tau)\mathrm{d}\tau,
\end{equation}
which together with Plancherel theorem, integration by parts,
Proposition \ref{2.5}, Lemma \ref{lh1}, \eqref{4.1} and \eqref{4.2}
implies
\begin{equation}\begin{split}\label{4.6}
\|(u^{+,l}, u^{-,l})\|_{L^2}&\leq C (1+t)^{-\frac{3}{4}}\|
U(0)\|_{L^1}+\int_0^t(1+t-\tau)^{-\frac{5}{4}}
 \|\mathcal(n^+u^+, n^-u^-)(\tau)\|_{L^1}\mathrm{d}\tau\\
 &\quad+\int_0^t
 \|\text{e}^{(t-\tau)\mathcal{B}}(F_2, F_4)(\tau)\|_{L^2}\mathrm{d}\tau\\
&\leq CK_0(1+t)^{-\frac{3}{4}}+\int_0^t(1+t-\tau)^{-\frac{5}{4}}
 (1+\tau)^{-1}E_0(t){E}_0^\ell(t)\mathrm{d}\tau\\
&\quad+\int_0^t
 \|\text{e}^{(t-\tau)\mathcal{B}}(F_2, F_4)(\tau)\|_{L^2}\mathrm{d}\tau\\
&\leq
(1+t)^{-\frac{3}{4}}\left(CK_0+E_0(t){E}_0^\ell(t)\right)+\int_0^t
 \|\text{e}^{(t-\tau)\mathcal{B}}(F_2, F_4)(\tau)\|_{L^2}\mathrm{d}\tau.
\end{split}\end{equation} As mentioned before, the strongly coupling terms
like $g_+\left(n^{+}, n^{-}\right)
\partial_{i} n^{+}+\bar{g}_{+}\left(n^{+}, n^{-}\right) \partial_{i}
n^{-}$ in \eqref{2.3} and $g_-\left(n^{+}, n^{-}\right)
\partial_{i} n^{-}+\bar{g}_{-}\left(n^{+}, n^{-}\right) \partial_{i}
n^{+}$ in \eqref{2.5} devote the slowest time--decay rates to the
third term on the right--side of \eqref{4.6}, which prevents us from
deriving the desired decay rates of $\|(u^{+,l}, u^{-,l})\|_{L^2}$.
To overcome this difficulty, the key idea here is to make full use
of good properties of $\beta^+n^{+}+\beta^-n^{-}$, and cleverly
rewrite them  as follows:
\begin{align}\label{4.7}
&g_+\left(n^{+}, n^{-}\right)
\partial_{i} n^{+}+\bar{g}_{+}\left(n^{+}, n^{-}\right) \partial_{i}
n^{-}\notag\\
&=\left(\frac{\mathcal{C}^2\rho^-(n^++1,n^-+1)}{\rho^+(n^++1,
n^-+1)}-\beta_1\right)\partial_i
n^++\left(\mathcal{C}^2(n^++1,n^-+1)-\beta_2\right)\partial_i n^-\notag\\
&=\frac{1}{\beta_1}\left(\frac{\mathcal{C}^2\rho^-(n^++1,n^-+1)}{\rho^+(n^++1,
n^-+1)}-\beta_1\right)\left(\beta_1\partial_i n^++\beta_2\partial_i
n^-\right)\notag\\
&\quad-\mathcal{C}^2(n^++1,n^-+1)\left[\left(\frac{\bar{\rho}^+\rho^-(n^++1,n^-+1)}
{\bar{\rho}^-\rho^+(n^++1,n^-+1)}-1\right)\right]\partial_i n^-\notag\\
&=\partial_i\left[\frac{1}{\beta_1}\left(\frac{\mathcal{C}^2\rho^-(n^++1,n^-+1)}{\rho^+(n^++1,
n^-+1)}-\beta_1\right)\left(\beta_1n^++\beta_2
n^-\right)\right]\\
&\quad-
\partial_i\left[\frac{1}{\beta_1}\left(\frac{\mathcal{C}^2\rho^-(n^++1,n^-+1)}{\rho^+(n^++1,
n^-+1)}-\beta_1\right)\right]\left(\beta_1n^++\beta_2
n^-\right)\notag\\
&\quad-\mathcal{C}^2(n^++1,n^-+1)\left(\frac{\bar{\rho}^+\rho^-(n^++1,n^-+1)}
{\bar{\rho}^-\rho^+(n^++1,n^-+1)}-1\right)\partial_i n^-\notag\\
&=-\partial_i {G}^i_2-
\partial_i\left[\frac{1}{\beta_1}\left(\frac{\mathcal{C}^2\rho^-(n^++1,n^-+1)}{\rho^+(n^++1,
n^-+1)}-\beta_1\right)\right]\left(\beta_1n^++\beta_2
n^-\right)\notag\\
&\quad-\mathcal{C}^2(n^++1,n^-+1)\left(\frac{\bar{\rho}^+\rho^-(n^++1,n^-+1)}
{\bar{\rho}^-\rho^+(n^++1,n^-+1)}-1\right)\partial_i n^-,\notag
\end{align}
and
\begin{align}\begin{split}\label{4.8}
&g_-\left(n^{+}, n^{-}\right)
\partial_{i} n^{-}+\bar{g}_{-}\left(n^{+}, n^{-}\right) \partial_{i}
n^{+}\\
&=\left(\frac{\mathcal{C}^2\rho^+(n^++1,n^-+1)}{\rho^-(n^++1,
n^-+1)}-\beta_4\right)\partial_i
n^-+\left(\mathcal{C}^2(n^++1,n^-+1)-\beta_3\right)\partial_i n^+\\
&=\frac{1}{\beta_4}\left(\frac{\mathcal{C}^2\rho^+(n^++1,n^-+1)}{\rho^-(n^++1,
n^-+1)}-\beta_4\right)\left(\beta_3\partial_i n^++\beta_4\partial_i
n^-\right)\\
&\quad-\mathcal{C}^2(n^++1,n^-+1)\left(\frac{\bar{\rho}^-\rho^+(n^++1,n^-+1)}
{\bar{\rho}^+\rho^-(n^++1,n^-+1)}-1\right)\partial_i n^+\\
&=\partial_i\left[\frac{1}{\beta_4}\left(\frac{\mathcal{C}^2\rho^+(n^++1,n^-+1)}{\rho^-(n^++1,
n^-+1)}-\beta_4\right)\left(\beta_3n^++\beta_4
n^-\right)\right]\\
&\quad-\partial_i\left[\frac{1}{\beta_4}\left(\frac{\mathcal{C}^2\rho^+(n^++1,n^-+1)}{\rho^-(n^++1,
n^-+1)}-\beta_4\right)\right]\left(\beta_3 n^++\beta_4 n^-\right)\\
&\quad-\mathcal{C}^2(n^++1,n^-+1)\left(\frac{\bar{\rho}^-\rho^+(n^++1,n^-+1)}
{\bar{\rho}^+\rho^-(n^++1,n^-+1)}-1\right)\partial_i n^+\\
&=-\partial_i G_4^i
-\partial_i\left[\frac{1}{\beta_4}\left(\frac{\mathcal{C}^2\rho^+(n^++1,n^-+1)}{\rho^-(n^++1,
n^-+1)}-\beta_4\right)\right]\left(\beta_3 n^++\beta_4 n^-\right)\\
&\quad-\mathcal{C}^2(n^++1,n^-+1)\left(\frac{\bar{\rho}^-\rho^+(n^++1,n^-+1)}
{\bar{\rho}^+\rho^-(n^++1,n^-+1)}-1\right)\partial_i n^+.
\end{split}\end{align}
On the other hand, by noticing the pressure differential $dP$, we
have
\begin{align}\label{4.9}
\nabla P&=\mathcal{C}^2(n^++1,n^-+1)\big[\rho^-(n^++1,n^-+1)\nabla n^++\rho^+(n^++1,n^-+1)\nabla n^-\big]\notag\\
&=\mathcal{C}^2(n^++1,n^-+1)\big[\rho^-(n^++1,n^-+1)-\bar{\rho}^-\big]\nabla
n^+\notag\\
&\quad+\mathcal{C}^2(n^++1,n^-+1)\big[\rho^+(n^++1,n^-+1)-\bar{\rho}^+\big]\nabla n^-\\
&\quad+\big[\mathcal{C}^2(n^++1,n^-+1)-\mathcal{C}^2(1,1)\big]\big[\bar{\rho}^-\nabla
n^++\bar{\rho}^+\nabla
n^-\big]\notag\\
&\quad+\mathcal{C}^2(1,1)\big(\bar{\rho}^-\nabla
n^++\bar{\rho}^+\nabla n^-\big).\notag
\end{align}
This together with H\"{o}lder inequality, Lemma
\ref{1interpolation}, \eqref{3.1}, \eqref{4.1} and \eqref{4.2}
implies
\begin{align}\begin{split}\nonumber
\|\nabla P\|_{L^2}&\leq C\left(\|\nabla(n^+,
n^-)\|_{L^3}\left(\|\rho^+-\bar{\rho}^+\|_{L^6}+\|\rho^--\bar{\rho}^-\|_{L^6}\right)+\|\bar{\rho}^-\nabla
n^++\bar{\rho}^+\nabla n^-\|_{L^2}\right)\\
&\leq C\left(\|\nabla(n^+, n^-)\|_{H^1}\|\nabla
P\|_{L^2}+(1+t)^{-\frac{3}{4}}E_0^\ell(t)\right)\\
&\leq C\left(\delta\|\nabla
P\|_{L^2}+(1+t)^{-\frac{3}{4}}E_0^\ell(t)\right),
\end{split}\end{align}
which together with the smallness of $\delta$ gives
\begin{equation}\label{4.10}
\|\nabla(P, \rho^+, \rho^+)\|_{L^2}\leq
C(1+t)^{-\frac{3}{4}}E_0^\ell(t).
\end{equation}
Therefore, denoting $\bar{P}=P(1,1)$ by the equilibrium state of
pressure $P$, we have from embedding estimate of Riesz potential
that
\begin{align}\begin{split}\label{4.11}
&\left\|P-\bar{P}\right\|_{L^2}\\
&\lesssim\|\Lambda^{-1}\nabla P\|_{L^2}\\
&\lesssim \left(\|\mathcal{C}^2\big(\rho^--\bar{\rho}^-\big)\nabla
n^+\right\|_{L^{\frac{6}{5}}}+\left\|\mathcal{C}^2\big(\rho^+-\bar{\rho}^+\big)\nabla
n^-\right\|_{L^{\frac{6}{5}}}\\
&\quad+\left\|\big(\mathcal{C}^2-\mathcal{C}^2(1,1)\big)\big(\bar{\rho}^-\nabla
n^++\bar{\rho}^+\nabla
n^-\big)\right\|_{L^{\frac{6}{5}}}\\
&\quad+\left(\|\mathcal{C}^2(1,1)\big(\bar{\rho}^- n^++\bar{\rho}^+
n^-\big)\right\|_{L^2}\\
&\lesssim
\left\|\big(\rho^--\bar{\rho}^-\big)\right\|_{L^3}\left\|\nabla
n^+\right\|_{L^{2}}+\left\|\big(\rho^+-\bar{\rho}^+\big)\right\|_{L^3}\left\|\nabla
n^-\right\|_{L^{2}}\\
&\quad+\left\|\big(\mathcal{C}^2-\mathcal{C}^2(1,1)\big)\right\|_{L^3}\left\|\bar{\rho}^-
\nabla n^++\bar{\rho}^+\nabla
n^-\right\|_{L^{2}}+\left\|\bar{\rho}^-
n^++\bar{\rho}^+n^-\right\|_{L^2}\\
 &\lesssim (1+t)^{-\frac{3}{4}}E_0^\ell(t),
\end{split}\end{align}
which particularly gives
\begin{equation}\label{4.12}
\left\|(\rho^+-\bar{\rho}^+, \rho^--\bar{\rho}^-)\right\|_{L^2}
\lesssim(1+t)^{-\frac{3}{4}}E_0^\ell(t).
\end{equation}
Consequently, combining \eqref{4.1}, \eqref{4.7}, \eqref{4.8},
\eqref{4.12} and using the fact that $\frac{\bar{\rho}^\pm\rho^\mp}
{\bar{\rho}^{\mp}\rho^\pm}-1\sim
\rho^+-\bar{\rho}^++\rho^--\bar{\rho}^-$, we conclude that
\begin{align}\label{4.13}
&\int_0^t
 \|\text{e}^{(t-\tau)\mathcal{B}}(F_2,
 F_4)(\tau)\|_{L^2}\mathrm{d}\tau\nonumber\\
 &\leq C\int_0^t(1+t-\tau)^{-\frac{5}{4}} \|(G_2,
 G_4)(\tau)\|_{L^2}\mathrm{d}\tau+C\int_0^t(1+t-\tau)^{-\frac{3}{4}} \|(F_2-\text{div} G_2,
 F_4-\text{div} G_4)(\tau)\|_{L^2}\mathrm{d}\tau\nonumber\\
&\quad +C\int_0^t(1+t-\tau)^{-\frac{5}{4}} \|(n^+,
n^-)(\tau)\|_{L^2}\|(\beta^+n^++\beta^-n^-)(\tau)\|_{L^2}\mathrm{d}\tau\nonumber\\
&\quad+ C\int_0^t(1+t-\tau)^{-\frac{3}{4}} \left(\|(u^+,
u^-)(\tau)\|_{L^2}+\|\nabla(n^+,
n^-)(\tau)\|_{L^2}\right)\left(\|\nabla(u^+,
u^-)(\tau)\|_{L^2}\right.\nonumber\\
&\quad\left.+\|\nabla(n^+,
n^-)(\tau)\|_{L^2}\|(\beta^+n^++\beta^-n^-)(\tau)\|_{L^2}\right)\\
&\quad+\|(\rho^+-\bar{\rho}^+,
\rho^--\bar{\rho}^-)(\tau)\|_{L^2}\left(\|\nabla^2(u^+,
u^-)(\tau)\|_{L^2}+\|\nabla(n^+,
n^-)(\tau)\|_{L^2}\right)\mathrm{d}\tau\nonumber\\
&\leq
 C\int_0^t\left((1+t-\tau)^{-\frac{5}{4}}(1+\tau)^{-1}E_0(t){E}_0^\ell(t)+(1+t-\tau)^{-\frac{3}{4}}(1+\tau)^{-\frac{3}{2}}
 \left(E_0(t)+{E}_0^\ell(t)\right){E}_0^\ell(t)\right)\mathrm{d}\tau\nonumber\\
&\leq
C(1+t)^{-\frac{3}{4}}\left[E_0(t){E}_0^\ell(t)+\left({E}_0^\ell(t)\right)^2\right],\nonumber
\end{align}
where $G_2=(G_2^1, G_2^2, G_2^3)^t$ and $G_4=(G_4^1, G_4^2,
G_4^3)^t$.
 Substituting \eqref{4.13} into \eqref{4.6} yields
\begin{equation}\label{4.14}
\left\|(u^{+,l}, u^{-,l})\right\|_{L^2}\lesssim
(1+t)^{-\frac{3}{4}}\left[K_0+E_0(t)E_0^\ell(t)+\left(E_0^\ell(t)\right)^2\right].
\end{equation}
Similarly, we also have
\begin{equation}\label{4.15}
\|(\beta^+n^{+,l}+\beta^-n^{-,l})(t)\|_{L^2}+\|\nabla( n^{+,l},
n^{-,l})(t)\|_{L^2}\lesssim
(1+t)^{-\frac{3}{4}}\left[K_0+E_0(t)E_0^\ell(t)+\left(E_0^\ell(t)\right)^2\right],
\end{equation} and
\begin{equation}\label{4.16}
\|(n^{+,l}, n^{-,l})(t)\|_{L^2}\lesssim
(1+t)^{-\frac{1}{4}}\left[K_0+E_0(t)E_0^\ell(t)+\left(E_0^\ell(t)\right)^2\right].
\end{equation}
Substituting \eqref{4.14}--\eqref{4.16} into \eqref{4.4} yields
 \begin{equation}\nonumber\frac{\mathrm{d}}{\mathrm{d}t}\mathcal E_0^{\ell}(t)+{D_2}\mathcal E_0^{\ell}(t)
 \le C(1+t)^{-\frac{3}{2}}\left[K_0+E_0(t)E_0^\ell(t)+\left(E_0^\ell(t)\right)^2\right]^2.
\end{equation}
Applying Gronwall's inequality to the above inequality, we can infer
that
\begin{equation}\begin{split}\nonumber\mathcal E_0^{\ell}(t)\leq &~\text{e}^{-{D_2}t}\mathcal E_0^{\ell}(0)
+C\int_0^t\text{e}^{-{D_2}(t-\tau)}(1+\tau)^{-\frac{3}{2}}\left[K_0+E_0(t)E_0^\ell(t)+\left(E_0^\ell(t)\right)^2\right]^2\mathrm{d}\tau\\
\leq&
~C(1+t)^{-\frac{3}{2}}\left[K_0+E_0(t)E_0^\ell(t)+\left(E_0^\ell(t)\right)^2\right]^2,
 \end{split}\end{equation}
  which together with \eqref{4.1} implies that
 \begin{equation}\label{4.17}E_0^{\ell}(t)\leq
 C\left[K_0+E_0(t)E_0^\ell(t)+\left(E_0^\ell(t)\right)^2\right].\end{equation}
Next, we deal with $E_0(t)$. By virtue of Lemma \ref{lh2},
\eqref{4.1}, \eqref{4.16} and \eqref{4.17}, we have
\begin{equation}\begin{split}\nonumber\|(n^+, n^-)\|_{L^2}&\leq
C\left(\|(n^{+,l}, n^{-,l})\|_{L^2}+\|(n^{+,h},
n^{-,h})\|_{L^2}\right)\\
&\leq C\left(\|(n^{+,l}, n^{-,l})\|_{L^2}+\|\nabla(n^{+},
n^{-})\|_{L^2}\right)\\
&\leq C
(1+t)^{-\frac{1}{4}}\left[K_0+E_0(t)E_0^\ell(t)+\left(E_0^\ell(t)\right)^2\right],
 \end{split}\end{equation}
which leads to
\begin{equation}\label{4.18}E_0(t)\leq
 C\left[K_0+E_0(t)E_0^\ell(t)+\left(E_0^\ell(t)\right)^2\right].\end{equation}
\par Finally, combining \eqref{4.17} with \eqref{4.18} and using
\eqref{1.16}, we conclude that
\begin{equation}\label{4.19}E_0^\ell(t)+E_0(t)\leq CK_0.\end{equation}
By a standard continuity argument, this closes the a priori
estimates \eqref{3.1} immediately since $\delta_0$ is sufficiently
small. This in turn allows us to integrate \eqref{4.4} directly in
time, to obtain
\begin{align*}
\|\nabla(n^+, n^-)(t)\|_{H^{\ell}}^2&+\|(u^+,u^-)(t)\|_{H^{\ell}}^2+
\|(\beta^+n^++\beta^-n^-)(t)\|_{H^{\ell}}^2\\
&\displaystyle+\int_0^t\Big(\|\nabla(n^+,
n^-)(\tau)\|_{H^{\ell}}^2+\|(u^+,u^-)(\tau)\|_{H^{\ell}}^2+
\|(\beta^+n^++\beta^-n^-)(\tau)\|_{H^{\ell}}^2\Big)\textrm{d}\tau\leq
CK_0^2,\end{align*} which together with \eqref{4.19} implies
\eqref{1.17} immediately. Therefore, we have completed the global
existence result of Theorem \ref{1.1}.
\subsection{Proof of upper bounds on decay rates}
In this subsection, we devote ourselves to proving the upper optimal
convergence rate of the solution stated in \eqref{1.18}-\eqref{1.21}
of Theorem \ref{1mainth}. We first show \eqref{1.19}-\eqref{1.21}.
Noticing \eqref{4.1} and \eqref{4.19}, it suffices to prove that
$E_j^{\ell}(t)\leq CK_0$, for any $1\le j\le \ell$. We will make
full use of the low--frequency and high--frequency decomposition,
and employ key linear convergence estimates obtained in Section 2
and uniform nonlinear energy estimates obtained in Section 3 to
achieve this goal by induction.

\begin{Theorem}\label{3.2mainth} Assume that the  hypotheses of Theorem \ref{1mainth}
and \eqref{1.20} are in force. Then there exists a positive constant
$C$ independent of $t$, such that
\begin{equation}\nonumber E_j^{\ell}(t)\le CK_0,
\end{equation}
for $1\le j\le \ell$.
\end{Theorem}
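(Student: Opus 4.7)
The plan is to proceed by induction on $j$. The base case $j=0$ is already contained in \eqref{4.19}, so I assume $E_i^\ell(t)\leq CK_0$ for every $0\leq i\leq j-1$ with $j\geq 1$ and work at level $j$. First I would reproduce the energy architecture of Subsection~4.1 one ladder rung higher: applying $\nabla^k$ for $j\leq k\leq \ell$ to Lemmas \ref{Lemma3.1} and \ref{Lemma3.3}, forming the combination $D_1\cdot\eqref{L1.1}+\eqref{L3.1}$ with $D_1$ large, and summing over $k$, one arrives at a functional $\mathcal E_j^\ell(t)$ equivalent to
\[
\|\nabla^j(\beta^+n^++\beta^-n^-)\|_{H^{\ell-j}}^2+\|\nabla^{j+1}(n^+,n^-)\|_{H^{\ell-j}}^2+\|\nabla^j(u^+,u^-)\|_{H^{\ell-j}}^2,
\]
for which the high/low frequency splitting yields a Lyapunov-type inequality
\[
\tfrac{d}{dt}\mathcal E_j^\ell(t)+D_2\mathcal E_j^\ell(t)\lesssim \|\nabla^j(\beta^+n^{+,l}+\beta^-n^{-,l})\|_{L^2}^2+\|\nabla^{j+1}(n^{+,l},n^{-,l})\|_{L^2}^2+\|\nabla^j(u^{+,l},u^{-,l})\|_{L^2}^2,
\]
modulo an absorbable $\delta$-small remainder from the cubic terms controlled by \eqref{3.1}.

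Next I would estimate each low-frequency piece via the Duhamel formula $U^l=e^{t\mathcal B}U^l(0)+\int_0^t e^{(t-\tau)\mathcal B}\mathcal F^l(\tau)\,d\tau$ together with the linear decay rates from Proposition~\ref{Proposition2.5}. The free term contributes $K_0(1+t)^{-3/4-j/2}$. The transport-type pieces $n^\pm u^\pm$ in $F_1,F_3$ are handled exactly as in \eqref{4.6} by shifting one derivative onto the semigroup. The delicate contributions are $g_\pm\partial_i n^\pm+\bar g_\pm\partial_i n^\mp$ in $F_2,F_4$; as in \eqref{4.7}--\eqref{4.8}, I would write them as $-\partial_i G_{2,4}^i+(\text{good terms})$, where the good terms only involve the quickly decaying quantities $\beta^+n^++\beta^-n^-$ and $\rho^\pm-\bar\rho^\pm$ (the latter being controlled through the pressure differential \eqref{4.9}--\eqref{4.12}). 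Shifting the extra $\partial_i$ on $G_{2,4}$ onto the semigroup preserves the desired rate $(1+t-\tau)^{-5/4-j/2}$ in the kernel.

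The main obstacle, as flagged in the statement, is that a naive application of $\nabla^j$ to the quasilinear terms $l_\pm(n^+,n^-)\partial^2 u^\pm$, $\mu^\pm h_\pm \partial n\,\partial u^\pm$, etc., generates derivatives of order $\ell+2$ on the velocities, which do not lie in the solution space $H^\ell$. To bypass this I would split the Duhamel integral as $\int_0^{t/2}+\int_{t/2}^t$. On $[0,t/2]$ the kernel still enjoys the full low-frequency decay $(1+t-\tau)^{-3/4-j/2}$, so one uses the $L^1\!\to\! L^2$ estimate of Proposition~\ref{Proposition2.5} and absorbs all the derivatives onto the kernel, reducing the nonlinear input to a product of $L^2$-norms already controlled by $E_0^\ell(t)\leq CK_0$ and the inductive hypotheses $E_i^\ell(t)\leq CK_0$ for $i<j$. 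On $[t/2,t]$ one instead uses the $L^2\!\to\! L^2$ short-time decay, keeps at most $\ell+1$ derivatives on $u^\pm$ (which sit in $L^2_tL^2_x$ by the dissipation integral in \eqref{1.17}), and borrows a $(1+\tau)^{-1/2}$ factor from the induction bounds. Combining both intervals, together with Lemma \ref{es-product} for the product estimates, yields
\[
\|\nabla^j(u^{+,l},u^{-,l})\|_{L^2}+\|\nabla^j(\beta^+n^{+,l}+\beta^-n^{-,l})\|_{L^2}+\|\nabla^{j+1}(n^{+,l},n^{-,l})\|_{L^2}\lesssim (1+t)^{-\tfrac{3}{4}-\tfrac{j}{2}}\bigl(K_0+E_j^\ell(t)E_0^\ell(t)+(E_j^\ell(t))^2\bigr).
\]

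Feeding this bound into the Lyapunov inequality and applying Gronwall as in \eqref{4.17}, one deduces $\mathcal E_j^\ell(t)\lesssim (1+t)^{-3/2-j}\bigl(K_0+E_j^\ell(t)E_0^\ell(t)+(E_j^\ell(t))^2\bigr)^2$, hence
\[
E_j^\ell(t)\leq C\bigl(K_0+E_j^\ell(t)E_0^\ell(t)+(E_j^\ell(t))^2\bigr).
\]
A standard continuity argument using the smallness of $K_0\leq\delta_0$ and the already-proved bound $E_0^\ell(t)\leq CK_0$ then gives $E_j^\ell(t)\leq CK_0$, completing the induction. I expect the two-interval splitting for controlling $\nabla^{\ell+2}u^\pm$ to be the technical crux; the divergence-plus-good-terms rewriting of the strongly coupled density gradients (and the good decay of $\rho^\pm-\bar\rho^\pm$ from the pressure identity) is the second essential ingredient, without which the slowest nonlinearities would produce a spurious $\ln(1+t)$ factor as in \eqref{4.6}.
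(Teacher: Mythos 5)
Your overall architecture is the paper's: reduce to a lemma proved by induction on $j$ (base case from \eqref{4.19}), build the Lyapunov inequality \eqref{4.23} by summing the $k$-th level estimates of Lemmas \ref{Lemma3.1}--\ref{Lemma3.3} from $k=j$ to $\ell$, push the low-frequency forcing through Duhamel, rewrite the strongly coupled density gradients as $-\partial_i G^i_{2,4}+(\text{good terms})$ as in \eqref{4.7}--\eqref{4.8}, exploit the fast decay of $\rho^\pm-\bar\rho^\pm$ via the pressure identity \eqref{4.9}--\eqref{4.12} and its higher-order analogue \eqref{4.24}--\eqref{4.25}, split the Duhamel integral at $t/2$, and close by Gronwall and smallness of $K_0$. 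You correctly flagged the time-splitting as the crux for avoiding $\nabla^{\ell+2}u^\pm$. Up to this point you are aligned with the paper.

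The gap is your mechanism on $[t/2,t]$. You propose an $L^2\!\to\! L^2$ short-time semigroup bound combined with the $L^2_t L^2_x$ dissipation from \eqref{1.17} to absorb up to $\ell+1$ derivatives on $u^\pm$. That route does not close. First, \eqref{1.17} controls $\int_0^t\|(u^+,u^-)\|_{H^\ell}^2\,d\tau$, i.e.\ $\nabla^\ell u^\pm$ in $L^2_t$, not $\nabla^{\ell+1}u^\pm$. Second, even if it did, converting that $L^2_t$ bound into a pointwise-in-$t$ Duhamel estimate requires Cauchy--Schwarz, which introduces a factor of order $t^{1/2}$ on $[t/2,t]$ that destroys the target rate $(1+t)^{-3/4-j/2}$; your appeal to ``borrowing a $(1+\tau)^{-1/2}$ factor'' is too vague to repair this. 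What the paper actually does in \eqref{4.26}--\eqref{4.31} is stay within the $L^1\!\to\! L^2$ (Fourier-side $L^\infty$) framework: it writes $K^j_i$ on $[t/2,t]$ with kernel $(1+t-\tau)^{-5/4}$ and data $\||\xi|^j\widehat{\cdot}\,\|_{L^\infty}$, and then in \eqref{4.31} trades powers of $|\xi|$ (harmless on low frequencies) so that each quadratic piece is bounded, after Lemma \ref{es-product}, by $L^2$-products in which at most $\nabla^j u^\pm$ ($j\le\ell$) ever appears --- e.g.\ the term $l_\pm\,\nabla^2 u^\pm$ is taken in $\nabla^{\max\{0,j-2\}}(\cdot)$, not $\nabla^{j}(\cdot)$. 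This re-balancing, combined with the fast decay of $\rho^\pm-\bar\rho^\pm$ (and of $\beta^+n^++\beta^-n^-$) on the other factor, delivers $K_0^2(1+t)^{-1-j/2}+K_0(1+t)^{-3/2-j/2}E_j^\ell(t)$ without ever leaving the solution space. You should replace your $[t/2,t]$ argument by this $|\xi|$-rebalancing, at which point the rest of your proposal goes through.
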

\begin{proof} We will employ mathematical inductive method to prove
Theorem \ref{3.2mainth}. Therefore, by noticing \eqref{4.1} and
\eqref{4.19},
 it suffices to prove
the following Lemma \ref{Lemma3.2}. Thus, the proof Theorem
\ref{3.2mainth} is completed.
\end{proof}

\begin{Lemma}\label{Lemma3.2} Assume that the  hypotheses of Theorem \ref{1mainth} and \eqref{1.20}.
If additionally  \begin{equation}\label{4.20}E_{k-1}^{\ell}(t)\le
CK_0,\end{equation} then it holds that
\begin{equation}\label{4.21}E_k^{\ell}(t)\le CK_0,
\end{equation}
for $1\le k\le \ell.$
\end{Lemma}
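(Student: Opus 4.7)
The plan is to mirror the scheme that established $E_0^\ell(t)\le CK_0$ in the global existence proof, now performed at the $k$-th derivative level, using the induction hypothesis $E_{k-1}^\ell(t)\le CK_0$ (together with $E_0(t)\le CK_0$ from the base case) to close the estimate. The first ingredient is a $k$-th order analogue of (4.4): applying Lemma \ref{Lemma3.1} and Lemma \ref{Lemma3.3} at each level $j\in\{k,\ldots,\ell\}$ and summing (with Lemma \ref{Lemma3.1} weighted by a sufficiently large constant $D_1$) followed by Lemma \ref{lh2} to absorb the high-frequency parts of the right-hand side into the dissipation, yields
\[\frac{\mathrm{d}}{\mathrm{d}t}\mathcal{E}_k^\ell(t)+D_2\,\mathcal{E}_k^\ell(t)\le C\Big(\|\nabla^k(\beta^+n^{+,l}+\beta^-n^{-,l})(t)\|_{L^2}^2+\|\nabla^{k+1}(n^{+,l},n^{-,l})(t)\|_{L^2}^2+\|\nabla^k(u^{+,l},u^{-,l})(t)\|_{L^2}^2\Big),\]
where $\mathcal{E}_k^\ell(t)$ is equivalent to the square of the quantity inside the supremum defining $E_k^\ell(t)$.

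The second step is to bound each low-frequency norm on the right-hand side by $K_0^2(1+t)^{-3/2-k}$. Applying $\nabla^k$ to the Duhamel formula (4.5) and invoking Proposition \ref{Proposition2.5} produces the standard contribution $K_0(1+t)^{-3/4-k/2}$ from the data. For the contribution of $F_2,F_4$, the strongly coupled terms $g_\pm\partial_i n^\pm+\bar g_\pm\partial_i n^\mp$ from (2.3), (2.5) would decay too slowly without further work; one therefore uses the rewriting (4.7)--(4.8) in the form $F_2=-\mathrm{div}\,G_2+R_2$, $F_4=-\mathrm{div}\,G_4+R_4$, whose remainders $R_\pm$ involve only products of the good dissipative variables $\beta^+n^++\beta^-n^-$, $\nabla(n^+,n^-)$, $\rho^\pm-\bar\rho^\pm$ and derivatives of $(u^+,u^-)$. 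Shifting one extra derivative onto the semigroup gives the kernel $(1+t-\tau)^{-3/4-(k+1)/2}$, and the resulting products are controlled by $E_0(\tau),E_0^\ell(\tau),\ldots,E_{k-1}^\ell(\tau)\le CK_0$, yielding an integrable time decay.

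The main obstacle, as flagged in the introduction, is that when $k\ge 1$ this procedure generates products containing $\nabla^{\ell+2}u^\pm$ (coming from $\mu^\pm l_\pm\partial_j^2 u^\pm_i$ in $F_2^i,F_4^i$), which lie outside the solution space $H^\ell$. To circumvent this, I would split $[0,t]=[0,t/2]\cup[t/2,t]$ and further decompose the source into low- and high-frequency parts. On $[0,t/2]$ the kernel $(1+t-\tau)^{-3/4-(k+1)/2}$ is uniformly comparable to $(1+t)^{-3/4-(k+1)/2}$, so one may apply the $L^1\to L^2$ smoothing estimate of the low-frequency semigroup to the full forcing and bound $\|G_2,G_4\|_{L^1}$ by $K_0^2(1+\tau)^{-3/4}$ using Hölder together with $E_0,E_0^\ell\le CK_0$. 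On $[t/2,t]$ one instead uses the $L^2\to L^2$ bound on the semigroup and estimates the high-frequency part of the forcing via the elementary inequality $\|\nabla^k f^h\|_{L^2}\lesssim\|\nabla^{k+1}f\|_{L^2}$ from Lemma \ref{lh2}, which turns the problematic $\nabla^{\ell+2}u^\pm$ into $\nabla^{\ell+1}u^\pm$ inside norms already controlled by the induction hypothesis $E_{k-1}^\ell(\tau)\le CK_0$.

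Finally, assembling the three pieces gives $\|\nabla^k(u^{+,l},u^{-,l})(t)\|_{L^2}\lesssim K_0(1+t)^{-3/4-k/2}$ and, by analogous arguments, the same bound for $\|\nabla^{k+1}(n^{+,l},n^{-,l})(t)\|_{L^2}$ and $\|\nabla^k(\beta^+n^{+,l}+\beta^-n^{-,l})(t)\|_{L^2}$. Inserting these into the $k$-level energy inequality and applying Gronwall yields $\mathcal{E}_k^\ell(t)\lesssim K_0^2(1+t)^{-3/2-k}$, which is precisely $E_k^\ell(t)\le CK_0$, closing the induction.
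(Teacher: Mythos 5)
Your overall architecture matches the paper's proof of Lemma \ref{Lemma3.2}: the $k$-level energy inequality with Lemma \ref{lh2} absorbing high-frequency parts into dissipation (the paper's \eqref{4.22}--\eqref{4.23}); the $G_2, G_4$ rewriting \eqref{4.7}--\eqref{4.8} to reduce the bad coupling terms; the splitting $[0,t]=[0,t/2]\cup[t/2,t]$ with the $L^1$--based semigroup estimate on $[0,t/2]$; and closing by Gronwall. However, the mechanism you propose for the critical step --- the viscous term $l_\pm\,\nabla^2 u^\pm$, which upon differentiation would produce $\nabla^{\ell+2}u^\pm$ --- does not work, and this is exactly the point the introduction flags as the main new difficulty at level $k\ge1$.

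You write that on $[t/2,t]$ one uses an $L^2\to L^2$ semigroup bound and the inequality $\|\nabla^k f^h\|_{L^2}\lesssim\|\nabla^{k+1}f\|_{L^2}$ from Lemma \ref{lh2} to ``turn $\nabla^{\ell+2}u^\pm$ into $\nabla^{\ell+1}u^\pm$.'' Three things go wrong here. First, the quoted inequality moves in the wrong direction (it estimates a \emph{lower}-order norm of the high-frequency part by a \emph{higher}-order full norm), so it cannot reduce the number of derivatives; you would need the other inequality in Lemma \ref{lh2}, for the low-frequency part. Second, the object being estimated is $\nabla^j(u^{+,l},u^{-,l})$, so it is the \emph{low}-frequency part of the forcing (the low-frequency cutoff commutes with the semigroup), not the high-frequency part. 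Third, and decisively, $\nabla^{\ell+1}u^\pm$ is \emph{not} controlled by the induction hypothesis: $E_{k-1}^{\ell}(t)$ tracks at most $\nabla^{\ell}u^\pm$ (since $u^\pm\in H^\ell$), so landing on $\nabla^{\ell+1}u^\pm$ would still leave you outside the solution space. The paper's actual mechanism in \eqref{4.31} is different: working with $\big\||\xi|^{j}\widehat{F^l}\big\|_{L^\infty}$ for the low-frequency forcing, one exploits that $|\xi|^j\le\eta^2\,|\xi|^{\max\{0,j-2\}}$ on $\{|\xi|\le\eta\}$, and so throws away two powers of $|\xi|$ precisely on the term $(\rho^\pm-\bar\rho^\pm)\nabla^2u^\pm$: one then only needs $\big\|\nabla^{\max\{0,j-2\}}\big((\rho^\pm-\bar\rho^\pm)\nabla^2u^\pm\big)\big\|_{L^1}$, which keeps the total derivative count on $u^\pm$ at $\max\{2,j\}\le\ell$. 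The freedom to shed powers of $|\xi|$ on the low-frequency support is affordable on $[t/2,t]$ because there $\tau\ge t/2$, so the loss in $\tau$-decay is harmless. Without this specific observation the proof does not close.
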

\begin{proof}

We will combine the key linear estimates with delicate nonlinear
energy estimates based on good properties of the low--frequency and
high--frequency decomposition to prove Theorem \ref{3.2mainth}, and
the process involves the following three steps.\par {Step 1.} Energy
estimates on
$\|\nabla^j(\beta^+n^++\beta^-n^-)\|_{H^{\ell-j}}^2+\|\nabla^{j+1}(n^+,
n^-)\|_{H^{\ell-j}}^2+\|\nabla^j( u^+,u^-)\|_{H^{\ell-j}}^2$.
Choosing a sufficiently large positive constant $D_3$, and computing
$\eqref{L1.1}\times D_3+\eqref{L3.1}$, and then summing up the
resultant inequality from $k=j$ to $\ell$, we have from the
smallness of $\delta$ that
\begin{equation}\begin{split}\label{4.22}\frac{\mathrm{d}}{\mathrm{d}t}\mathcal
E_j^{\ell}(t)&+C\Big(\|\nabla^{j+1}(u^+,
u^-)(t)\|^2_{H^{\ell-j}}+\|\nabla^{j+2}(n^+,
n^-)(t)\|^2_{H^{\ell-j}}+\|\nabla^{j+1}(\beta^+n^++\beta^-n^-)(t)\|^2_{H^{\ell-j}}\Big)\\
\leq & C\delta\Big(\|\nabla^{j+1}(n^+,
n^-)(t)\|^2_{L^{2}}+\|\nabla^j(u^+, u^-)(t)\|^2_{L^{2}}\Big),
\end{split}\end{equation}
where
\begin{equation}\begin{split}\notag
\mathcal E_j^{\ell}&=\frac{D_3}{2} \frac{\rm d}{{\rm
d}t}\left\{\|\nabla^j\left(\beta^+
n^++\beta^-n^-\right)\|_{H^{\ell-j}}^2+\frac{\sigma^+}{\beta_2}
 \|\nabla^{j+1} n^+\|_{H^{\ell-j}}^2+\frac{\sigma^-}{\beta_3} \|\nabla^{j+1} n^-\|_{H^{\ell-j}}^2\right.\\
 &\quad+\left.\frac{1}{\beta_2} \|\nabla^ju^+\|_{H^{\ell-j}}^2+\frac{1}{\beta_3}
 \|\nabla^j u^-\|_{H^{\ell-j}}^2
\right\}+\displaystyle\sum_{k=j}^\ell\left\{\left\langle \nabla^k
u^+, \frac{1}{\beta_2}\nabla \nabla^k n^+\right\rangle
 +\left\langle\nabla^k u^-,\frac{1}{\beta_3}\nabla\nabla^k
 n^-\right\rangle\right\},
\end{split}\end{equation}
which is equivalent to
$\|\nabla^j(\beta^+n^++\beta^-n^-)(t)\|^2_{H^{\ell-j}}+\|\nabla^{j+1}(n^+,
n^-)(t)\|^2_{H^{\ell-j}}+\|\nabla^j(u^+, u^-)(t)\|^2_{H^{\ell-j}}$
since $D_3$ is large enough. Then, \eqref{4.22} together with Lemma
\ref{lh2} yields
 \begin{equation}\label{4.23}\frac{\mathrm{d}}{\mathrm{d}t}\mathcal
E_j^{\ell}(t)+D_4\mathcal E_j^{\ell}(t)\le
 C\left(\|\nabla^j(\beta^+n^{+,l}+\beta^-n^{-,l})\|^2_{H^{\ell-j}}+\|\nabla^{j+1}(n^{+,l},
n^{-,l})\|^2_{H^{\ell-j}}+\|\nabla^j( u^{+,l},
u^{-,l})\|^2_{H^{\ell-j}} \right),
\end{equation} where $D_4$ is a positive constant
independent of $\delta$.\par {Step 2.} Decay estimates on
$\|\nabla^j(\beta^+n^{+,l}+\beta^-n^{-,l})\|^2_{H^{\ell-j}}+\|\nabla^{j+1}(n^{+,l},
n^{-,l})\|^2_{H^{\ell-j}}+\|\nabla^j( u^{+,l},
u^{-,l})\|^2_{H^{\ell-j}}$. To begin with, we deal with $\|\nabla^j(
u^{+,l}, u^{-,l})\|^2_{H^{\ell-j}}$. To do this, using \eqref{4.1}
and the inequality above \eqref{4.10}, we have
\begin{equation}\label{4.24}
\|\nabla(P, \rho^+, \rho^+)\|_{L^2}\leq
C(1+t)^{-\frac{5}{4}}E_1^\ell(t).
\end{equation}
For $2\leq j\leq\ell$, by employing \eqref{4.9}, H\"{ol}lder
inequality, Lemma \ref{es-product}, \eqref{4.1}, \eqref{4.19} and
\eqref{4.20}, we have
\begin{align*}
\left\|\nabla^j P\right\|_{L^2}&\leq
\left\|\nabla^{j-1}\left(\mathcal{C}^2(\rho^--\bar{\rho}^-)\nabla
n^+\right)\right\|_{L^2}+\left\|\nabla^{j-1}\left(\mathcal{C}^2(\rho^+-\bar{\rho}^+)\nabla
n^-\right)\right\|_{L^2}\\
&\quad+\left\|\nabla^{j-1}\left(\mathcal{C}^2(\bar{\rho}^-\nabla
n^++\bar{\rho}^+\nabla n^-)\right)\right\|_{L^2}\\
&\lesssim \left(\left\|\nabla^{j-1}\left(\rho^+,
\rho^-\right)\right\|_{L^6}\left\|\mathcal{C}^2\right\|_{L^3}
+\left\|\left(\rho^+-\bar{\rho}^+,\rho^--\bar{\rho}^-\right)\right\|_{L^3}\left\|\nabla
^{j-1}\mathcal{C}^2\right\|_{L^6} \right)\left\|\nabla(n^+,
n^-)\right\|_{L^\infty}\\
&\quad+\left\|\mathcal{C}^2\right\|_{L^\infty}\left\|\left({\rho}^+
-\bar{\rho}^+, {\rho}^-
-\bar{\rho}^-\right)\right\|_{L^2}\left\|\nabla^j(n^+,n^-)\right\|_{L^2}\\
&\quad+\left\|\mathcal{C}^2\right\|_{L^\infty}\left\|\nabla^{j}\left(\bar{\rho}^-
n^++\bar{\rho}^+
n^-\right)\right\|_{L^2}+\left\|\nabla^{j-1}\mathcal{C}^2\right\|_{L^6}\left\|\left(\bar{\rho}^-
\nabla n^++\bar{\rho}^+ \nabla n^-\right)\right\|_{L^3}\\
&\lesssim \left(K_0\left\|\nabla^{j}P\right\|_{L^2}
+\left\|\left(\rho^+-\bar{\rho}^+,\rho^--\bar{\rho}^-\right)\right\|_{H^1}\left\|\nabla
^{j}(n^+, n^-)\right\|_{L^2}\left\|\nabla(n^+,
n^-)\right\|_{H^2} \right)\\
&\quad+\left\|\nabla^{j}\left(\bar{\rho}^- n^++\bar{\rho}^+
n^-\right)\right\|_{L^2}+\left\|\nabla^{j}(n^+,
n^-)\right\|_{L^2}\left\|\left(\bar{\rho}^-
\nabla n^++\bar{\rho}^+ \nabla n^-\right)\right\|_{H^1}\\
&\lesssim K_0 \left\|\nabla^{j}P\right\|_{L^2}+K_0
(1+t)^{-1-\frac{j}{2}}+(1+t)^{-\frac{3}{4}-\frac{j}{2}}E_j^\ell(t),
\end{align*}
which together with the smallness of $K_0$ implies
\begin{equation}\label{4.25}
\left\|\nabla^{j}\left(P, \rho^+, \rho^-\right)\right\|_{L^2}\leq C
(1+t)^{-\frac{3}{4}-\frac{j}{2}}\left[K_0+E_j^\ell(t)\right].
\end{equation}
With \eqref{4.24} and \eqref{4.25} in hand, we can borrow similar
arguments used in \eqref{4.6} to get
\begin{align}\label{4.26}
\left\|\nabla^j(u^{+,l}, u^{-,l})\right\|_{L^2}&\lesssim
K_0(1+t)^{-\frac{3}{4}-\frac{j}{2}}+\int_0^{\frac{t}{2}}(1+t-\tau)^{-\frac{3}{4}-\frac{j+1}{2}}
 \left\|(n^+u^+, n^-u^-)(\tau)\right\|_{L^1}\mathrm{d}\tau\nonumber\\
&\quad+\int_{\frac{t}{2}}^t(1+t-\tau)^{-\frac{5}{4}}
 \left\||\xi|^j\left(\widehat{n^+u^+}, \widehat{n^-u^-}\right)(\tau)\right\|_{L^\infty}\mathrm{d}\tau\nonumber\\
&\quad+\int_0^{\frac{t}{2}}(1+t-\tau)^{-\frac{3}{4}-\frac{j+1}{2}}
 \left\|(G_2, G_4)(\tau)\right\|_{L^1}\mathrm{d}\tau\nonumber\\
&\quad+\int_{\frac{t}{2}}^t(1+t-\tau)^{-\frac{5}{4}}
 \left\||\xi|^j\left(\hat{G_2}, \hat{G_4}\right)(\tau)\right\|_{L^\infty}\mathrm{d}\tau\\
&\quad+\int_0^{\frac{t}{2}}(1+t-\tau)^{-\frac{3}{4}-\frac{j}{2}}
 \left\|(F_2-\hbox{div}G_2, F_4-\hbox{div}{G_4})(\tau)\right\|_{L^1}\mathrm{d}\tau\nonumber\\
&\quad+\int_{\frac{t}{2}}^t(1+t-\tau)^{-\frac{5}{4}}
 \left\||\xi|^j\left({\widehat{F_2^l}-\widehat{\hbox{div}G_2^l}}, {\widehat{F_4^l}-\widehat{\hbox{div}G_4^l}}\right)(\tau)\right\|_{L^\infty}
 \mathrm{d}\tau\nonumber\\
&:=K_0(1+t)^{-\frac{3}{4}-\frac{j}{2}}+K^j_1+K^j_2+K^j_3+K^j_4+K^j_5+K^j_6.\nonumber
\end{align}
We shall estimate $K^j_i$ with $i=1,2,\cdots, 6.$ Firstly, it
follows from H\"{o}lder inequality and \eqref{4.19} that
\begin{align}\label{4.27}
\left|K^j_1\right|&\lesssim\int_0^{\frac{t}{2}}(1+t-\tau)^{-\frac{3}{4}-\frac{j+1}{2}}
 \left\|(n^+, n^-)(\tau)\right\|_{L^2}\left\|(u^+, u^-)(\tau)\right\|_{L^2}\mathrm{d}\tau\nonumber\\
 &\lesssim
 K_0^2\int_0^{\frac{t}{2}}(1+t-\tau)^{-\frac{3}{4}-\frac{j+1}{2}}(1+\tau)^{-1}
 \mathrm{d}\tau\\
 &\lesssim K_0^2(1+t)^{-\frac{3}{4}-\frac{j}{2}}.\nonumber
\end{align}
Next, for $K^j_2$, by using Lemma \ref{1interpolation}, Lemma
\ref{es-product}, \eqref{4.1}, \eqref{4.19} and \eqref{4.20}, we
have
\begin{align}\label{4.28}
\left|K^j_2\right|&\lesssim\int_{\frac{t}{2}}^t(1+t-\tau)^{-\frac{5}{4}}
\left\||\nabla^j\left({n^+u^+}, {n^-u^-}\right)(\tau)\right\|_{L^1}\mathrm{d}\tau\nonumber\\
&\lesssim\int_{\frac{t}{2}}^t(1+t-\tau)^{-\frac{5}{4}}
 \left(\left\|\nabla^j(n^+, n^-)(\tau)\right\|_{L^2}\left\|(u^+, u^-)(\tau)\right\|_{L^2}
 +\left\|(n^+, n^-)(\tau)\right\|_{L^2}\left\|\nabla^j(u^+, u^-)(\tau)\right\|_{L^2}\right)\mathrm{d}\tau\nonumber\\
 &\lesssim
 K_0\int_0^{\frac{t}{2}}(1+t-\tau)^{-\frac{3}{4}-\frac{j+1}{2}}(1+\tau)^{-1-\frac{j}{2}}\left(1+E_j^\ell(t)\right)
 \mathrm{d}\tau\\
 &\lesssim K_0(1+t)^{-1-\frac{j}{2}}\left(1+E_j^\ell(t)\right).\nonumber
\end{align}
Similar to the proofs of \eqref{4.27} and \eqref{4.28}, for $K^j_3$
and $K^j_4$, it holds that
\begin{equation}\label{4.29}
\left|K^j_3\right|+\left|K^j_4\right|\lesssim
K_0^2(1+t)^{-\frac{3}{4}-\frac{j}{2}}+K_0(1+t)^{-1-\frac{j}{2}}\left(1+E_j^\ell(t)\right).
\end{equation}
\noindent Employing similar arguments in \eqref{4.13}, we have
\begin{equation}\label{4.30}
\left|K^j_5\right|\lesssim K_0^2(1+t)^{-\frac{3}{4}-\frac{j}{2}},
\end{equation}
where we have used \eqref{4.12} and \eqref{4.19}. The last term
$K^j_6$ is much more complicated. The main idea of our approach is
to make full use of the benefit of the low-frequency and
high-frequency decomposition. To see this, by virtue of \eqref{4.7},
\eqref{4.8}, \eqref{4.19}, \eqref{4.20}, \eqref{4.25}, Lemma
\ref{1interpolation}, Lemma \ref{es-product}, Lemma \ref{lh1} and
Lemma \ref{lh2}, we can bound the term
$\left\||\xi|^j\left({\widehat{F_2^l}-\widehat{\hbox{div}G_2^l}},
{\widehat{F_4^l}-\widehat{\hbox{div}G_4^l}}\right)(\tau)\right\|_{L^\infty}$
by
\begin{align}\label{4.31}&\left\||\xi|^j\left({\widehat{F_2^l}-\widehat{\hbox{div}G_2^l}},
{\widehat{F_4^l}-\widehat{\hbox{div}G_4^l}}\right)(\tau)\right\|_{L^\infty}
\nonumber\\
\lesssim &~\Big{\|}\nabla^{j-1}\left(u^+\cdot\nabla
u^+,u^-\cdot\nabla u^-, (\nabla n^++\nabla n^-)\nabla u^+, (\nabla
n^++\nabla n^-)\nabla u^+\right)(t)\Big{\|}_{L^1}\nonumber\\
&+\Big{\|}\nabla^{j-1}\left( \nabla(n^+,n^-)
\Big(\beta^+n^++\beta^-n^-)\right)(t)\Big{\|}_{L^1} \nonumber\\
&+\Big{\|}\nabla^{j-1}\left
(\Big(\rho^+-\bar{\rho}^++\rho^--\bar{\rho}^-\Big)\nabla n^+,
\Big(\rho^+-\bar{\rho}^++\rho^--\bar{\rho}^-\Big)\nabla
n^-\right)(t)\Big{\|}_{L^1}\nonumber\\
&+\Big{\|}\nabla^{\max\{0, j-2\}}\left
(\Big(\rho^+-\bar{\rho}^++\rho^--\bar{\rho}^-\Big)\nabla^2 u^+,
\Big(\rho^+-\bar{\rho}^++\rho^--\bar{\rho}^-\Big)\nabla^2
u^-\right)(t)\Big{\|}_{L^1}\nonumber\\
\lesssim &~\|(u^+, u^-)(t)\|_{L^2}\|\nabla^j(u^+, u^-)(t)\|_{L^2}
+\|\nabla(u^+, u^-)(t)\|_{L^2}\|\nabla^{j-1}(u^+, u^-)(t)\|_{L^2}\nonumber\\
&+\left\|\nabla(u^+, u^-)(t)\right\|_{L^2}\left\|\nabla^j(n^+,
n^-)(t)\right\|_{L^2}+\left\|\nabla(n^+, n^-)(t)\right\|_{L^2}\left\|\nabla^{j}(u^+, u^-)(t)\right\|_{L^2}\\
&+\left\|\left(\beta^+n^++\beta^-n^-\right)(t)\right\|_{L^2}\left\|\nabla^j(n^+,
n^-)(t)\right\|_{L^2}
+\left\|\nabla(n^+, n^-)(t)\right\|_{L^2}\left\|\nabla^{j-1}\left(\beta^+n^++\beta^-n^-\right)(t)\right\|_{L^2}\nonumber\\
&+\left\|\left(\rho^+-\bar{\rho}^+,
\rho^--\bar{\rho}^-\right)(t)\right\|_{L^2}\left\|\nabla^j(n^+,
n^-)(t)\right\|_{L^2}\nonumber\\
&+\left\|\nabla(n^+,
n^-)(t)\right\|_{L^2}\left\|\nabla^{j-1}\left(\rho^+-\bar{\rho}^+,
\rho^--\bar{\rho}^-\right)(t)\right\|_{L^2}\nonumber\\
&+\left\|\nabla^2(u^+, u^-)(t)\right\|_{L^2}\left\|\nabla^{\max\{0,
j-2\}}\left(\rho^+-\bar{\rho}^+,
\rho^--\bar{\rho}^-\right)(t)\right\|_{L^2}\nonumber\\
&+\left\|\left(\rho^+-\bar{\rho}^+,
\rho^--\bar{\rho}^-\right)(t)\right\|_{L^2}\left\|\nabla^j(u^+,
u^-)(t)\right\|_{L^2}\nonumber\\
\lesssim
&~K_0^2(1+t)^{-1-\frac{j}{2}}+K_0(1+t)^{-\frac{3}{2}-\frac{j}{2}}E_j^\ell(t)\nonumber,
\end{align}
which implies
\begin{equation}\label{4.32}
\left|K^j_6\right|\lesssim
K_0^2(1+t)^{-1-\frac{j}{2}}+K_0(1+t)^{-\frac{3}{2}-\frac{j}{2}}E_j^\ell(t).
\end{equation}
Substituting \eqref {4.27}--\eqref{4.30} and \eqref{4.32} into
\eqref{4.26} gives
\begin{equation}\label{4.33}
\left\|\nabla^j(u^{+,l},u^{-,l})\right\|_{L^2}\lesssim
K_0^2(1+t)^{-\frac{3}{4}-\frac{j}{2}}+K_0(1+t)^{-1-\frac{j}{2}}\left(1+E_j^\ell(t)\right).
\end{equation}
Similarly, we also have
\begin{equation}\begin{split}\label{4.34}
&\|\nabla^j(\beta^+n^{+,l}+\beta^-n^{-,l})(t)\|_{L^2}+\|\nabla^{j+1}(
n^{+,l}, n^{-,l})(t)\|_{L^2}\\
&\hspace{3.5cm}\lesssim
K_0^2(1+t)^{-\frac{3}{4}-\frac{j}{2}}+K_0(1+t)^{-1-\frac{j}{2}}\left(1+E_j^\ell(t)\right).
\end{split}\end{equation}
Putting \eqref{4.33}--\eqref{4.34} into \eqref{4.24} and then
integrating the resultant inequality over $[0, t]$, we have
\begin{equation}\begin{split}\nonumber\mathcal E_j^{\ell}(t)\leq &~\text{e}^{-{D_4}t}\mathcal E_j^{\ell}(0)
+C\int_0^t\text{e}^{-{D_4}(t-\tau)}
(1+\tau)^{-\frac{3}{2}-j}\left[K_0^2+K_0\left(1+E_j^\ell(t)\right)\right]^2\mathrm{d}\tau\\
\leq& ~CK_0^2(1+t)^{-\frac{3}{2}-j}\left(1+E_j^\ell(t)\right)^2,
 \end{split}\end{equation}
which together with the smallness of $K_0$ gives
\begin{equation}\label{4.35}E_j^{\ell}(t)\leq
 CK_0.
\end{equation}
This gives \eqref{1.19}--\eqref{1.21} directly. Finally,
\eqref{1.18} follows from \eqref{4.25} with \eqref{4.35}
immediately. \par Therefore, we complete the proof of the upper
bounds on decay rates stated in Theorem \ref{1.1}.
\end{proof}
\subsection{Proof of lower bounds on decay rates}
In this subsection, we shall prove the lower bounds on decay rates
in \eqref{1.23}--\eqref{1.26}, and thus complete the proof of
Theorem \ref{1.1}.
\begin{Theorem}\label{5mainth} Assume that the  hypotheses of Theorem \ref{1mainth}
and \eqref{1.22} are in force. Then there is a positive constant
$c_1$ independent of $t$ such that for any large enough $t$, it
holds that
\begin{equation}\begin{split}\label{4.36}&\min\left\{\|\nabla^k(\rho^+-\bar{\rho}^+,\rho^--\bar{\rho}^- )(t)\|_{L^2}\right\}\\
&\quad\quad\geq c_1(1+t)^{-\frac{3}{4}-\frac{k}{2}},
\end{split}\end{equation}
for $0\leq k\leq \ell$,
\begin{equation}\begin{split}\label{4.37}&\min\left\{
\|\nabla^k(u^+, u^-)(t)\|_{L^2}+\|\nabla^k(\beta^+n^++\beta^-n^-)(t)\|_{L^2}\right\}\\
&\quad\quad\geq c_1(1+t)^{-\frac{3}{4}-\frac{k}{2}},
\end{split}\end{equation}
for $0\leq k\leq \ell$, and
\begin{equation}\label{4.38}\min\left\{\|\nabla^k(n^+, n^-)(t)\|_{L^2}\right\}\geq
c_1
(1+t)^{-\frac{1}{4}-\frac{k}{2}},
\end{equation}
for $0\leq k\leq \ell+1$.
\end{Theorem}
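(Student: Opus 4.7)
\emph{Strategy.} The plan is to combine the linear lower bounds of Proposition \ref{Proposition2.5} with Duhamel's principle and the upper bounds \eqref{1.18}--\eqref{1.21} already established in the previous subsection. Writing the solution as $U(t) = e^{t\mathcal{B}}U_0 + \int_0^t e^{(t-\tau)\mathcal{B}}\mathcal{F}(\tau)\,\mathrm{d}\tau$ with $U = (n^+,u^+,n^-,u^-)^t$ and $\mathcal{F} = (F_1,F_2,F_3,F_4)^t$, the hypothesis \eqref{1.22} places the initial data exactly in the framework of Proposition \ref{Proposition2.5} with $c_0 = K_0^\vartheta$, so that the linear evolution $e^{t\mathcal{B}}U_0$ satisfies componentwise lower bounds of the form $CK_0^\vartheta(1+t)^{-3/4}$ on the $u^\pm$ and $\beta^+ n^+ + \beta^- n^-$ slots, and $CK_0^\vartheta(1+t)^{-1/4}$ on the $n^\pm$ slots. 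Since $\vartheta<2$ and $K_0\leq \delta_0$ is small, $K_0^\vartheta\gg K_0^2$, and this is precisely the gap that will separate the linear signal from the nonlinear Duhamel correction.

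\emph{Base case and higher derivatives for $(u^\pm, n^\pm, \beta^+ n^+ + \beta^- n^-)$.} For $k=0$, I would bound the nonlinear Duhamel piece by recycling \eqref{4.13}--\eqref{4.16}, which themselves rest on the clever rewriting \eqref{4.7}--\eqref{4.8} of the strongly coupling terms together with the sharp $L^2$ decay \eqref{4.25} of $\rho^\pm-\bar\rho^\pm$. This yields $\bigl\|\int_0^t e^{(t-\tau)\mathcal{B}}\mathcal{F}^l(\tau)\,\mathrm{d}\tau\bigr\|_{L^2}\lesssim K_0^2(1+t)^{-3/4}$ on the $u^\pm$ and $\beta^+ n^+ + \beta^- n^-$ components, and $\lesssim K_0^2(1+t)^{-1/4}$ on the $n^\pm$ components. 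Subtracting these upper bounds from the linear lower bounds and invoking $\|f\|_{L^2}\geq \|f^l\|_{L^2}$ yields \eqref{4.37} and \eqref{4.38} at $k=0$. For $1\leq k\leq \ell+1$, I would repeat the Plancherel-type computation of \eqref{2.42}--\eqref{2.43} for $\|\nabla^k e^{t\mathcal{B}}U_0^l\|_{L^2}^2$ with the extra weight $|\xi|^{2k}$; after the rescaling $\xi=y/\sqrt{t}$, the dominant contribution scales exactly like $K_0^{2\vartheta}(1+t)^{-3/2-k}$ or $K_0^{2\vartheta}(1+t)^{-1/2-k}$ according to the component, producing the sharp linear lower bounds $CK_0^\vartheta(1+t)^{-3/4-k/2}$ or $CK_0^\vartheta(1+t)^{-1/4-k/2}$. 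The corresponding nonlinear Duhamel contributions are controlled by $K_0^2$ times the same power of $(1+t)$ via the low/high frequency machinery already carried out in \eqref{4.26}--\eqref{4.34}. This proves \eqref{4.37} and \eqref{4.38} for all relevant $k$.

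\emph{Lower bound for $\rho^\pm-\bar\rho^\pm$ and main obstacle.} For \eqref{4.36}, the heuristic identity $\rho^\pm-\bar\rho^\pm\sim \mathrm{const}\cdot(\beta^+ n^+ + \beta^- n^-)$ mentioned in the introduction becomes a rigorous Taylor expansion at equilibrium: writing $\rho^\pm-\bar\rho^\pm = c_\pm(\beta^+ n^+ + \beta^- n^-) + Q_\pm(n^+,n^-)$ with $c_\pm\neq 0$ and $Q_\pm$ at least quadratic in $(n^+,n^-)$, a standard product estimate shows $\|\nabla^k Q_\pm\|_{L^2}\lesssim K_0(1+t)^{-3/4-k/2}$, which is $O(K_0^{1-\vartheta})$-times smaller than the linear lower bound obtained in \eqref{4.37} for $\beta^+ n^+ + \beta^- n^-$ and hence cannot cancel it; this yields \eqref{4.36}. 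The main technical obstacle throughout is to keep the nonlinear Duhamel term bounded at the \emph{sharp} rate $(1+t)^{-3/4-k/2}$ without logarithmic loss, since a naive Duhamel estimate applied to the strongly coupling terms $g_\pm\partial_i n^\pm + \bar g_\pm\partial_i n^\mp$ would only give $(1+t)^{-3/4-k/2}\ln(1+t)$, which would swamp the linear signal of size $K_0^\vartheta$. Removing this logarithm is exactly what the rewriting \eqref{4.7}--\eqref{4.8} combined with the sharp estimate \eqref{4.25} on $\rho^\pm-\bar\rho^\pm$ was designed to accomplish in the upper-bound step, and it is indispensable here as well.
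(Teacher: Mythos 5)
Your $k=0$ step is essentially the paper's: Duhamel's principle, the linear lower bound from Proposition \ref{Proposition2.3}/\ref{Proposition2.5} (with $c_0=K_0^\vartheta$ supplied by \eqref{1.22}), and the $O(K_0^2)$ Duhamel correction controlled via the rewriting \eqref{4.7}--\eqref{4.8}. Where you diverge is at $k\geq 1$. You propose to redo the Plancherel computation of \eqref{2.42}--\eqref{2.43} with an extra $|\xi|^{2k}$ weight and to bound the Duhamel term at every derivative level via the machinery of \eqref{4.26}--\eqref{4.34}. The paper instead proves a single new $\dot{H}^{-1}$ upper bound, $\|\Lambda^{-1}(u^{+},u^{-},\beta^+n^{+}+\beta^-n^{-})(t)\|_{L^2}\lesssim K_0(1+t)^{-1/4}$, and then feeds it together with the $k=0$ lower bound into the interpolation inequality $\|f\|_{L^2}\leq C\|\Lambda^{-1}f\|_{L^2}^{k/(k+1)}\|\nabla^kf\|_{L^2}^{1/(k+1)}$, which yields all of \eqref{4.37}--\eqref{4.38} at once. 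Your direct Fourier route is viable in principle, but you would have to verify that the oscillatory cross terms in \eqref{2.42} remain subdominant after inserting $|\xi|^{2k}$; the interpolation trick is substantially cheaper, one scalar inequality replacing a fresh spectral estimate at every $k$.

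For \eqref{4.36} the paper again proceeds differently: it lower-bounds $\|P-\bar{P}\|_{L^2}\gtrsim\|\Lambda^{-1}\nabla P\|_{L^2}$ via the Riesz potential and the pressure identity \eqref{4.9}, gets $\|(\rho^{\pm}-\bar{\rho}^{\pm})\|_{L^2}\gtrsim K_0(1+t)^{-3/4}$ and $\|\nabla(\rho^{\pm}-\bar{\rho}^{\pm})\|_{L^2}\gtrsim K_0(1+t)^{-5/4}$, then interpolates $\|\nabla^kf\|_{L^2}\geq C\|\nabla f\|_{L^2}^k\|f\|_{L^2}^{-(k-1)}$ for $k\geq 2$. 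Your Taylor-expansion alternative $\rho^{\pm}-\bar{\rho}^{\pm}=c_\pm(\beta^+n^++\beta^-n^-)+Q_\pm$ with $c_\pm\neq 0$ is a legitimate different route (it is precisely the Mean Value Theorem remark in the introduction), but your remainder estimate is wrong. Since $Q_\pm$ is genuinely quadratic, the product rule together with \eqref{1.21} gives $\|\nabla^kQ_\pm\|_{L^2}\lesssim\|(n^+,n^-)\|_{L^\infty}\|\nabla^k(n^+,n^-)\|_{L^2}\lesssim K_0^2(1+t)^{-5/4-k/2}$, not the $K_0(1+t)^{-3/4-k/2}$ you wrote. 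This matters: with your bound the comparison ratio $O(K_0^{1-\vartheta})$ against the linear signal $K_0^\vartheta(1+t)^{-3/4-k/2}$ fails to be small whenever $\vartheta\geq1$, which the hypothesis $\vartheta<2$ permits. The sharp bound saves you twice over: $K_0^{2-\vartheta}$ is small because $\vartheta<2$, and the extra factor $(1+t)^{-1/2}$ makes $Q_\pm$ asymptotically negligible for large $t$, which is exactly the regime the theorem asserts. With that correction your argument for \eqref{4.36} closes.
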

\begin{proof}If $t$ is large enough, it follows from \eqref{4.5}, Proposition \ref{2.3},
\eqref{1.18}--\eqref{1.21}, Lemma \ref{lh2} that
\begin{equation}\begin{split}\nonumber&\left\|\Lambda^{-1}(u^{+}, u^{-}, \beta^+n^{+}+\beta^-n^{-})(t)\right\|_{L^2}
\\ \le ~&\left\|\Lambda^{-1}(u^{+,1}, u^{-,l}, \beta^+n^{+,l}+\beta^-n^{-,l})(t)\right\|_{L^2}
+\left\|\Lambda^{-1}(u^{+,h}, u^{-,h},
\beta^+n^{+,h}+\beta^-n^{-,h})(t)\right\|_{L^2}\\ \le
~&CK_0(1+t)^{-\frac{1}{4}}+\int_0^t(1+t-\tau)^{-\frac{1}{4}}\left\|\mathcal
(F^1,
F^3)(\tau)\right\|_{L^1}\mathrm{d}\tau+\int_0^t(1+t-\tau)^{-\frac{3}{4}}
\|(G_2,
 G_4)(\tau)\|_{L^2}\mathrm{d}\tau\\
 &+C\int_0^t(1+t-\tau)^{-\frac{1}{4}} \|(F_2-\text{div} G_2,
 F_4-\text{div} G_4)(\tau)\|_{L^2}\mathrm{d}\tau+C\left\|(u^{+}, u^{-}, \beta^+n^{+}+\beta^-n^{-})(t)\right\|_{L^2}
\\ \le ~&CK_0\left((1+t)^{-\frac{1}{4}}+\int_0^t(1+t-\tau)^{-\frac{1}{4}}(1+\tau)^{-\frac{3}{2}}\mathrm{d}
\tau+\int_0^t(1+t-\tau)^{-\frac{3}{4}}(1+\tau)^{-1}\mathrm{d}\tau\right)\\
 \le ~&CK_0(1+t)^{-\frac{1}{4}},
\end{split}\end{equation}
and
\begin{equation}\begin{split}\nonumber&\min
\left\|(u^{+}, u^{-}, \beta^+n^{+}+\beta^-n^{-})(t)\right\|_{L^2}\\
\ge &~\min\left\|(u^{+,1}, u^{-,l},
\beta^+n^{+,l}+\beta^-n^{-,l})(t)\right\|_{L^2}
\\ \ge&~C_1K_0^\vartheta(1+t)^{-\frac{3}{4}}-\int_0^t(1+t-\tau)^{-\frac{1}{4}}\left\|\mathcal
(F^1,
F^3)(\tau)\right\|_{L^1}\mathrm{d}\tau-\int_0^t(1+t-\tau)^{-\frac{3}{4}}
\|(G_2,
 G_4)(\tau)\|_{L^2}\mathrm{d}\tau\\
 &-C\int_0^t(1+t-\tau)^{-\frac{1}{4}} \|(F_2-\text{div} G_2,
 F_4-\text{div} G_4)(\tau)\|_{L^2}\mathrm{d}\tau\\
\ge&~C_1K_0^\vartheta(1+t)^{-\frac{3}{4}}-CK_0^2(1+t)^{-\frac{3}{4}}\\
\ge&~c_2(1+t)^{-\frac{3}{4}},
\end{split}\end{equation}
since $\vartheta<2$ and $K_0$ is sufficiently small. These together
with the interpolation inequality
\begin{equation}\nonumber\|f\|_{L^2}\leq C\|\Lambda^{-1}f\|_{L^2}^\frac{k}{k+1}\|\nabla^kf\|_{L^2}^\frac{1}{k+1}\end{equation}
imply \eqref{4.37} immediately. Similarly, we can prove
\eqref{4.38}. Here, we omit the details for simplicity. Finally, we
turn to prove \eqref{4.36}. To begin with, by noting \eqref{4.9} and
employing similar arguments in \eqref{4.11}, we have
\begin{align*}
&\left\|P-\bar{P}\right\|_{L^2}\\
&\gtrsim\|\Lambda^{-1}\nabla P\|_{L^2}\\
&\gtrsim \left(\|\mathcal{C}^2(1,1)\big(\bar{\rho}^-
n^++\bar{\rho}^+
n^-\big)\right\|_{L^2}\\
&\quad-\left(\|\mathcal{C}^2\big(\rho^--\bar{\rho}^-\big)\nabla
n^+\right\|_{L^{\frac{6}{5}}}+\left\|\mathcal{C}^2\big(\rho^+-\bar{\rho}^+\big)\nabla
n^-\right\|_{L^{\frac{6}{5}}}\\
&\quad-\left\|\big(\mathcal{C}^2-\mathcal{C}^2(1,1)\big)\big(\bar{\rho}^-\nabla
n^++\bar{\rho}^+\nabla
n^-\big)\right\|_{L^{\frac{6}{5}}}\\
&\gtrsim K_0(1+t)^{1\frac{3}{4}}
-\left\|\big(\rho^--\bar{\rho}^-\big)\right\|_{L^3}\left\|\nabla
n^+\right\|_{L^{2}}+\left\|\big(\rho^+-\bar{\rho}^+\big)\right\|_{L^3}\left\|\nabla
n^-\right\|_{L^{2}}\\
&\quad-\left\|\big(\mathcal{C}^2-\mathcal{C}^2(1,1)\big)\right\|_{L^3}\left\|\bar{\rho}^-
\nabla n^++\bar{\rho}^+\nabla
n^-\right\|_{L^{2}}+\left\|\bar{\rho}^-
n^++\bar{\rho}^+n^-\right\|_{L^2}\\
 &\gtrsim K_0 (1+t)^{-\frac{3}{4}}- K_0^2 (1+t)^{-\frac{5}{4}}\\
 &\gtrsim K_0 (1+t)^{-\frac{3}{4}},
\end{align*}
which leads to
\begin{equation}\label{4.39}
\|(\rho^+-\bar{\rho}^+,\rho^--\bar{\rho}^- )(t)\|_{L^2}\gtrsim K_0
(1+t)^{-\frac{3}{4}}.
  \end{equation}
Similarly, we have
\begin{equation}\label{4.40}
\|\nabla(\rho^+-\bar{\rho}^+,\rho^--\bar{\rho}^- )(t)\|_{L^2}\gtrsim
K_0 (1+t)^{-\frac{5}{4}}.
  \end{equation}
For $2\leq k\leq \ell$, by employing an interpolation technique,
\eqref{1.18} and \eqref{4.40}, we obtain
\begin{align}\begin{split}\label{4.41}
&\|\nabla^k(\rho^+-\bar{\rho}^+,\rho^--\bar{\rho}^-
)(t)\|_{L^2}\\
&\quad\geq C \|\nabla(\rho^+-\bar{\rho}^+,\rho^--\bar{\rho}^-
)(t)\|_{L^2}^k\|(\rho^+-\bar{\rho}^+,\rho^--\bar{\rho}^-
)(t)\|_{L^2}^{-(k-1)}\\
&\quad\geq c_3 (1+t)^{-\frac{3}{4}-\frac{k}{2}}.
 \end{split} \end{align}
Consequently, \eqref{4.36} follows from \eqref{4.39}--\eqref{4.41}
immediately, and thus the proof of Theorem \ref{5mainth} is
completed.\par Therefore, we have completed the proof of Theorem
\ref{1mainth}.
\end{proof}

\bigskip
\appendix

\section{Analytic tools}\label{1section_appendix}
We recall the Sobolev interpolation of the Gagliardo--Nirenberg inequality.
 \begin{Lemma}\label{1interpolation}
 Let $0\le i, j\le k$, then we have
\begin{equation}\nonumber
\norm{\nabla^i f}_{L^p}\lesssim \norm{  \nabla^jf}_{L^q}^{1-a}\norm{ \nabla^k f}_{L^r}^a
\end{equation}
where $a$ satisfies
\begin{equation}\nonumber
\frac{i}{3}-\frac{1}{p}=\left(\frac{j}{3}-\frac{1}{q}\right)(1-a)+\left(\frac{k}{3}-\frac{1}{r}\right)a.
\end{equation}

Especially, while $p=q=r=2$, we have
\begin{equation}\nonumber
 \norm{\nabla^if}_{L^2}\lesssim \norm{\nabla^jf}_{L^2}^\frac{k-i}{k-j}\norm{\nabla^kf}_{L^2}^\frac{i-j}{k-j}.
\end{equation}
\begin{proof}
This is a special case of  \cite[pp. 125, THEOREM]{Nirenberg}.
\end{proof}
\end{Lemma}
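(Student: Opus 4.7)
The plan is to establish the inequality by combining a scaling/dimensional analysis with interpolation of Bessel-potential spaces, handling the $L^2$ case first via Plancherel's theorem and then extending to general $L^p$ via the Hardy--Littlewood--Sobolev inequality.

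First, I would verify that the exponent $a \in [0,1]$ is forced by scaling: replacing $f(x)$ by $f_\lambda(x) := f(\lambda x)$ for $\lambda>0$, each side of the claimed inequality scales like a power of $\lambda$, and demanding equality of those powers for all $\lambda$ gives precisely the stated relation $\tfrac{i}{3}-\tfrac{1}{p} = (1-a)\bigl(\tfrac{j}{3}-\tfrac{1}{q}\bigr)+a\bigl(\tfrac{k}{3}-\tfrac{1}{r}\bigr)$. This both motivates the statement and rules out any other exponent.

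Next, I would handle the special case $p=q=r=2$. Using the Fourier transform and Plancherel's theorem, $\|\nabla^i f\|_{L^2}^2 \simeq \int |\xi|^{2i}|\widehat f(\xi)|^2\,d\xi$, and an application of H\"older's inequality with conjugate exponents $\frac{k-j}{k-i}$ and $\frac{k-j}{i-j}$ to the pointwise factorization $|\xi|^{2i}=|\xi|^{2j\cdot\frac{k-i}{k-j}}|\xi|^{2k\cdot\frac{i-j}{k-j}}$ gives
\begin{equation}\nonumber
\int|\xi|^{2i}|\widehat f|^2\,d\xi \le \Bigl(\int|\xi|^{2j}|\widehat f|^2\Bigr)^{\frac{k-i}{k-j}}\Bigl(\int|\xi|^{2k}|\widehat f|^2\Bigr)^{\frac{i-j}{k-j}},
\end{equation}
which is exactly the $L^2$ interpolation claim after taking square roots.

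For general $(p,q,r)$, my approach is a Littlewood--Paley decomposition $f=\sum_{N\in 2^{\mathbb Z}} f_N$ into dyadic frequency pieces. On each piece Bernstein's inequality gives $\|\nabla^i f_N\|_{L^p}\simeq N^{i+3(\frac12-\frac1p)}\|f_N\|_{L^2}$ and analogous relations with $j,q$ and $k,r$; interpolating the dyadic coefficients by H\"older with weights $(1-a,a)$ and re-summing yields the desired bound, with the Sobolev-type shifts absorbed precisely because of the scaling identity verified in step one. An alternative (and the route I would probably actually take, to avoid getting bogged down in endpoint cases) is to reduce to Nirenberg's original proof by (i) applying the Gagliardo--Nirenberg--Sobolev embedding $\|\nabla^{i} f\|_{L^p}\lesssim \|\nabla^{i+1}f\|_{L^{p^*}}$ to trade derivative order for integrability, and (ii) iterating the one-derivative H\"older interpolation $\|\nabla g\|_{L^p}^2 \lesssim \|g\|_{L^q}\|\nabla^2 g\|_{L^r}$ (proved by integration by parts $\int|\nabla g|^2=-\int g\,\Delta g$ plus H\"older) $k-j$ times in a suitable induction on $k-j$.

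The main obstacle is the general $L^p$ case: the pure H\"older/Plancherel argument only closes cleanly when $p=q=r=2$, and for $p\ne 2$ one must either invoke vector-valued Littlewood--Paley theory (which requires $1<p,q,r<\infty$ and thus forces a separate argument at endpoints) or the induction scheme just described, where the technical difficulty is ensuring that the intermediate integrability indices produced by Sobolev embedding remain in the admissible range. Since the lemma is ultimately quoted from Nirenberg's original paper, in a research-paper write-up I would state these steps briefly and cite \cite{Nirenberg} for the boundary verifications rather than re-deriving them.
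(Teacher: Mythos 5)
Your proposal is correct and, on its central points, more detailed than what the paper actually writes: the paper's entire proof is the single citation to Nirenberg's 1959 theorem, whereas you reconstruct the argument. The two are not really in conflict—you yourself close by noting that the lemma is ultimately quoted from Nirenberg—but it is worth distinguishing what your sketch establishes rigorously from what it only gestures at. The scaling computation correctly identifies the unique admissible exponent $a$, and the $p=q=r=2$ case via Plancherel plus H\"older on the factorization $|\xi|^{2i}=\big(|\xi|^{2j}\big)^{1-a}\big(|\xi|^{2k}\big)^{a}$ is a complete and clean proof of the special case the paper actually uses in its energy estimates. For general $(p,q,r)$, your two suggested routes are both standard, but as written they have small imprecisions: the Bernstein relation $\|\nabla^i f_N\|_{L^p}\simeq N^{i+3(1/2-1/p)}\|f_N\|_{L^2}$ is only a one-sided inequality (and in the stated form needs $p\ge 2$), and the integration-by-parts identity $\int|\nabla g|^2=-\int g\,\Delta g$ gives the one-derivative interpolation only for the $L^2$ left-hand side; for $\|\nabla g\|_{L^p}^p$ with $p\neq 2$ one writes $\int|\nabla g|^{p-2}\nabla g\cdot\nabla g$ and integrates by parts, which produces extra terms that Nirenberg handles with care. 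You acknowledge both issues and defer to \cite{Nirenberg}, which is exactly what the paper does. One further remark worth keeping in mind: as stated, the lemma's hypothesis ``$0\le i,j\le k$'' is looser than what the interpolation actually requires, namely $j\le i\le k$ so that $a=\frac{i-j}{k-j}\in[0,1]$; your derivation implicitly assumes this, and that assumption is necessary.
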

Next, to estimate the $L^p$--norm of the spatial derivatives of the
product of two functions, we shall recall the following estimate:
\begin{Lemma}\label{es-product}
For any integer $k\ge1$, we have
 \begin{equation}\nonumber
  \norm{\nabla ^k(fg)}_{L^p} \lesssim \norm{f}_{L^{p_1}}\norm{\nabla ^kg}_{L^{p_2}} +\norm{\nabla ^kf}_{L^{p_3}}\norm{g}_{L^{p_4}},
 \end{equation}
and
 \begin{equation}\nonumber
  \norm{\nabla ^k(fg)-f\nabla^kg}_{L^p} \lesssim \norm{\nabla f}_{L^{p_1}}\norm{\nabla ^{k-1}g}_{L^{p_2}} +\norm{\nabla ^kf}_{L^{p_3}}\norm{g}_{L^{p_4}},
 \end{equation}
where $p, p_1, p_{2}, p_{3}, p_{4} \in[1, \infty]$ and
$$
\frac{1}{p}=\frac{1}{p_{1}}+\frac{1}{p_{2}}=\frac{1}{p_{3}}+\frac{1}{p_{4}}.
$$
\end{Lemma}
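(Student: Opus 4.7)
The plan is to derive both inequalities from the Leibniz rule together with Hölder's inequality and the Gagliardo--Nirenberg interpolation (Lemma \ref{1interpolation}). First I would expand
\begin{equation}\notag
\nabla^k(fg)=\sum_{j=0}^{k}\binom{k}{j}\nabla^j f\,\nabla^{k-j}g,
\end{equation}
so that the first estimate reduces to controlling each summand in $L^p$. The two endpoint terms $j=0$ and $j=k$ are, by Hölder with exponents $(p_1,p_2)$ and $(p_3,p_4)$ respectively, exactly the two terms on the right-hand side. For the intermediate indices $1\le j\le k-1$ I would apply Hölder with a pair of exponents $(q_1,q_2)$ to $\nabla^j f\,\nabla^{k-j}g$ and then use Lemma \ref{1interpolation} to interpolate $\|\nabla^j f\|_{L^{q_1}}$ between $\|f\|_{L^{p_1}}$ and $\|\nabla^k f\|_{L^{p_3}}$, and likewise $\|\nabla^{k-j}g\|_{L^{q_2}}$ between $\|g\|_{L^{p_4}}$ and $\|\nabla^k g\|_{L^{p_2}}$. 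Young's inequality $ab\lesssim a^{s}+b^{s'}$ with suitable conjugate exponents then collapses the mixed bounds into the desired sum of two endpoint terms.

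For the second (commutator-type) estimate I would simply note that the $j=0$ term is cancelled, so
\begin{equation}\notag
\nabla^k(fg)-f\nabla^k g=\sum_{j=1}^{k}\binom{k}{j}\nabla^j f\,\nabla^{k-j}g.
\end{equation}
The $j=k$ contribution is bounded directly by $\|\nabla^k f\|_{L^{p_3}}\|g\|_{L^{p_4}}$ via Hölder. The $j=1$ term is bounded directly by $\|\nabla f\|_{L^{p_1}}\|\nabla^{k-1}g\|_{L^{p_2}}$. For $2\le j\le k-1$ I would again use Hölder with some intermediate $(q_1,q_2)$ and then interpolate via Lemma \ref{1interpolation}, this time interpolating $\|\nabla^j f\|_{L^{q_1}}$ between $\|\nabla f\|_{L^{p_1}}$ and $\|\nabla^k f\|_{L^{p_3}}$, and $\|\nabla^{k-j}g\|_{L^{q_2}}$ between $\|\nabla^{k-1}g\|_{L^{p_2}}$ and $\|g\|_{L^{p_4}}$. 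The scaling constraint
\begin{equation}\notag
\tfrac{1}{p}=\tfrac{1}{p_1}+\tfrac{1}{p_2}=\tfrac{1}{p_3}+\tfrac{1}{p_4}
\end{equation}
guarantees that a consistent choice of the interpolation parameter $a\in[0,1]$ exists at each intermediate level, after which Young's inequality closes the bound.

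The only real bookkeeping obstacle is verifying, for each intermediate $j$, that one can choose $(q_1,q_2)$ and corresponding interpolation parameters $a_1,a_2\in[0,1]$ so that the Gagliardo--Nirenberg scaling identities of Lemma \ref{1interpolation} hold simultaneously on both factors and are compatible with Hölder's exponent identity $1/p=1/q_1+1/q_2$. This is a linear algebra check; the degrees of freedom match because a single parameter $a$ can be chosen to satisfy both scaling conditions once the Hölder exponent pair is fixed consistently with the given endpoint scalings. The case $p_1,\dots,p_4=\infty$ is handled by replacing the corresponding $L^\infty$ factors with $L^\infty$-interpolation in Lemma \ref{1interpolation}; no boundary issue arises since the Gagliardo--Nirenberg inequality stated there accommodates $p\in[1,\infty]$. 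After these interpolations and an application of Young's inequality, one absorbs the intermediate contributions into the two stated endpoint terms, completing both estimates.
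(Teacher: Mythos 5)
Your argument is sound and self-contained, whereas the paper simply defers to the citation \cite{Kenig} and gives no proof at all. Your route -- Leibniz expansion, Hölder on the endpoints, Gagliardo--Nirenberg interpolation plus Young's inequality on the interior terms -- is in fact the standard way to establish these Moser/commutator product estimates, so you are essentially reconstructing the argument the authors chose not to reproduce. What your write-up buys the reader is a concrete verification that the scaling bookkeeping actually closes: for the first estimate and $1\le j\le k-1$, the natural choice is the interpolation parameters $a=j/k$ on $f$ and $b=(k-j)/k$ on $g$; with the Gagliardo--Nirenberg scaling of Lemma \ref{1interpolation} this forces
\[
\frac{1}{q_1}=\Bigl(1-\tfrac{j}{k}\Bigr)\frac{1}{p_1}+\frac{j}{k}\frac{1}{p_3},
\qquad
\frac{1}{q_2}=\Bigl(1-\tfrac{j}{k}\Bigr)\frac{1}{p_2}+\frac{j}{k}\frac{1}{p_4},
\]
so that $1/q_1+1/q_2=(1-j/k)(1/p)+(j/k)(1/p)=1/p$, exactly the Hölder constraint; Young with conjugate weights $j/k$ and $1-j/k$ then produces the two endpoint products. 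For the commutator estimate and $2\le j\le k-1$, take $a=(j-1)/(k-1)$ on $f$ (interpolating between $\nabla f$ and $\nabla^kf$) and $b=(k-j)/(k-1)$ on $g$ (interpolating between $g$ and $\nabla^{k-1}g$); again $a+b=1$ and $1/q_1+1/q_2=1/p$. So the "linear algebra check" you flagged does indeed work out, and the degrees of freedom match as you anticipated. One small caveat worth noting: the statement as written permits all $p,p_1,\dots,p_4\in[1,\infty]$, but Gagliardo--Nirenberg with both $L^1$ and $L^\infty$ endpoints has known exceptional cases; the paper itself ignores this (Lemma \ref{1interpolation} is quoted without restriction and the proof of the product lemma is a bare citation), so your argument is internally consistent with the paper's conventions, but a careful reader would restrict the endpoint exponents to the range where Lemma \ref{1interpolation} is actually valid.
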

\begin{proof}
 See \cite{Kenig}.
\end{proof}
Finally, the following two lemmas concern the estimate for the
low--frequency part and the high--frequency part of $f$.
\begin{Lemma}\label{lh1} If $f\in L^r(\mathbb R^3)$ for any $2\leq r\leq\infty$, then we have
$$\|f^l\|_{L^r}+\|f^h\|_{L^r}\lesssim \|f\|_{L^r}.$$
\end{Lemma}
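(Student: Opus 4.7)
The plan is to realize both $f^l$ and $f^h$ as convolution operators with $L^1$ kernels and then invoke Young's convolution inequality. Since the cutoff $\phi \in C_0^\infty(\mathbb R^3_\xi)$ is smooth and compactly supported, its inverse Fourier transform $K := \mathfrak{F}^{-1}\phi$ is a Schwartz function on $\mathbb R^3$. In particular all of its weighted $L^\infty$ seminorms are finite, so $|K(x)| \lesssim (1+|x|)^{-N}$ for every $N$, and consequently $K \in L^1(\mathbb R^3)$ with $\|K\|_{L^1} =: C_\phi < \infty$. By the convolution theorem, whenever $f$ has a well-defined Fourier transform one has $f^l = K*f$ in the sense of tempered distributions.

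First I would treat the low-frequency part: Young's convolution inequality with exponents $(1,r,r)$ gives
\begin{equation}\nonumber
\|f^l\|_{L^r} \;=\; \|K*f\|_{L^r} \;\leq\; \|K\|_{L^1}\,\|f\|_{L^r} \;\leq\; C_\phi\,\|f\|_{L^r},
\end{equation}
which is valid for every $r \in [2,\infty]$ (including the endpoint $r=\infty$, where Young still applies). Next I would handle the high-frequency part by writing $f^h = f - f^l$ and applying the triangle inequality together with the bound just obtained:
\begin{equation}\nonumber
\|f^h\|_{L^r} \;\leq\; \|f\|_{L^r} + \|f^l\|_{L^r} \;\leq\; (1+C_\phi)\,\|f\|_{L^r}.
\end{equation}
Summing these two inequalities yields $\|f^l\|_{L^r} + \|f^h\|_{L^r} \lesssim \|f\|_{L^r}$, which is exactly the claim.

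I do not foresee a real obstacle in this argument; the only point that deserves a moment of care is confirming that $K = \mathfrak{F}^{-1}\phi$ lies in $L^1$, but this is immediate from $\phi \in C_0^\infty$ via the Schwartz class and the standard decay estimate recalled above. The constant in the conclusion depends only on $\phi$ (hence on the threshold $\eta$ from Lemma \ref{Lemma2.1}) and is independent of $r \in [2,\infty]$, so the same multiplicative constant works uniformly across the range of exponents stated in the lemma.
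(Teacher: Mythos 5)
Your proof is correct and follows essentially the same route as the paper: Young's convolution inequality with the $L^1$ kernel $\mathfrak{F}^{-1}\phi$ for the low-frequency part, followed by $f^h=f-f^l$ and the triangle inequality for the high-frequency part. The extra detail you supply on why $\mathfrak{F}^{-1}\phi\in L^1$ (Schwartz decay) is a welcome clarification the paper leaves implicit.
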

\begin{proof}
For $2\leq r\leq\infty$, by virtue of Young's inequality for
convolutions, for the low frequency, it holds that
\begin{equation}\label{3.361}\|f^l\|_{L^r}\lesssim \|\mathfrak{F}^{-1}\phi\|_{L^1}\|f\|_{L^r}\lesssim \|f\|_{L^r},\nonumber\end{equation}
and hence
\begin{equation}\label{3.362}\|f^h\|_{L^r}\lesssim \|f\|_{L^r}+\|f^l\|_{L^r}\lesssim \|f\|_{L^r}.\nonumber\end{equation}\end{proof}

\begin{Lemma}\label{lh2} Let $f\in H^k(\mathbb R^3)$ for any integer $k\geq 2$. Then there exists a positive constant $C_0$ such that
\begin{equation}\label{3.363}\|\nabla^j f^h\|_{L^2}\leq C_0 \|\nabla^{j+1}f\|_{L^2},\nonumber\end{equation}
and
\begin{equation}\label{3.364}\|\nabla^{j+1} f^l\|_{L^2}\leq C_0\|\nabla^{j}f\|_{L^2},\nonumber\end{equation}
for any $0\leq j\leq k-1$.
\end{Lemma}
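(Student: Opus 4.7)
The plan is to prove both inequalities by passing to the Fourier side via Plancherel's theorem and exploiting the frequency support of the cutoff $\phi$ defined in the paper. Recall that $\phi$ is supported in $|\xi|\leq \eta$ and equals $1$ on $|\xi|\leq \eta/2$; consequently $1-\phi$ vanishes on $|\xi|\leq \eta/2$ and is bounded by $1$ everywhere. This localization in frequency is exactly what converts an extra derivative into (or out of) the corresponding $L^2$ norm.

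For the high-frequency inequality, I would write
\[
\|\nabla^j f^h\|_{L^2}^2 = \int_{\mathbb{R}^3} |\xi|^{2j}\,|1-\phi(\xi)|^2\,|\widehat{f}(\xi)|^2\,d\xi.
\]
On the support of $1-\phi$ we have $|\xi|\geq \eta/2$, so $|\xi|^{2j}\leq (2/\eta)^2 |\xi|^{2j+2}$. Combined with $|1-\phi|\leq 1$, this gives
\[
\|\nabla^j f^h\|_{L^2}^2 \leq (2/\eta)^2 \int_{\mathbb{R}^3} |\xi|^{2j+2}\,|\widehat{f}(\xi)|^2\,d\xi = (2/\eta)^2\,\|\nabla^{j+1}f\|_{L^2}^2,
\]
which yields the first bound with constant $2/\eta$.

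For the low-frequency inequality I would use the analogous identity
\[
\|\nabla^{j+1} f^l\|_{L^2}^2 = \int_{\mathbb{R}^3} |\xi|^{2j+2}\,|\phi(\xi)|^2\,|\widehat{f}(\xi)|^2\,d\xi,
\]
and on the support of $\phi$ we have $|\xi|\leq \eta$, hence $|\xi|^{2j+2}\leq \eta^2 |\xi|^{2j}$. Since $|\phi|\leq 1$, this gives $\|\nabla^{j+1}f^l\|_{L^2}^2 \leq \eta^2\|\nabla^j f\|_{L^2}^2$. Taking $C_0=\max\{2/\eta,\,\eta\}$ handles both inequalities uniformly for all $0\leq j\leq k-1$, and $f\in H^k$ ensures every integral above is finite. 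There is no genuine obstacle here: the argument is a direct Plancherel calculation using nothing more than the frequency support of $\phi$ and $1-\phi$, so the only care needed is to keep track of the precise thresholds $\eta$ and $\eta/2$ coming from the definition of the cutoff.
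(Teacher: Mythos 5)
Your proof is correct and is precisely the direct Plancherel-plus-frequency-support computation that the paper indicates (and omits) for this lemma. Nothing to add.
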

\begin{proof} This lemma can be shown directly by the definitions of the low--frequency and
high--frequency of $f$ and the Plancherel theorem,  and thus we omit
the details.
\end{proof}

\section*{Acknowledgments}
Yin li's research is partially supported by National Natural Science
Foundation of China $\#$11926354, Natural Science Foundation of
Guangdong Province $\#$2021A1515010292, and Innovative team project
of Guangdong Province $\#$2020KCXTD024. Huaqiao Wang's research is
partially supported by National Natural Science Foundation of China
$\#$ 11901066, Natural Science Foundation of Chongqing $\#$
cstc2019jcyj-msxmX0167 and Project $\#$ 2019CDXYST0015 and $\#$ 2020
CDJQY-A040 supported by the Fundamental Research Funds for the
Central Universities. Guochun Wu's research is partially supported
by National Natural Science Foundation of China $\#$11701193,
$\#$11671086, Natural Science Foundation of Fujian Province $\#$
2018J05005, $\#$2017J01562 and Program for Innovative Research Team
in Science and Technology in Fujian Province University Quanzhou
High-Level Talents Support Plan $\#$2017ZT012. Yinghui Zhang'
research is partially supported by Guangxi Natural Science
Foundation $\#$2019JJG110003, $\#$2019AC20214, and National Natural
Science Foundation of China $\#$11771150.

\bigskip

\end{document}